\DeclareMathOperator{\dist}{dist}
\DeclareMathOperator{\mad}{mad}
\newtheorem{theorem}{Theorem}[section]
\newtheorem{assumption}[theorem]{Assumption}
\newtheorem{question}[theorem]{Question}
\newtheorem{claim}[theorem]{Claim}
\newtheorem{lemma}[theorem]{Lemma}
\theoremstyle{definition}
\newtheorem{definition}[theorem]{Definition}
\def\epsilon{\varepsilon}
\title{Flexible list coloring of graphs with maximum average degree less than $3$}
\author{Richard Bi}
\address{Department of Mathematics, University of Illinois Urbana-Champaign}
\email{rbi3@illinois.edu}
\author{Peter Bradshaw}
\address{Department of Mathematics, University of Illinois Urbana-Champaign}
\email{pb38@illinois.edu}
\thanks{Peter Bradshaw received support from NSF RTG grant DMS-1937241}
\begin{document}
\maketitle
\begin{abstract}
In the \emph{flexible list coloring} problem, we consider a graph $G$ and a color list assignment $L$ on $G$, as well as a subset $U \subseteq V(G)$ for which each $u \in U$ has a preferred color $p(u) \in L(u)$.
Our goal is to find a proper $L$-coloring $\phi$ of $G$ such that $\phi(u) = p(u)$ for at least $\epsilon|U|$ vertices $u \in U$.
We say that $G$ is $\epsilon$-flexibly $k$-choosable
if for every $k$-size list assignment $L$ on $G$ and every subset of vertices with coloring preferences, $G$ has a proper $L$-coloring that satisfies an $\epsilon$ proportion of these coloring preferences.
Dvo\v{r}\'ak, Norin, and Postle [Journal of Graph Theory, 2019] asked whether every $d$-degenerate graph is $\epsilon$-flexibly $(d+1)$-choosable for some constant $\epsilon = \epsilon(d) > 0$.

In this paper, we prove that there exists a constant $\epsilon > 0$ such that every graph with maximum average degree less than $3$ is $\epsilon$-flexibly $3$-choosable, which gives a large class of $2$-degenerate graphs
which are $\epsilon$-flexibly $(d+1)$-choosable. In particular, our results imply a theorem of 
Dvo\v{r}\'ak, Masa\v{r}\'ik, Mus\'ilek, and Pangr\'ac [Journal of Graph Theory, 2020] stating that every planar graph of girth $6$ is $\epsilon$-flexibly $3$-choosable for some constant $\epsilon > 0$.
To prove our result, we generalize the existing reducible subgraph framework traditionally used for flexible list coloring
to allow reducible subgraphs of arbitrarily large order.
\end{abstract}
\section{Introduction}
\subsection{Background: Flexible list coloring}
Given a graph $G$, a \emph{proper coloring} of $G$ is an assignment of a color $\phi(v)$ to each vertex $v \in V(G)$ so that no two adjacent vertices receive the same color.
A graph $G$ is \emph{$k$-colorable} if $G$ has a proper coloring $\phi:V(G) \rightarrow \{1,\dots,k\}$, and such a function $\phi$ is called a \emph{$k$-coloring}.
Dvo\v{r}ak, Norin, and Postle \cite{DNP} observed that by permuting colors,
if $G$ is a $k$-colorable graph and some of the vertices in $G$ have a preferred color in $\{1,\dots,k\}$, then $G$ always has a $k$-coloring
that satisfies a positive proportion (namely $\frac{1}{k}$) of all color preferences. 

Given a graph $G$ and a color list $L(v) \subseteq \mathbb N$ for each vertex $v \in V(G)$,
we say that $G$ is \emph{$L$-colorable} if there exists a proper coloring $\phi:V(G) \rightarrow \mathbb N$ of $G$ such that $\phi(v) \in L(v)$ for each vertex $v \in V(G)$, and we call such a function $\phi$ an \emph{$L$-coloring}.
The \emph{list coloring problem} 
asks whether a given graph $G$ with a list assignment $L:V(G) \rightarrow 2^{\mathbb N}$ has a proper $L$-coloring. Dvo\v{r}\'ak, Norin, and Postle \cite{DNP} asked the following question in the setting of the list coloring problem.
Consider a graph $G$ and a 
list assignment $L:V(G) \rightarrow 2^{\mathbb N}$ for which $G$ is $L$-colorable. Suppose that
for some subset $U \subseteq V(G)$, each $u \in U$ has a preferred color $p(u) \in L(u)$.
Does there exist an $L$-coloring of $G$ that satisfies a given proportion of these coloring preferences?
This question turns out to be highly nontrivial and has led to many interesting questions and results.

If $G$ is a graph and $f:V(G) \rightarrow \mathbb N$ is a function,
 a mapping
$L:V(G) \rightarrow 2^{\mathbb N}$ is an \emph{$f$-assignment} on $G$ if $|L(v)| = f(v)$ for each $v \in V(G)$.
When $f$ is not specified, we say that $L$ is a \emph{list assignment}.
If $f(v) = k$ for all vertices $v \in V(G)$, then  $L$ is a \emph{$k$-assignment} on $G$.
We say that $G$ is \emph{$k$-choosable} if $G$ has an $L$-coloring for every $k$-assignment $L$ on $G$. The \emph{choosability} of $G$ is the least integer $k$ for which $G$ is $k$-choosable.

A \emph{weighted request}
on a graph $G$
with a list assignment $L$
is a function $w$ such that for each vertex $v \in V(G)$ and color $c \in L(v)$, $w$ maps the pair $(v,c)$ to a nonnegative real number $w(v,c)$.
Given a value $\epsilon > 0$, $G$ is \emph{weighted $\epsilon$-flexibly
$k$-choosable} if for every $k$-assignment $L$ and weighted request $w$ on $G$, there exists an $L$-coloring $\phi$ of $G$ such that 
\begin{eqnarray}
\label{eqn:epsilon}
\sum_{v \in V(G)} w(v,\phi(v)) \geq \epsilon \sum_{v \in V(G)} \sum_{c \in L(v)} w(v,c).
\end{eqnarray}
In other words, the weight of the pairs $(v,c)$ for which $\phi(v) = c$ is at least an $\epsilon$ proportion of the weight of all pairs $(v,c)$.

We say that $w$ is an \emph{(unweighted) request} if for each $v \in V(G)$, $w(v,c) = 1$ for at most one color $c \in L(v)$ and $w(v,c') = 0$ for all other colors $c' \in L(v)$.
Given a graph $G$, if there exists a value $\epsilon > 0$ such that the inequality (\ref{eqn:epsilon}) holds for every $k$-assignment $L$ on $G$ and every unweighted request $w$ on $G$, then $G$ is \emph{$\epsilon$-flexibly $k$-choosable}.

With this definition of flexible choosability established, 
the following meta-question arises naturally:
\begin{question}[\cite{DNP}]
\label{q:meta}
    Let $\mathcal G$ be a graph class for which every graph $G \in \mathcal G$ is $k$-choosable. Does there exist a value $\epsilon > 0$ for which every graph $G \in \mathcal G$ is weighted $\epsilon$-flexibly $k$-choosable?
\end{question}
Question \ref{q:meta} has led to a great volume of research and many nontrivial results.
For example, a result proven by Vizing \cite{Vizing} and independently by Erd\H{o}s, Rubin, and Taylor \cite{ERT} states that if $G$ is a connected graph of maximum degree $\Delta$ which is not a clique, then $G$ is $\Delta$-choosable. The second author, along with Masa\v{r}\'ik and Stacho \cite{BMS}, showed that this result also holds in the flexible list coloring setting, proving that such a graph $G$ is weighted $\frac{1}{2 \Delta^4}$-flexibly $\Delta$-choosable and $\frac{1}{6\Delta}$-flexibly $\Delta$-choosable.
The same authors also showed that every graph of treewidth $2$ is not only $3$-choosable, but also $\frac{1}{3}$-flexibly $3$-choosable.
%This question has been studied a lot. Several classes of planar graphs, Brooks theorem,
%$t$-trees,
%outerplanar, so on and so forth.

One natural graph class $\mathcal G$ to which Question \ref{q:meta} can be applied is the class of $d$-degenerate graphs. 
Given a graph $G$, we say that $G$ is \emph{$d$-degenerate} if every nonempty 
subgraph of $G$
has a vertex of degree at most $d$. It is well known that a $d$-degenerate graph $G$ has a linear vertex ordering such that each vertex $v \in V(G)$ has at most $d$ previous neighbors in the ordering. Therefore, a greedy coloring algorithm shows that every $d$-degenerate graph is $(d+1)$-choosable.
This fact
 leads to the following natural question:
    For each value $d \geq 1$, does there exist a value $\epsilon = \epsilon(d) > 0$ such that every $d$-degenerate graph is weighted $\epsilon$-flexibly $(d+1)$-choosable?

Unfortunately, this question seems to be out of reach using current methods.
While $1$-degenerate graphs are weighted $\frac{1}{2}$-flexibly $2$-choosable \cite{DNP},
it is unknown whether there exists a constant $\epsilon > 0$ for which $2$-degenerate graphs are $\epsilon$-flexibly $3$-choosable.
Even proving that a single preference can be satisfied
on a
$2$-degenerate graph
$G$ with a $3$-assignment $L$ is highly nontrivial,
and the only current proof \cite{DNP} of this fact relies on the Combinatorial Nullstellensatz of Alon and Tarsi \cite{AT}.
Due to the difficulty of working with the entire class of $d$-degenerate graphs, research often focuses on the following more specific question:
\begin{question}
\label{q:degen}
    Let $d \geq 1$ be an integer, and $\mathcal G$ be a fixed class of $d$-degenerate graphs. Does there exist a value $\epsilon = \epsilon(d) > 0$ such that every graph $G \in \mathcal G$ is 
    $\epsilon$-flexibly $(d+1)$-choosable?
\end{question}
Question \ref{q:degen}
has an affirmative answer for non-regular connected 
graphs of maximum degree $\Delta$, as these graphs
are $\Delta$-degenerate and
$\frac{1}{6 \Delta}$-flexibly $\Delta$-choosable \cite{BMS}.
The question also has an affirmative answer for triangle-free planar graphs, which are $3$-degenerate and $\epsilon$-flexibly $4$-choosable for some constant $\epsilon > 0$ \cite{DMMP-triangle-free}.
In addition,  the following theorem of Dvo\v{r}\'ak, Masa\v{r}\'ik, Mus\'ilek, and Pangr\'ac answers Question \ref{q:degen}
for the $3$-degenerate class of planar graphs of girth at least $6$:
\begin{theorem}[\cite{DMMP6}]
\label{thm:g6}
    There exists a constant $\epsilon > 0$ such that every planar graph of girth at least $6$ is weighted $\epsilon$-flexibly $3$-choosable.
\end{theorem}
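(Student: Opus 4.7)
The plan is to prove the theorem via the reducible subgraph framework of Dvo\v{r}\'ak, Norin, and Postle. I would argue by induction on $|V(G)|$: in each step, show that every planar graph $G$ of girth at least $6$ contains a bounded-size induced subgraph $H$ that is \emph{reducible} in the following sense. Letting $G' = G - V(H)$, which is again planar of girth at least $6$, for every $3$-assignment $L$ on $V(G)$ and every proper $L$-coloring of $G'$ there is a distribution $\mu$ on extensions to $V(H)$ such that for each $v \in V(H)$ and each $c \in L(v)$, $\Pr_{\phi \sim \mu}[\phi(v) = c] \geq \epsilon_H$ for some fixed $\epsilon_H > 0$. Inductively color $G'$ via a distribution $\mu'$ satisfying the analogous pro-rata bound for $V(G')$, then sample an independent extension from $\mu$; by linearity of expectation the combined distribution assigns each color in $L(v)$ to each $v \in V(G)$ with probability at least $\min(\epsilon,\epsilon_H)$, which is exactly the condition needed to certify weighted $\epsilon$-flexible $3$-choosability.

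To locate such a configuration I would use a discharging argument based on Euler's formula. Since $G$ is planar with girth $\geq 6$, we have $|E(G)| \leq \tfrac{3}{2}(|V(G)| - 2)$ and in particular $\mad(G) < 3$. Assign initial charge $d(v) - 4$ to each vertex $v$ and $|f| - 4$ to each face $f$, producing total charge $-8$. Every face then has charge at least $2$, while low-degree vertices have negative charge. Identify a finite list of candidate reducible configurations (for example, adjacent $2$-vertices, a $3$-vertex incident to two $2$-vertices, or short threads of $2$-vertices with low-degree endpoints) and design discharging rules that push face surplus to low-degree vertices so that in the absence of \emph{every} candidate configuration, every vertex and face finishes with nonnegative charge, contradicting the total of $-8$.

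For each candidate configuration $H$, verification of reducibility is a finite combinatorial check. Fixing an arbitrary proper $L$-coloring of the boundary $N(V(H)) \setminus V(H)$, enumerate the proper $L$-extensions to $V(H)$ and show that for every $v \in V(H)$ and every $c \in L(v)$, a positive fraction of extensions assign $c$ to $v$. Because $|V(H)|$ and the list sizes are bounded, this reduces to showing that at least one valid extension uses $c$ at $v$ for every choice of $(v,c)$ and every boundary coloring, after which the uniform distribution on valid extensions delivers some explicit $\epsilon_H > 0$. Taking $\epsilon$ to be the minimum of $\epsilon_H$ over the (finite) list of configurations yields a constant independent of $G$.

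The main obstacle is the discharging step, which must produce a list of reducible configurations that is simultaneously narrow enough to check exhaustively and broad enough that some member must appear in every planar graph of girth $\geq 6$. In particular, a long maximal path of $2$-vertices (a \emph{$2$-thread}) cannot sit inside any bounded-size reducible subgraph, because its interior colors depend on the full boundary of the thread, not on any bounded neighborhood. The cleanest resolution, matching the framework generalization advertised in the abstract, is to widen the notion of ``reducible subgraph'' to allow subgraphs of unbounded order, such as an entire maximal $2$-thread together with its endpoints, and to verify by an explicit path-coloring argument that each interior vertex still receives each color in its list with uniformly positive probability over random consistent extensions.
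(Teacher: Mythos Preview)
Your reducibility condition as stated cannot be satisfied. You ask that for \emph{every} proper $L$-coloring of $G' = G - V(H)$ there be an extension distribution on $V(H)$ with $\Pr[\phi(v) = c] \geq \epsilon_H$ for each $v \in V(H)$ and each $c \in L(v)$. But if $v \in V(H)$ has a neighbor $u \in V(G')$ and the fixed coloring of $G'$ assigns $u$ the color $c$, then no proper extension colors $v$ with $c$, so the probability is zero. The workable definition---both in the original DMMP6 argument and in this paper's framework---uses the \emph{reduced} lists $L'(v) = L(v) \setminus \{\phi(u) : u \in N(v) \cap V(G')\}$ and demands the lower bound only for $c \in L'(v)$. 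Recovering a uniform bound for all $c \in L(v)$ then requires a complementary (FORB) condition on the inductive distribution over colorings of $G'$, guaranteeing that each fixed color is \emph{avoided} at each boundary vertex with uniformly positive probability. For $k = 3$ one checks that (FIX) implies (FORB) once lists have size at least $2$, but (FORB) must be carried explicitly in the induction hypothesis; your ``sample an independent extension'' step elides exactly this coupling.

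Separately, the paper does not prove Theorem~\ref{thm:g6} by face-based discharging. It establishes the strictly stronger Theorem~\ref{thm:mad3}---every graph with $\mad(G) < 3$ is weighted $\epsilon$-flexibly $3$-choosable---and deduces the girth-$6$ planar case from the standard inequality $\mad(G) < \tfrac{2g}{g-2} = 3$. The discharging uses only vertex charges $\deg(v) - 3$ and moves charge along ``conductive paths'' of degree-$3$ vertices; faces and Euler's formula never appear. Your final paragraph correctly isolates the real obstruction: arbitrarily long chains of low-degree vertices force the reducible-subgraph notion to accommodate subgraphs of unbounded order, and that generalization is precisely the paper's technical contribution. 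Once it is in place, planarity becomes irrelevant, which is why the paper can afford to prove the $\mad < 3$ statement directly.
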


One natural graph class for which to ask Question \ref{q:degen} is the class of graphs with bounded \emph{maximum average degree}, defined as follows. Given a graph $G$, the maximum average degree of $G$, written $\mad(G)$,
is the maximum value $\frac{2|E(H)|}{|V(H)|}$, where the maximum is taken over all nonempty subgraphs $H \subseteq G$.
We note that if $G$ is a graph with maximum average degree less than some integer $k$, then every nonempty subgraph of $G$ has a vertex of degree at most $k-1$, so $G$ is $(k-1)$-degenerate and $k$-choosable. Therefore, it is natural to ask
the following special case of
Question \ref{q:degen}:
\begin{question}
\label{q:mad}
    Let $k \geq 2$ be an integer. Does there exist a value $\epsilon = \epsilon(d) > 0$ such that every graph $G$ satisfying $\mad(G) < k$ is $\epsilon$-flexibly $k$-choosable?
\end{question}
We observe that since a planar graph of girth at least six has maximum average degree less than $3$, a positive answer to Question \ref{q:mad}, even for the special case $k=3$,
would imply Theorem \ref{thm:g6}.
Dvo\v{r}\'ak, Norin, and Postle
\cite[Lemma 12]{DNP} give the following
partial answer to Question  \ref{q:mad}:
\begin{theorem} 
\label{thm:2d3}
\cite{DNP}
If $G$ is a graph with maximum average degree less than $k-1 + \frac{2}{k+2}$, then $G$ is $\epsilon$-flexibly $k$-choosable for some constant $\epsilon = \epsilon(k) > 0$. 
\end{theorem}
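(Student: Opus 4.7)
The plan is to apply the standard reducible subgraph framework for flexible list coloring developed in \cite{DNP}. In this framework, one exhibits a family of local configurations (reducible subgraphs) with the following property: whenever $G$ contains such a configuration $H$, a preference at a designated vertex of $H$ can be satisfied with probability at least $\alpha > 0$ by a suitable randomized extension of any partial $L$-coloring of $G - H$. A standard lemma then converts a positive density of such configurations in $G$ into $\epsilon$-flexible $k$-choosability for some $\epsilon = \epsilon(k) > 0$ depending only on $\alpha$ and the density.

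The first task is to identify suitable reducible configurations for the bound $\mad(G) < k-1 + \frac{2}{k+2}$. The cleanest one is a single vertex $v$ of degree at most $k-2$: for any $L$-coloring of $G - v$, the set of colors available at $v$ has size at least $2$, so a uniformly random available color matches any preferred color at $v$ with probability at least $\frac{1}{k}$. To exploit the full slack allowed by the mad bound, I would also incorporate configurations centered on $(k-1)$-degree vertices whose neighborhoods contain other low-degree vertices; the slack provided by the low-degree neighbors allows a short recoloring argument that makes the preferred color at the designated vertex available with positive probability even when its degree equals $k-1$.

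The main step, and the main obstacle, is a discharging argument showing that $\mad(G) < k-1 + \frac{2}{k+2}$ forces a positive fraction of $V(G)$ to lie in one of these reducible configurations. Starting from initial charges $\mathrm{ch}(v) = \deg(v)$ with total charge less than $(k-1 + \frac{2}{k+2})\,|V(G)|$, I would design discharging rules transferring charge from vertices of degree at least $k$ to nearby vertices of degree at most $k-1$, with the goal that any vertex not lying in a reducible configuration ends with charge at least $k-1 + \frac{2}{k+2}$, yielding a contradiction unless the reducible configurations cover at least a $\delta(k) > 0$ fraction of $V(G)$. The specific quantity $\frac{2}{k+2}$ in the hypothesis is precisely what determines the density of reducible configurations one can extract, so matching the discharging rules to the reducibility guarantees of the chosen configurations is the delicate part.

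Once a positive density of reducible configurations is established, the framework of \cite{DNP} yields the desired constant $\epsilon = \epsilon(k) > 0$ by combining the per-configuration flexibility guarantee $\alpha$ with the density $\delta(k)$, completing the argument.
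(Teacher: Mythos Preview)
The paper does not give its own proof of this theorem; it is cited from \cite{DNP}, and the paper only sketches the argument for $k=3$: the reducible configurations are a single vertex of degree at most $1$ (more generally, at most $k-2$) and a pair of adjacent vertices of degree $2$ (more generally, degree $k-1$), and a discharging argument shows that every induced subgraph of a graph with $\mad(G) < k-1+\frac{2}{k+2}$ contains one of these configurations.

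Your proposal is in the right spirit, but it contains a conceptual misstatement of the framework that would derail the argument. You describe the discharging step as showing that ``a positive fraction of $V(G)$'' lies in reducible configurations, and then speak of combining a ``density $\delta(k)$'' with the per-configuration guarantee to obtain $\epsilon$. That is not how the framework operates. The key lemma (Lemma~\ref{lem:main-k-red} in this paper, and its predecessor in \cite{DNP,DMMP-triangle-free}) requires that \emph{every} nonempty induced subgraph of $G$ contain at least one reducible configuration; the proof then proceeds by induction, peeling off one reducible subgraph at a time and extending a good distribution from $G\setminus S$ to $G$. The discharging argument is therefore used to show the existence of a single reducible configuration in $G$; since the $\mad$ hypothesis is hereditary, the same holds for every induced subgraph. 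No density parameter $\delta(k)$ enters the final bound on $\epsilon$; what matters is the bounded size of the reducible configurations (at most $2$ vertices here), which yields a uniform $\alpha$ in (FIX) and (FORB).

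Your identification of the first configuration (a vertex of degree at most $k-2$) matches the paper's sketch. For the second, the paper's description is more concrete than yours: it is simply a pair of adjacent vertices each of degree $k-1$. With these two configurations fixed, the discharging argument is short: give each vertex charge $\deg(v)$, let each vertex of degree at least $k$ send $\frac{1}{k+2}$ to each neighbor of degree $k-1$, and check that in the absence of either configuration every vertex ends with charge at least $k-1+\frac{2}{k+2}$, contradicting the $\mad$ bound.
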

When $k = 3$, Theorem \ref{thm:2d3} states that every graph with maximum average degree less than $2.4$ is $\epsilon$-flexibly $3$-choosable for some constant $\epsilon > 0$.
\subsection{Background: Reducible subgraph framework}
In order to prove Theorem \ref{thm:2d3},
Dvo\v{r}\'ak, Norin, and Postle \cite{DNP} implicitly use a method involving reducible subgraphs.
In graph theory, a \emph{reducible subgraph} framework is a setting
commonly used to solve graph coloring problems.
In this framework, one argues that for all graphs $G$ in some subgraph-closed class $\mathcal G$,
$G$ 
has a coloring $\phi:V(G) \rightarrow \mathbb N$ satisfying some particular property $P$.
In order to prove this statement, one first considers a counterexample $G \in \mathcal  G$ 
with the minimum number of vertices. Then, one shows that $G$ contains a particular subgraph $H$, and since $G$ is a minimal counterexample, $G \setminus H$ has a coloring satisfying $P$. Finally, one argues that the coloring on $G \setminus H$ can be extended to a coloring on $G$ satisfying $P$, which contradicts the assumption that $G$ is a counterexample and thereby proves the statement. A subgraph $H$ which allows such an argument is called a \emph{reducible subgraph}, and to prove that every graph in $\mathcal G$ has a coloring satisfying $P$, 
it suffices to prove that every graph in $\mathcal G$ has a reducible subgraph. 
Perhaps the most famous example of a reducible subgraph framework is the one used to prove the Four Color Theorem \cite{AH, 4CT}. Similar frameworks have been frequently used to prove upper bounds on other graph coloring parameters, such as acyclic chromatic number \cite{AMS},
injective chromatic number \cite{CKY},
and
injective chromatic index \cite{KRX}.

Dvo\v{r}\'ak, Masa\v{r}\'ik, Mus\'ilek, and Pangr\'ac \cite{DMMP-triangle-free}
explicitly define and develop the reducible subgraph framework for flexible list coloring which implicitly appears in \cite{DNP}.
In the framework of this method, an induced subgraph $H$ of a graph $G$ with a $k$-assignment $L$ is reducible if $H$ can 
always be $L$-colored even after all vertices of $G \setminus H$ are colored, and if certain additional properties are satisfied. Dvo\v{r}\'ak, Masa\v{r}\'ik, Mus\'ilek, and Pangr\'ac \cite{DMMP-triangle-free}
prove that if every induced subgraph of $G$ has a reducible subgraph $H$ with at most $b$ vertices, then there exists a constant $\epsilon = \epsilon(k,b) > 0$ such that $G$ is $\epsilon$-flexibly $k$-choosable.

The proof of Theorem \ref{thm:2d3}
in the special case $k = 3$
essentially
argues that 
a vertex of degree $1$ is a reducible subgraph, as is a pair of adjacent vertices of degree $2$.
Then, given a graph $G$ with maximum average degree less than $k -1 + \frac{2}{k+2} = 2.4$
and a $3$-assignment $L$ on $G$,
a discharging argument shows that every induced subgraph of $G$ contains one of these reducible subgraphs, implying the result.

\subsection{Our results}
The first goal of this paper is to introduce a framework of reducible subgraphs that generalizes the framework of Dvo\v{r}\'ak, Masa\v{r}\'ik, Mus\'ilek, and Pangr\'ac \cite{DMMP-triangle-free}. 
In the framework of \cite{DMMP-triangle-free}, in order to prove that a graph $G$ is weighted $\epsilon$-flexibly $k$-choosable,
one must prove that every induced subgraph of $G$ has a reducible subgraph on boundedly many vertices.
In our generalized framework, however, we
allow reducible subgraphs on arbitrarily many vertices.
While modifications of the framework from \cite{DMMP-triangle-free} are common
when considering restricted graph classes such as graphs of large girth \cite{DMMP6} or graphs with cycle restrictions \cite{YangYang},
our framework is the first to allow arbitrarily large reducible subgraphs.
We will see that this modification is powerful and 
allows us to make structural arguments which are incompatible with previous frameworks.

The second goal of this paper is to prove the following result, which gives a partial answer to Question \ref{q:mad} and strengthens Theorem \ref{thm:g6}.

\begin{theorem}
    \label{thm:mad3}
    If $G$ is a graph with maximum average degree less than $3$, then $G$ is 
    weighted 
    $2^{-30}$-flexibly $3$-choosable.
\end{theorem}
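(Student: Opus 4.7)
My plan is to reduce Theorem \ref{thm:mad3} to a structural statement about graphs with $\mad(G) < 3$, using a generalized reducible subgraph framework in the spirit of \cite{DMMP-triangle-free} but accommodating reducible subgraphs of arbitrary size.

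First, I would set up the framework. Define an induced subgraph $H \subseteq G$ with a $3$-list assignment $L$ to be \emph{reducible} if, for every proper $L$-coloring $\psi$ of $G - V(H)$, the residual list assignment $L'$ on $H$ obtained by deleting already-used neighbor colors (i) admits a proper $L'$-coloring, and (ii) admits a probability distribution $\mathcal{D}_H$ over proper $L'$-colorings such that for at least $\gamma|V(H)|$ pairs $(v,c_v)$ with $v \in V(H)$ and $c_v \in L'(v)$, one has $\Pr_{\mathcal{D}_H}[\phi(v) = c_v] \geq \gamma$, where $\gamma > 0$ is an absolute constant depending only on the \emph{type} of the reducible subgraph. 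The crucial departure from \cite{DMMP-triangle-free} is that the number of "flexible pairs" scales with $|V(H)|$ rather than being a fixed constant; this is what lets $|V(H)|$ be arbitrary. I would then prove a reduction lemma: if every induced subgraph of $G$ contains a reducible subgraph drawn from a finite list of types with a common $\gamma$, then $G$ is weighted $\epsilon$-flexibly $3$-choosable with $\epsilon$ polynomial in $\gamma$. The proof is a minimum counterexample argument fused with the Dvo\v{r}\'ak--Norin--Postle sampling trick: each weighted request is "promoted" to a hard constraint with probability $p$, a minimum counterexample argument colors $G - V(H)$, and then $\mathcal{D}_H$ is used inside $H$; the expected weight of satisfied requests is then a positive constant proportion of the total request weight.

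Next, I would identify a short list of reducible configurations sufficient for the class $\mad(G) < 3$: \textbf{(R1)} a vertex of degree at most $1$; \textbf{(R2)} two adjacent vertices of degree $2$; \textbf{(R3)} a connected component all of whose vertices have degree $2$ (i.e., an isolated cycle of $2$-vertices); and \textbf{(R4)} a \emph{thread}, i.e., a maximal path $v_0 v_1 \dots v_k v_{k+1}$ with $v_0, v_{k+1}$ of degree at least $3$ and $v_1,\dots,v_k$ of degree exactly $2$, whenever $k \geq k_0$ for an absolute constant $k_0$. The reducibility of (R1), (R2), (R3) is essentially standard; the serious work is (R4). To show that every graph $G$ with $\mad(G) < 3$ contains one of these, I would run a discharging argument on $\mu(v) = d(v) - 3$, which has negative total sum. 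Assuming no (R1)--(R4) appears, all degree-$2$ vertices lie on threads of length at most $k_0 - 1$ with endpoints of degree $\geq 3$; suppressing the degree-$2$ vertices produces a multigraph $G'$ of minimum degree $\geq 3$ in which every edge corresponds to a thread of bounded length. The identity $2|E(G')| - 3|V(G')| = (2|E(G)| - 3|V(G)|) + (\text{number of }2\text{-vertices of }G)$, combined with the bound on thread lengths and the inequality $\mad(G) < 3$ applied to a suitably dense subgraph, contradicts $\sum_v \mu(v) < 0$.

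The main obstacle will be establishing reducibility of (R4). For a thread $v_0 v_1 \dots v_k v_{k+1}$ of length $k \geq k_0$, once the endpoints are colored each internal $v_i$ has a residual list $L'(v_i)$ of size at least $2$ (exactly $3$ except possibly at $v_1, v_k$). I need to exhibit a distribution over proper $L'$-colorings of the path $v_1 \dots v_k$ under which at least $\gamma k$ of the $v_i$ satisfy any prescribed request with probability at least $\gamma$. The natural tool is a transfer-matrix analysis: proper $L'$-colorings of a path with lists of size $2$ or $3$ can be encoded as walks in a bounded-size auxiliary digraph on state set $\bigcup_v L'(v)$, and the mixing properties of the resulting (ergodic) Markov chain guarantee that, away from the endpoints, the marginal distribution of $\phi(v_i)$ on every color of $L'(v_i)$ is bounded below by a universal constant. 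The technical care lies in controlling "frozen" substructures where the transfer matrix is nearly degenerate (e.g., long stretches of $2$-element lists forcing an alternation), which requires splitting the thread into blocks and treating exceptional blocks separately. This step is precisely where the arbitrary-size framework is essential: no fixed finite $|V(H)|$ can cover threads of all lengths simultaneously. Once (R4) is reducible with an explicit $\gamma$, propagating constants through the reduction lemma yields $\epsilon \geq 2^{-30}$.
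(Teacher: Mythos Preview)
Your set of reducible configurations is not sufficient, and the discharging step does not close. Note first that (R4) is redundant: a thread with $k \geq 2$ internal degree-$2$ vertices contains two adjacent degree-$2$ vertices and is already an instance of (R2). Now take any connected $3$-regular graph $H$ and subdivide every edge once. The resulting $G$ has maximum degree $3$ and some degree-$2$ vertices, so $\mad(G) < 3$; it has minimum degree $2$, no two adjacent degree-$2$ vertices, and is not a cycle, so it avoids (R1)--(R4) entirely. Your suppressed multigraph $G'$ is just $H$ again, with $2|E(G')| - 3|V(G')| = 0$, and nothing in your identity contradicts $\sum_v(\deg(v)-3) < 0$; nor can the hypothesis $\mad(G) < 3$ be applied to $G'$, which is a minor of $G$ rather than a subgraph. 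Separately, your reducibility condition---that only $\gamma|V(H)|$ pairs $(v,c)$ need satisfy $\Pr[\phi(v)=c]\geq\gamma$---is too weak for the reduction lemma you claim, since an adversarial weighted request can place all its mass on the non-flexible pairs.

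The paper confronts exactly the obstruction above. Its central notion is a \emph{conductive path}, a path whose \emph{internal} vertices have degree $3$ in $G$ (not degree $2$). The pivotal reducible configuration is a conductive path of arbitrary length joining two degree-$2$ vertices; the substantial supporting result (Lemma~\ref{lem:max3}) shows that every $2$-connected graph of maximum degree $3$, apart from four small exceptions, is $(f,3,3^{-8})$-reductive, and this forces every degree-$2$ vertex to be conductively connected through degree-$3$ vertices to some vertex of degree $\geq 4$. Discharging then routes charge from high-degree vertices to degree-$2$ vertices along conductive paths, with a cheap/expensive dichotomy according to how many high-degree vertices a given degree-$2$ vertex can reach. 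This machinery, which is absent from your outline, is precisely what handles graphs such as the subdivided $H$.
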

Since $K_4$ is a graph of maximum average degree exactly $3$ which is not $3$-choosable, Theorem \ref{thm:mad3} is best possible. Furthermore,
as observed above,
every planar graph of girth at least $6$
has maximum average degree less than $3$, so Theorem \ref{thm:mad3} implies Theorem \ref{thm:g6}
and is in fact a stronger statement.

The paper is organized as follows. In Section \ref{sec:framework}, we introduce our new reducible subgraph framework. In Section \ref{sec:reducible}, we establish a variety of tools for identifying reducible subgraphs. The proofs of the results in Section \ref{sec:reducible} are often rather tedious, and an impatient reader can skip the proofs of Section \ref{sec:reducible} without missing the main ideas of the paper. In Section \ref{sec:discharging}, we use a discharging argument to prove Theorem \ref{thm:mad3}.

\section{A generalized reducible subgraph framework}
\label{sec:framework}
In this section, we introduce a generalized version of the reducible subgraph framework developed by 
Dvo\v{r}\'ak, Masa\v{r}\'ik, Mus\'ilek, and Pangr\'ac \cite{DMMP6}.
We rely heavily on this generalized framework to prove Theorem \ref{thm:mad3}.
The key development of our framework is that we allow reducible subgraphs to be arbitrarily large, whereas the previous framework required reducible subgraphs to have boundedly many vertices.

We first state a lemma which gives a sufficient condition for weighted flexibility and is key to our framework.
A straightforward argument involving expected value
proves the lemma.
\begin{lemma} \label{lem:prob} \cite{DNP}
    Let $G$ be a graph, and let $k$ be a positive integer. Suppose that for every $k$-assignment $L$ on $G$, there exists a probability distribution on $L$-colorings $\phi$ of $G$ such that for each vertex $v \in V(G)$ and color $c \in L(v)$, 
    \[\Pr(\phi(v) = c) \geq \epsilon.\]
    Then, 
    $G$ is weighted $\epsilon$-flexibly $k$-choosable.
\end{lemma}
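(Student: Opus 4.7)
The plan is to prove the lemma via a straightforward application of linearity of expectation, exactly the ``probabilistic method'' argument the paper alludes to. Fix a $k$-assignment $L$ and an arbitrary weighted request $w$ on $G$. By hypothesis, there exists a probability distribution $\mathcal{D}$ on proper $L$-colorings of $G$ such that for every $v \in V(G)$ and every $c \in L(v)$, a random coloring $\phi \sim \mathcal{D}$ satisfies $\Pr(\phi(v) = c) \geq \epsilon$.

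Next, I would compute the expected total weight satisfied by a random $L$-coloring sampled from $\mathcal{D}$. By linearity of expectation, and then rewriting the per-vertex expectation as a sum over colors in $L(v)$,
\[
\mathbb{E}\!\left[\sum_{v \in V(G)} w(v, \phi(v))\right]
= \sum_{v \in V(G)} \mathbb{E}[w(v, \phi(v))]
= \sum_{v \in V(G)} \sum_{c \in L(v)} w(v,c)\,\Pr(\phi(v) = c).
\]
Since $w(v,c) \geq 0$ and $\Pr(\phi(v) = c) \geq \epsilon$ for every pair $(v,c)$, this lower bounds the expectation by
\[
\epsilon \sum_{v \in V(G)} \sum_{c \in L(v)} w(v,c).
\]

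Finally, since the expectation of $\sum_v w(v, \phi(v))$ over the distribution $\mathcal{D}$ is at least the right-hand side of inequality \eqref{eqn:epsilon}, there must exist at least one $L$-coloring $\phi^*$ in the support of $\mathcal{D}$ for which the realized sum $\sum_v w(v, \phi^*(v))$ also meets this bound. This $\phi^*$ is a proper $L$-coloring witnessing the weighted $\epsilon$-flexibility requirement for $(L,w)$. Since $L$ and $w$ were arbitrary, $G$ is weighted $\epsilon$-flexibly $k$-choosable. There is no genuine obstacle here --- the only thing to be careful about is that the nonnegativity of $w$ is what allows us to replace the pointwise bound $\Pr(\phi(v)=c) \geq \epsilon$ with the desired lower bound on the weighted sum; without nonnegativity the direction of the inequality would be in jeopardy, but the definition of weighted request in the paper explicitly guarantees $w(v,c) \geq 0$.
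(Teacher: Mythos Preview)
Your proof is correct and is precisely the ``straightforward argument involving expected value'' that the paper alludes to in lieu of a written proof; linearity of expectation applied to $\sum_v w(v,\phi(v))$, combined with the nonnegativity of $w$ and the pointwise bound $\Pr(\phi(v)=c)\ge\epsilon$, yields a coloring in the support meeting~\eqref{eqn:epsilon}. There is nothing to add.
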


Next, we establish a definition which is central to our framework. 
Let $G$ be a graph, and let $L$ be a list assignment on $G$. Given values $0 < \epsilon \leq \alpha$, we say that a
distribution on $L$-colorings $\phi$ of $G$ is a $(k,\epsilon,\alpha)$-distribution if the following hold:
\begin{itemize}
    \item For each vertex $v \in V(G)$ and color $c \in L(v)$, $ \Pr(\phi(v) = c) \geq \epsilon$;
    \item For each subset $U \subseteq V(G)$ of at most $k-2$ vertices, the probability that $\phi(u) \neq c$ for all $u \in U$ is at least $\alpha^{|U|}$.
\end{itemize}
In particular, if $L$ admits a $(k,\epsilon,\alpha)$-distribution on $G$, then each color $c \in L(v)$ has a probability of at least $\epsilon$ of being used to color $v$ and a probability of at least $\alpha$ of being avoided at $v$. 
%Furthermore, if every $k$-assignment $L$ on $G$ admits a $(k,\epsilon,\alpha)$-distribution, then Lemma \ref{lem:prob} implies that $G$ is $\epsilon$-flexibly $k$-choosable. 
The notion of a $(k,\epsilon,\alpha)$-distribution appears implicitly in the reducibility framework of Dvo\v{r}\'ak, Masa\v{r}\'ik, Mus\'ilek, and Pangr\'ac \cite{DMMP6}.
\begin{definition}
    Let $G$ be a graph, and let $H$ be an induced subgraph of $G$. Let $k \geq 2$ be an integer. We say that $H$ is a \emph{$(k,\epsilon,\alpha)$-reducible} subgraph of $G$ if there exists a nonempty vertex subset $S \subseteq V(H)$ such that the following holds for every $k$-assignment $L$ on $G$: If there exists a $(k,\epsilon,\alpha)$-distribution on $L$-colorings of $G \setminus S$, then there exists a $(k,\epsilon,\alpha)$-distribution on $L$-colorings of $G$. We say that such a set $S$ is a \emph{reduction set} of $H$. 
\end{definition}

%We establish some examples of $3$-reducible subgraphs.

The following lemma, which resembles \cite[Lemma 3]{DMMP6}, shows that in order to show that a graph is flexibly choosable, it is enough to show that every induced subgraph contains a reducible subgraph.

\begin{lemma}
\label{lem:main-k-red}
    Let $G$ be a graph. If for every $Z \subseteq V(G)$, the graph $G[Z]$ contains a $(k,\epsilon,\alpha)$-reducible subgraph, then $G$ is weighted $\epsilon$-flexibly $k$-choosable. 
\end{lemma}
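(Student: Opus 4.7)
The plan is to strengthen the conclusion slightly and prove by induction on $|V(G)|$ that under the stated hypothesis, for every $k$-assignment $L$ on $G$ there exists a $(k,\epsilon,\alpha)$-distribution on $L$-colorings of $G$. Since a $(k,\epsilon,\alpha)$-distribution in particular satisfies $\Pr(\phi(v) = c) \geq \epsilon$ for every $v \in V(G)$ and every $c \in L(v)$, Lemma \ref{lem:prob} will then immediately yield that $G$ is weighted $\epsilon$-flexibly $k$-choosable.

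The base case $V(G) = \emptyset$ is trivial: the unique empty $L$-coloring gives a $(k,\epsilon,\alpha)$-distribution since both defining conditions hold vacuously. For the inductive step, I would apply the hypothesis with $Z = V(G)$ to obtain a $(k,\epsilon,\alpha)$-reducible induced subgraph $H$ of $G$ together with a nonempty reduction set $S \subseteq V(H)$. Because $|S| \geq 1$, the graph $G \setminus S$ has strictly fewer vertices than $G$, so I would like to invoke the inductive hypothesis on $G \setminus S$ with the restriction of $L$. To do so, I must verify that the hypothesis of the lemma is inherited by $G \setminus S$: namely, that every induced subgraph of $G \setminus S$ contains a $(k,\epsilon,\alpha)$-reducible subgraph. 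This is immediate, since for any $Z' \subseteq V(G) \setminus S$ the graphs $(G \setminus S)[Z']$ and $G[Z']$ coincide, and by the assumption applied to $G$ the latter contains a reducible subgraph. Induction then yields a $(k,\epsilon,\alpha)$-distribution on $L$-colorings of $G \setminus S$, and the defining property of the reducible pair $(H,S)$ extends this to a $(k,\epsilon,\alpha)$-distribution on $L$-colorings of $G$, completing the induction.

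The argument is essentially bookkeeping, and I do not anticipate a genuine obstacle. The only point requiring care is to recognize that $(k,\epsilon,\alpha)$-reducibility is defined with respect to the ambient graph in which one is building the distribution, so one must check that passing from $G$ to $G \setminus S$ preserves the hypothesis of the lemma. This is transparent once one observes that the induced subgraphs of $G \setminus S$ are precisely the induced subgraphs of $G$ that avoid $S$, so no new reducibility question arises at the smaller scale.
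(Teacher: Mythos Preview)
Your proposal is correct and follows essentially the same approach as the paper: both strengthen the conclusion to the existence of a $(k,\epsilon,\alpha)$-distribution, induct on $|V(G)|$ with the empty graph as base case, apply the hypothesis with $Z=V(G)$ to obtain a reducible $H$ with reduction set $S$, and then use induction on $G\setminus S$ together with the defining property of reducibility. Your explicit verification that the hypothesis is inherited by $G\setminus S$ via $(G\setminus S)[Z']=G[Z']$ matches the paper's one-line observation to the same effect.
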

\begin{proof}
    We prove the stronger claim that if for every nonempty $Z \subseteq V(G)$, the graph $G[Z]$ contains a $(k,\epsilon,\alpha)$-reducible subgraph, then $G$ has a $(k,\epsilon,\alpha)$-distribution for every $k$-assignment on $G$. Then, the lemma's conclusion follows from Lemma \ref{lem:prob}.

    For our base case, if $|V(G)| = 0$, then the 
    distribution which assigns the empty coloring to $G$ with probability $1$ is vacuously a $(k,\epsilon,\alpha)$-distribution.
    Hence, we assume that $|V(G)| \geq 1$.
    Let $L$ be a $k$-assignment on $G$.
    By our assumption, there exists an induced subgraph $H$ of $G$ which is $(k,\epsilon,\alpha)$-reducible. 
    
    Let $S \subseteq V(H)$ be a reduction set of $H$. We consider the graph $G \setminus S$. By our lemma's assumption, each induced subgraph of $G \setminus S$ contains a $(k,\epsilon,\alpha)$-reducible subgraph. Therefore, as $|V(G) \setminus S| < |V(G)|$, our induction hypothesis tells us that $G \setminus S$ has a $(k,\epsilon,\alpha)$-distribution on $L$-colorings of $G \setminus S$. Then, as $H$ is $(k,\epsilon,\alpha)$-reducible, it then follows by definition that $L$ admits a $(k,\epsilon,\alpha)$-distribution on $G$. This completes the proof.
\end{proof}

While Lemma \ref{lem:main-k-red} gives a sufficient condition for the flexible choosability of a graph $G$ in terms of reducible subgraphs of $G$, it is not yet clear how to prove that an induced subgraph of $G$ is reducible. Therefore,
we establish a sufficient condition for determining that a subgraph of a graph $G$ is $(k,\epsilon,\alpha)$-reducible. This sufficient condition is closely related to the definition of reducibility of Dvo\v{r}\'ak, Masa\v{r}\'ik, Mus\'ilek, and Pangr\'ac \cite{DMMP6}.
Given a graph $G$ with induced subgraph $H$, for each integer $k \geq 3$, we define the function $\ell_{H,k}:V(H) \rightarrow \mathbb Z$ so that \[\ell_{H,k}(v) = k - \deg_G(v) + \deg_H(v)\]
for each $v \in V(H)$. Note that if $L$ is a $k$-assignment on $G$
and an $L$-coloring of $G \setminus H$ is fixed, then for each vertex $v \in V(H)$, $\ell_{H,k}(v)$ gives a lower bound for the number of available colors in $L(v)$.

%Now, we state our generalized reducible subgraph definition.

\begin{definition}
\label{def:fixforb}
Let $H$ be a graph, let $k \geq 3$ an integer, 
and let $f:V(H) \rightarrow \mathbb N$. Let $0 < \alpha \leq \frac{1}{k}$.
    We say that $H$ is \emph{$(f,k,\alpha)$-reductive} if 
 for every $f$-assignment $L$ on $H$, there exists a probability distribution on $L$-colorings $\phi$ of $H$ such that the following hold:
\begin{enumerate}
    \item[(FIX)] For each $v \in V(H)$ and each color $c \in L(v)$, $\Pr(\phi(v) = c) \geq \alpha$;
    \item[(FORB)] For each subset $U \subseteq V(H)$ of at most $k - 2$ vertices and each color $c \in \bigcup_{u \in U} L(u)$, $\Pr(\phi(u) \neq c \ \forall u \in U) \geq \alpha$.
\end{enumerate}
\end{definition}

Note that the existence of a probability distribution on proper $L$-colorings $\phi$ of $H$ implies that there exists a set $\Phi$ of proper $L$-colorings $\phi$ of $H$ with probability measure $1$. In particular, $\Phi$ is nonempty, so $H$ is $L$-colorable. Therefore, $H$ is $(f,k,\alpha)$-reductive only if $H$ is $f$-choosable.
Furthermore, by the (FORB) condition, if $H$ is $(f,k,\alpha)$-reductive, then $f(v) \geq 2$ for each vertex $v \in V(H)$.

We 
observe that when $k = 3$,
(FIX)
implies (FORB) whenever $f(v) \geq 2$ for each vertex $v \in V(H)$.
Therefore, in order to show that an induced subgraph $H$ of a graph $G$ is
$(f,3,\alpha)$-reductive,
it is enough to check that (FIX) holds for $H$ and that $f(v) \geq 2$ for each $v \in V(H)$.

For our next tool, we need the following probabilistic lemma.
\begin{lemma}
\label{lem:conditional}
    Let $A_1, \dots, A_t$ be disjoint events in a probability space with nonzero probability. Then, for each event $X$,
    \[\Pr(X|A_1 \cup \dots \cup A_t) = \sum_{i=1}^t \Pr(X|A_i) P(A_i | A_1 \cup \dots \cup A_t).\]
\end{lemma}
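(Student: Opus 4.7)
The plan is to unfold both sides using the definition of conditional probability and then exploit disjointness; this is essentially the law of total probability applied within the conditional measure induced by $A_1 \cup \cdots \cup A_t$. First, I would write the left-hand side as
\[
\Pr(X \mid A_1 \cup \cdots \cup A_t) = \frac{\Pr(X \cap (A_1 \cup \cdots \cup A_t))}{\Pr(A_1 \cup \cdots \cup A_t)},
\]
where the denominator is positive because each $\Pr(A_i) > 0$ by hypothesis. Since the events $A_1, \dots, A_t$ are pairwise disjoint, so are the intersections $X \cap A_1, \dots, X \cap A_t$, and finite additivity gives
\[
\Pr\bigl(X \cap (A_1 \cup \cdots \cup A_t)\bigr) = \sum_{i=1}^t \Pr(X \cap A_i).
\]

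Next, I would apply the product rule $\Pr(X \cap A_i) = \Pr(X \mid A_i)\,\Pr(A_i)$, which is valid since $\Pr(A_i) > 0$, to rewrite the displayed identity as
\[
\Pr(X \mid A_1 \cup \cdots \cup A_t) = \sum_{i=1}^t \Pr(X \mid A_i) \cdot \frac{\Pr(A_i)}{\Pr(A_1 \cup \cdots \cup A_t)}.
\]
Finally, because $A_i \subseteq A_1 \cup \cdots \cup A_t$, the ratio on the right is exactly $\Pr(A_i \mid A_1 \cup \cdots \cup A_t)$, and substituting this in yields the claimed formula.

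There is no real obstacle here: the only thing to be careful about is that every conditional probability appearing in the statement is well-defined, and this is guaranteed by the assumption that each $A_i$ has nonzero probability (which forces $\Pr(A_1 \cup \cdots \cup A_t) > 0$ as well). I expect the entire written proof to be three or four short lines of algebra.
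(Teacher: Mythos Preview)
Your proof is correct and follows essentially the same approach as the paper: both arguments unfold the conditional probabilities via their definitions, cancel $\Pr(A_i)$, and use disjointness to collapse the sum into $\Pr(X \cap (A_1 \cup \cdots \cup A_t))$. The only cosmetic difference is that the paper starts from the right-hand side and simplifies to the left, whereas you go in the opposite direction.
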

\begin{proof}
    By the definition of conditional probability,
    \begin{eqnarray*}
    \sum_{i=1}^t \Pr(X|A_i) P(A_i | A_1 \cup \dots \cup A_t)& =& \sum_{i=1}^t \frac{\Pr (X \cap A_i)}{\Pr(A_i)} \cdot \frac{\Pr(A_i)}{\Pr(A_1 \cup \dots \cup A_t)}  \\
    &=& \frac{\Pr(X \cap (A_1 \cup \dots \cup A_t) )}{\Pr(A_1 \cup \dots \cup A_t)} \\
    &=& \Pr(X | A_1 \cup \dots \cup A_t).
    \end{eqnarray*}
\end{proof}

The following lemma, which is is very similar to \cite[Lemma 4]{DMMP-triangle-free}, shows that under certain conditions, a subgraph of $G$ which is reductive is a reducible subgraph.

\begin{lemma}
    Let $G$ be a graph, and let $H$ be an induced subgraph of $G$. If $H$ is $(\ell_{H,k},k,\alpha)$-reductive subgraph of $G$, then $H$ is $(k,\epsilon,\alpha)$-reducible for each value $0 < \epsilon \leq (\frac{2\alpha}{k})^{k-1}$.
\end{lemma}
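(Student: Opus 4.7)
The natural choice is the reduction set $S = V(H)$. Given a $k$-assignment $L$ on $G$ and a $(k,\epsilon,\alpha)$-distribution producing colorings $\psi$ of $G \setminus V(H)$, I build a distribution on $L$-colorings $\phi$ of $G$ by a three-stage sampling procedure. First, sample $\psi$ from the given distribution. For each $v \in V(H)$, let
\[L'_\psi(v) = L(v) \setminus \{\psi(u) : u \in N_G(v) \setminus V(H)\};\]
since $v$ has exactly $k - \ell_{H,k}(v)$ neighbors outside $V(H)$, we have $|L'_\psi(v)| \geq \ell_{H,k}(v)$. Second, independently for each $v \in V(H)$, choose $L''_\psi(v) \subseteq L'_\psi(v)$ uniformly at random among subsets of size $\ell_{H,k}(v)$, producing an $\ell_{H,k}$-assignment on $H$. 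Third, invoke the $(\ell_{H,k},k,\alpha)$-reductivity of $H$ applied to $L''_\psi$ to sample a proper $L''_\psi$-coloring $\phi'$ of $H$ from a distribution satisfying (FIX) and (FORB). Set $\phi$ to agree with $\psi$ on $V(G) \setminus V(H)$ and with $\phi'$ on $V(H)$.

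To verify the first property (the $\epsilon$-lower bound) for $\phi$, the case $v \notin V(H)$ is immediate. For $v \in V(H)$ and $c \in L(v)$, I chain three lower bounds: the probability $\Pr_\psi(c \in L'_\psi(v)) \geq \alpha^{k-\ell_{H,k}(v)}$ follows by applying the second property of the $\psi$-distribution to the set $N_G(v) \setminus V(H)$, which has size at most $k-2$ because $\ell_{H,k}(v) \geq 2$; conditional on $c \in L'_\psi(v)$, the probability $c \in L''_\psi(v)$ equals $\ell_{H,k}(v)/|L'_\psi(v)| \geq \ell_{H,k}(v)/k$ by the uniform truncation; and conditional on $c \in L''_\psi(v)$, the bound $\Pr(\phi'(v) = c) \geq \alpha$ holds by (FIX) on $H$. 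Multiplying these yields $\Pr(\phi(v) = c) \geq \tfrac{\ell_{H,k}(v)}{k}\,\alpha^{k-\ell_{H,k}(v)+1}$, and a short calculation using $\ell_{H,k}(v) \in [2,k]$, $\alpha \leq 1/k$, and $k \geq 3$ confirms this is at least $(2\alpha/k)^{k-1}$.

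To verify the second property, fix $U \subseteq V(G)$ with $|U| \leq k-2$ and a color $c$, and split $U = U_1 \sqcup U_2$ with $U_1 = U \cap V(H)$. When $U_1 = \emptyset$, the bound $\alpha^{|U|}$ is immediate from the $\psi$-distribution applied to $U_2$. Otherwise, conditional on $\psi$ and $L''_\psi$, applying (FORB) on $H$ to $U_1$ (when $c \in \bigcup_{u \in U_1} L''_\psi(u)$) or noting the trivial case (when $c$ is absent from every $L''_\psi(u)$) yields $\Pr(\phi'(u) \neq c \ \forall u \in U_1 \mid \psi, L''_\psi) \geq \alpha$. Combining with $\Pr_\psi(\psi(u) \neq c \ \forall u \in U_2) \geq \alpha^{|U_2|}$ gives $\Pr(\phi(u) \neq c \ \forall u \in U) \geq \alpha^{|U_2|+1} \geq \alpha^{|U|}$, where the last inequality uses $|U_1| \geq 1$ and $\alpha \leq 1$.

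The main delicacy is the (FIX)-verification step: an arbitrary deterministic truncation of $L'_\psi(v)$ down to size $\ell_{H,k}(v)$ would give no lower bound on $\Pr(\phi(v) = c)$ for colors $c$ that happen to be discarded, so the uniform random truncation is essential to supply the factor $\ell_{H,k}(v)/k \geq 2/k$. The exponent $k-1$ in the target $\epsilon = (2\alpha/k)^{k-1}$ is then tuned precisely to absorb this $2/k$ factor together with the worst-case survival cost $\alpha^{k-2}$ that arises when $\ell_{H,k}(v) = 2$.
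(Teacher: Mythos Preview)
Your proof is correct and follows essentially the same approach as the paper's: the same reduction set $S=V(H)$, the same three-stage sampling (sample the outer coloring, truncate the residual lists uniformly at random to size $\ell_{H,k}(v)$, then sample on $H$ via reductivity), and the same split $U = (U\cap V(H))\sqcup (U\setminus V(H))$ for the (FORB) verification. Your (FIX) bound $\tfrac{\ell_{H,k}(v)}{k}\,\alpha^{k-\ell_{H,k}(v)+1}$ is a mild sharpening of the paper's cruder $\tfrac{2}{k}\,\alpha^{k-1}$, and you handle the edge case $c\notin\bigcup_{u\in U_1}L''_\psi(u)$ in (FORB) a bit more explicitly than the paper does, but otherwise the arguments are identical.
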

\begin{proof}
We show that $H$ is $(k,\epsilon,\alpha)$-reducible with a reduction set $S = V(H)$. 
We let $L$ be a $k$-assignment on $G$. We assume that $G \setminus H$ has a $(k,\epsilon,\alpha)$-distribution on $L$-colorings, and we aim to show that $G$ has a $(k,\epsilon,\alpha)$-distribution on $L$-colorings.

We construct a distribution on $L$-colorings of $G$ as follows. First, we randomly choose an $L$-coloring $\phi$ on $G \setminus H$ according to a $(k,\epsilon,\alpha)$-distribution. Then, we let $L'$ be the list assignment for $H$ defined by $L'(z) = L(z) \setminus \{\phi(v) : v \in V(G \setminus H) \cap N(z)\}$, for each $z \in  V(H)$.
    In other words $L'(z)$ consists of the colors from $L(z)$ which are available at $z$ after $G \setminus H$ is colored by $\phi$.
    Note that $|L'(z)| \geq \ell_{H,k}(z) \geq 2$ for each $z \in V(H)$.
    We choose a set of $ \ell_{H,k}(z)$ colors uniformly at random from each list $L'(z)$, 
    and we delete all other colors from $L'(z)$, so that $|L'(z)| = \ell_{H,k}(z)$ for each vertex $z \in V(H)$.
    Then, we define a probability distribution on $L'$-colorings of $H$ that satisfies (FIX) and (FORB) for our value $\alpha$, and we choose an $L'$-coloring $\psi$ of $H$ according to this distribution. Finally, we combine $\phi$ and $\psi$ to obtain an $L$-coloring of $G$.

    We first argue that for each vertex $v \in V(G)$ and color $c \in L(v)$, the probability that $v$ is colored with $c$ is at least $\epsilon$.     
    If $v \in V(G) \setminus V(H)$ and $c \in L(v)$, then by the induction hypothesis, $\Pr(\phi(v) = c) \geq \epsilon$. If $v \in V(H)$ and $c \in L(v)$,
    then let $U$ be the set of neighbors of $v$ in $V(G) \setminus V(H)$. 
    Since $H$ is $(\ell_{H,k},k, \alpha)$-reductive, it follows that $\ell_{H,k}(v) \geq 2$; thus, 
    $v$ has at most $k-2$ neighbors in $V(G) \setminus V(H)$, so $|U| \leq k-2$.
    Hence, as $\phi$ is chosen according to a $(k,\epsilon,\alpha)$-distribution, the probability that $\phi(u) \neq c$ for all vertices $u \in U$ is at least $\alpha^{k-2}$.

Next, suppose it is given that $\phi = \phi_0$, where $\phi_0$ is a fixed $L$-coloring of $G \setminus H$ such that $\phi_0(u) \neq c$ for all $u \in U$. With $\phi = \phi_0$ given, the conditional probability that $c \in L'(v)$ is at least $\frac{\ell_{H,k}(v)}{k} \geq \frac{2}{k}$. Then, as our distribution on colorings $\phi$ satisfies (FIX) and (FORB),
the subsequent conditional probability that $\psi(v) = c$ is at least $\alpha$. Therefore, with $\phi = \phi_0$ given, the conditional probability that $\psi(v) = c$ is at least $\frac{2\alpha}{k}$.

Now, let $\Phi$ be the set of all fixed $L$-colorings $\phi_0$ of $G \setminus H$ for which $\phi_0(u) \neq c$ for all $u \in U$. By Lemma \ref{lem:conditional}, 
\begin{eqnarray*}
\Pr (\psi(v) = c | \phi(u) \neq c \  \forall u \in U)  &=&
\Pr \left (\psi(v) = c \biggr \rvert \bigcup_{\phi_0 \in \Phi} (\phi = \phi_0) \right ) \\
&=& \sum_{\phi_0 \in \Phi} \Pr(\psi(v) = c| \phi = \phi_0) \Pr(\phi = \phi_0 | \phi \in \Phi) \\
&\geq & \frac{2\alpha}{k} \sum_{\phi_0 \in \Phi} \Pr(\phi = \phi_0 | \phi \in \Phi) \\
&=& \frac{2\alpha}{k}.
\end{eqnarray*}
Therefore, $v$ ultimately receives the color $c$ with probability at least $(\frac{2\alpha}{k})^{k-2} \left ( \frac{2 \alpha}{k} \right )= \epsilon$.

    Next, we argue that if $U \subseteq V(G)$ is a vertex subset of size at most $k-2$ and $c \in \bigcup_{u \in U} L(u)$,
    then with probability at least $\alpha^{|U|}$, no vertex $u \in U$ receives the color $c$. 
    We write $U_1 = U \setminus V(H)$ and $U_2 = U \cap V(H)$. If $U = U_1$, then by the induction hypothesis, $\phi(u) \neq c$ for all $u \in U$ with probability at least $\alpha^{|U|}$. Otherwise, 
    the induction hypothesis tells us that $\phi(u) \neq c$ for all $u \in U_1$ with probability at least $\alpha^{|U_1|}$.
Next, suppose it is given that $\phi = \phi_0$, where $\phi_0$ is a fixed $L$-coloring of $G \setminus H$ such that $\phi_0(u) \neq c$ for all $u \in U_1$. With $\phi = \phi_0$ given, by (FORB), the conditional probability that $\psi(u) \neq c$ for all $u \in U_2$ is at least $\alpha$.

Now, let $\Phi$ 
be the set of all fixed $L$-colorings $\phi_0$ of $G \setminus H$ for which $\phi_0(u) \neq c$ for all $u \in U_1$. By Lemma \ref{lem:conditional},
\begin{eqnarray*}
   & &  \Pr\left (\psi(u) \neq c \ \forall u \in U_2 \biggr |  \phi(u) \neq c \ \forall u \in U_1 \right ) \\
    &=& \Pr \left (\psi(u) \neq c \ \forall u \in U_2 | \bigcup_{\phi_0 \in \Phi} (\phi = \phi_0) \right ) \\
    &=& \sum_{\phi_0 \in \Phi}  \Pr \left (\psi(u) \neq c \ \forall u \in U_2 | (\phi = \phi_0) \right )  \Pr(\phi = \phi_0 | \phi \in \Phi) \\
    &\geq & \alpha \sum_{\phi_0 \in \Phi}    \Pr(\phi = \phi_0 | \phi \in \Phi) \\
    &=& \alpha.
\end{eqnarray*}
As $\phi(u) \neq c$ for all $u \in U_1$ with probability at least $\alpha^{|U_1|}$, our final coloring $\phi \cup \psi$ avoids $c$ at all $u \in U$ with probability at least $\alpha \cdot \alpha^{|U_1|} \geq \alpha^{|U_2| + |U_1|} = \alpha^{|U|}$. Therefore, our distribution on $L$-colorings of $G$ is a $(k,\epsilon,\alpha)$-distribution, completing the proof.   
\end{proof}

In the previous framework of Dvo\v{r}\'ak, Masa\v{r}\'ik, Mus\'ilek, and Pangr\'ac \cite{DMMP6}, 
 a subgraph $H$ of $G$ is $k$-reducible 
if for every $\ell_{H,k}$-assignment $L$ on $H$,
the probabilities in (FIX) and (FORB) are positive, but not necessarily bounded below by some $\alpha$.
However, in practice, the previous framework only considers reducible subgraphs with at most $b$ vertices, for some constant $b$.
Therefore, if $H$ is $k$-reducible in the previous framework and $|V(H)| \leq b$, then
a uniform distribution on all $L$-colorings of $H$ guarantees that (FIX) and (FORB) both hold with the value $\alpha = k^{-b}$;
hence, $H$ is also $(k,\epsilon, \alpha)$-reducible in our new framework whenever $\alpha = k^{-b}$ and $0 < \epsilon \leq  (\frac{2\alpha} {k})^{k-1}$.

\section{Tools for identifying reducible subgraphs}
\label{sec:reducible}
In order to prove Theorem \ref{thm:mad3},
we consider a graph $G$ of maximum average degree less than $3$, and we show that every induced 
subgraph of $G$ has an induced subgraph which is $(3,\epsilon,\alpha)$-reducible for some constants $\\epsilon, alpha > 0$. 
Then, we apply Lemma \ref{lem:main-k-red} to argue that $G$ is weighted $\epsilon$-flexibly $3$-choosable.
In order to prove that an induced subgraph $H$ of $G$ is $(3,\epsilon,\alpha)$-reducible, we need tools for constructing list coloring distributions on $H$. In this section, we aim to develop those tools.
%We give names to the lemmas in this section that we frequently need later in order to make them easier to reference.

%If $G$ is a graph with a list assignment $L$, $H$ is an induced subgraph of $G$, and $\phi$ is an $L$-coloring of $G\setminus H$, then we say that a color $c \in L(v)$ is \emph{available} at a vertex $v \in V(H)$ if $\phi(w) \neq c$ for each neighbor $w \in N(v) \setminus V(H)$.
Given a connected graph $G$, a vertex $v \in V(G)$ is a \emph{cut vertex} if $G \setminus \{v\}$ has at least two components.
A connected induced subgraph $B$ of $G$ is a \emph{block} if the graph $B$ has no cut vertex and every connected subgraph $H$ of $G$ satisfying $B \subsetneq H \subseteq G$ has a cut vertex.
In other words,
an
induced subgraph $B \subseteq G$ is a block if $B$ is maximal with respect to the property of being $2$-connected or isomorphic to $K_2$. 
A \emph{terminal block} of $G$ is a block containing at most one cut vertex of $G$.
The \emph{block-cut tree} of $G$ is a tree $T$ whose vertices consist of the blocks and cut vertices of $G$, where a block $B$ is adjacent with a cut vertex $v \in V(G)$ if and only if $v \in V(B)$. Note that a block $B$ of $G$ is a terminal block if and only if $B$ has degree $1$ or $0$ in the block-cut tree of $G$.

\begin{lemma}
\label{lem:perfect}
    Let $G$ be a graph, and let $H$ be an induced subgraph of $G$. If $H$ is a terminal block of $G$ and $H$ is $(3,3,\frac 13)$-reductive, then $H$ is $(3,\epsilon,\alpha)$-reducible for all values $0 < \epsilon \leq \alpha \leq \frac{1}{3}$.
\end{lemma}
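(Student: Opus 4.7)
The plan is to couple the assumed $(3,\epsilon,\alpha)$-distribution on $L$-colorings of $G\setminus S$ with a $(3,3,\tfrac13)$-reductive distribution on $H$ by matching their values at the unique cut vertex joining $H$ to the rest of $G$. Because $H$ is a terminal block, at most one vertex $v\in V(H)$ has neighbors in $V(G)\setminus V(H)$. If no such $v$ exists then $H=G$, in which case I take $S=V(H)$ and the reductive distribution on $H$ is already a $(3,\epsilon,\alpha)$-distribution, since $|L(u)|=3$ together with (FIX) at level $\tfrac13$ forces each marginal $\Pr(\psi(u)=c)$ to equal $\tfrac13\ge\alpha\ge\epsilon$. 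Otherwise I take $S=V(H)\setminus\{v\}$, which is nonempty since $|V(H)|\ge 2$ for any block containing a cut vertex.

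In this main case, given a $3$-assignment $L$ on $G$ and a $(3,\epsilon,\alpha)$-distribution $\mathcal D_1$ on $L$-colorings $\phi$ of $G\setminus S$, I apply the $(3,3,\tfrac13)$-reductive property of $H$ to obtain a distribution $\mathcal D_2$ on $L$-colorings $\psi$ of $H$ whose marginals all equal $\tfrac13$. I sample $\phi\sim\mathcal D_1$, then sample $\psi$ from $\mathcal D_2$ conditioned on $\psi(v)=\phi(v)$ (well defined since $\Pr_{\mathcal D_2}(\psi(v)=c')=\tfrac13>0$ for every $c'\in L(v)$), and combine them into the coloring $\chi$ of $G$ that equals $\phi$ on $V(G)\setminus S$ and $\psi$ on $S$. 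Because $H$ is a terminal block, every edge between $V(H)$ and $V(G)\setminus V(H)$ is incident to $v$, where $\phi$ and $\psi$ agree by construction, so $\chi$ is a proper $L$-coloring of $G$. On $V(G)\setminus S$ the marginals of $\chi$ are inherited from $\mathcal D_1$, so (FIX) and (FORB) transfer directly there.

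For $u\in S$ and $c\in L(u)$, I would use the identity $\Pr_{\mathcal D_2}(\psi(u)=c\mid\psi(v)=c')=3\Pr_{\mathcal D_2}(\psi(u)=c,\psi(v)=c')$, which is valid because the $\mathcal D_2$-marginal at $v$ is uniform, to write
\[
\Pr(\chi(u)=c)\;=\;\sum_{c'\in L(v)}\Pr_{\mathcal D_1}(\phi(v)=c')\cdot 3\Pr_{\mathcal D_2}(\psi(u)=c,\psi(v)=c').
\]
Sandwiching $\Pr_{\mathcal D_1}(\phi(v)=c')$ between $\epsilon$ and $1-\alpha$ (from (FIX) and (FORB) on $\mathcal D_1$) and summing out $c'$ using $\Pr_{\mathcal D_2}(\psi(u)=c)=\tfrac13$ yields $\epsilon\le\Pr(\chi(u)=c)\le 1-\alpha$, which is exactly (FIX) with parameter $\epsilon$ and single-vertex (FORB) with parameter $\alpha$; since $k-2=1$, no larger-subset (FORB) condition needs verification, so $\chi$ follows a $(3,\epsilon,\alpha)$-distribution and $S$ is a reduction set. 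The one delicate point is the coupling at $v$: the $\tfrac13$ in the reductive hypothesis is precisely what is needed for the factor of $3$ arising from $1/\Pr_{\mathcal D_2}(\psi(v)=c')$ to cancel cleanly against the $\epsilon$ and $1-\alpha$ bounds, so the argument genuinely uses the full strength of $H$ being $(3,3,\tfrac13)$-reductive.
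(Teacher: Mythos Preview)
Your proof is correct and follows essentially the same approach as the paper: the same reduction set $S=V(H)\setminus\{v\}$ (with $v$ the unique cut vertex), the same coupling of $\phi$ with $\psi$ conditioned on agreement at $v$, and the same computation exploiting that the $\mathcal D_2$-marginal at $v$ equals $\tfrac13$ to convert the $\epsilon$ and $1-\alpha$ bounds on $\Pr_{\mathcal D_1}(\phi(v)=c')$ into the required bounds on $\Pr(\chi(u)=c)$. Your observation that $k-2=1$ reduces (FORB) to a single-vertex condition is exactly how the paper proceeds as well.
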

\begin{proof}
If $H = G$, then $H$ is $(3,3,\frac{1}{3})$-reductive; thus, for each $3$-assignment $L$ on $G$, there exists a distribution on $L$-colorings $\phi$ of $G$ so that for each vertex $v \in V(G)$ and color $c \in L(v)$, $\Pr(\phi(v) = c) = \frac{1}{3}$.  Hence, $L$ admits a $(3,\epsilon,\alpha)$-distribution on $G$.

Otherwise, as $H$ is a terminal block of $G$, $V(G) \cap V(H)$ contains a single cut vertex $x$. 
We show that $H$ is $(3,\epsilon,\alpha)$-reducible with a reduction set $S = V(H) \setminus \{x\}$.
Let $L$ be a $3$-assignment on $G$, and suppose that there exists a $(3,\epsilon,\alpha)$-distribution on $L$-colorings of $G \setminus S$. 
We construct a $(3,\epsilon,\alpha)$-distribution on $L$-colorings of $G$ as follows. First, we randomly choose an $L$-coloring $\phi$ of $G \setminus S$ according to a $(3,\epsilon,\alpha)$-distribution, and we use $\phi$ to color $G \setminus S$.
Next, we fix a distribution on $L$-colorings $\psi$ of $H$ such that for each $v \in V(H)$ and $c \in L(v)$, $\Pr(\psi(v) = c) = \frac{1}{3}$. 
Finally, we give $H$ an $L$-coloring according to the conditional random variable $\psi|(\psi(x) = \phi(x))$. We combine these colorings of $G \setminus S$ and $H$ in order to obtain a random $L$-coloring of $G$.

 We first argue that for each vertex $v \in V(G)$ and color $c \in L(v)$, 
 $v$ receives the color $c$ with probability at least $\epsilon$.
 If $v \in V(G) \setminus S$ and $c \in L(v)$, then by the induction hypothesis, $\Pr(\phi(v) = c) \geq \epsilon$. If $v \in S$ and $c \in L(v)$,
    then by Lemma \ref{lem:conditional}, $\psi$ assigns $c$ to $v$ with probability
    \begin{eqnarray*}
    \Pr(\psi(v) = c | \psi(x) = \phi(x)) &=&\sum_{c' \in L(x)} \Pr(\psi(v) = c | \psi(x) = c') \Pr(\phi(x) = c') \\
    &\geq & \varepsilon \sum_{c' \in L(x)} \frac{\Pr(\psi(v) = c \land \psi(x) = c')}{\Pr(\psi(x) = c')} \\
    &=& 3 \epsilon \sum_{c' \in L(x)} \Pr(\psi(v) = c \land \psi(x) = c') \\
    &=&
    3 \epsilon \Pr(\psi(v) = c) = \epsilon.
    \end{eqnarray*}
    Therefore, $v$ is colored with $c$ with probability at least $\epsilon$.
    
    Next, we aim to show that with probability at least $\alpha$, $v$ does not receive the color $c$.    
    If $v \not \in S$, then by the induction hypothesis, 
    $\phi(v) \neq c$ with probability at least $\alpha$, and the argument is complete. 
    Otherwise, $v \in S$. Then, with $\phi$ fixed, the conditional probability that $v$ is colored with $c$ is
    \begin{eqnarray*}
    \Pr(\psi(v) = c  \ | \psi(x) = \phi(x)) &=&\sum_{c' \in L(x)}  \Pr(\psi(v) =c | \psi(x) = c') \Pr(\phi(x) = c') \\
    &\leq & (1-\alpha) \sum_{c' \in L(x)} \frac{ \Pr(\psi(v) = c   \land \psi(x) = c') }{\Pr(\psi(x) = c')} \\
    &=& 3(1-\alpha) \sum_{c' \in L(x)} \Pr(\psi(v) = c   \land \psi(x) = c')  \\
    &=&
     3(1-\alpha) \Pr(\psi(v) = c  ) \\
    &=& 1 - \alpha.
    \end{eqnarray*}
    Therefore, our distribution on $L$-colorings of $G$ is a $(3,\epsilon,\alpha)$-distribution.    
\end{proof}

We say that a \emph{diamond} is a graph on exactly four vertices with exactly five edges. 
We write $K_4^-$ for the graph isomorphic to a diamond.
In other words, a diamond is a graph obtained from $K_4$ by deleting one edge.
The following lemma implies that if a graph $G$
has a terminal block $B$ isomorphic to a diamond, then $B$ is a $(3,\epsilon,\alpha)$-reducible subgraph of $G$ for all $0 < \epsilon \leq \alpha \leq \frac{1}{3}$.

\begin{lemma}
\label{lem:diamond}
    If $D$ is a diamond, then $D$ is $(3,\epsilon,\alpha)$-reducible for all values $0 < \epsilon \leq \alpha \leq \frac{1}{3}$.
\end{lemma}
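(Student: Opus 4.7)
The plan is to reduce the lemma to showing that the diamond $D$ is $(3,3,\tfrac{1}{3})$-reductive---that is, that for every $3$-assignment $L$ on $D$ there exists a distribution on proper $L$-colorings of $D$ in which each vertex receives each color from its list with probability exactly $\tfrac{1}{3}$. Once established, the conclusion follows from Lemma~\ref{lem:perfect}, because $D$ is $2$-connected and is therefore its own (unique) terminal block when viewed as the ambient graph $G = D$ (or, more generally, $D$ is a terminal block of any graph containing it as one). To set up the construction, I would label $V(D) = \{v_1, v_2, u_1, u_2\}$, where $v_1v_2 \in E(D)$, $u_1u_2 \notin E(D)$, and every $u_iv_j$ is an edge, so that $v_1, v_2$ have degree $3$ in $D$ and $u_1, u_2$ have degree $2$.

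Given a $3$-assignment $L$, I would build the distribution in two stages. First, I would construct a joint distribution $\mu_V$ on pairs $(c_1, c_2) \in L(v_1) \times L(v_2)$ with $c_1 \neq c_2$ in which each color $c \in L(v_i)$ appears in position $i$ with marginal probability $\tfrac{1}{3}$. Such a $\mu_V$ exists by a standard doubly-stochastic argument on the $3 \times 3$ array indexed by $L(v_1) \times L(v_2)$, with the diagonal entries suppressed for colors in $L(v_1) \cap L(v_2)$ and their mass redistributed along the same row and column. Second, given $(c_1, c_2)$ sampled from $\mu_V$, I would independently sample $\phi(u_i)$ from a conditional distribution $q^{u_i}_{c_1, c_2}$ supported on $L(u_i) \setminus \{c_1, c_2\}$, which is nonempty because $|L(u_i)| = 3$. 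The conditional distributions $q^{u_i}_{c_1,c_2}$, and the precise choice of $\mu_V$ among its many admissible variants, are to be chosen so that the overall marginal at each $u_i$ is uniform on $L(u_i)$.

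The main obstacle lies in the second stage, specifically when $\{c_1, c_2\} \subseteq L(u_i)$: in that event $L(u_i) \setminus \{c_1, c_2\}$ is a singleton and $\phi(u_i)$ is forced, contributing rigidly to the $u_i$-marginal with weight $\mu_V(c_1, c_2)$. I would handle this by a case analysis on the intersection patterns among $L(v_1), L(v_2), L(u_1), L(u_2)$, exploiting two sources of freedom: the multiple admissible choices of $\mu_V$, and the flexibility in $q^{u_i}_{c_1, c_2}$ at pairs where $|L(u_i) \setminus \{c_1, c_2\}| \geq 2$. Because $u_1$ and $u_2$ are treated symmetrically and are independent conditional on $(c_1, c_2)$, their marginal-uniformity constraints can be addressed one at a time, and a short classification of configurations up to color relabeling reduces the verification to a handful of canonical cases (such as when $L(u_i) = L(v_1) = L(v_2)$, when $L(u_i)$ meets $L(v_1) \cup L(v_2)$ in two colors, etc.). In each case, the underlying linear feasibility problem has strictly more degrees of freedom than marginal equations and admits an explicit solution, which I expect to be the routine-but-tedious bulk of the argument.
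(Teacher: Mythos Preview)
Your overall architecture matches the paper's: establish that $D$ is $(3,3,\tfrac13)$-reductive, then invoke Lemma~\ref{lem:perfect}. The divergence is entirely in how the reductivity is obtained. The paper does it in one line by citing \cite[Theorem~3.2]{BMS}, which says that any graph of treewidth~$2$ with a $3$-assignment $L$ admits a set of six $L$-colorings in which every pair $(v,c)$ is hit exactly twice; taking the uniform distribution on those six colorings gives the $\tfrac13$ marginals immediately. Your route builds the distribution by hand via a two-stage sampling and a case analysis on list intersections, which is more elementary and self-contained but substantially longer.

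One point in your plan deserves care. You write that because $u_1$ and $u_2$ are conditionally independent given $(c_1,c_2)$, their uniform-marginal constraints ``can be addressed one at a time.'' That is true only after $\mu_V$ has been fixed: the conditional distributions $q^{u_1}$ and $q^{u_2}$ are then decoupled. But certain list configurations force constraints on $\mu_V$ itself (for instance, when $L(v_1)\cap L(v_2)\subseteq L(u_i)$ and $|L(v_1)\cap L(v_2)|=2$, achieving probability $\tfrac13$ for each common color at $u_i$ forces the off-intersection pair to have $\mu_V$-mass zero), so you must verify that a single $\mu_V$ exists satisfying the constraints coming from both $u_1$ and $u_2$ simultaneously. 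This does go through, but it is not quite a ``one at a time'' argument, and your degree-of-freedom count is heuristic rather than a proof of feasibility. The paper's citation sidesteps all of this.
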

\begin{proof}
    Let $L$ be a $3$-assignment on $D$.
    As $D$ has treewidth $2$, 
    it follows from \cite[Theorem 3.2]{BMS} that there exists a set $\Phi$ of six $L$-colorings of $D$ such that for each vertex $v \in V(D)$ and color $c \in L(v)$, $\phi(v) = c$ for exactly two $L$-colorings $\phi \in \Phi$. Therefore, by taking an $L$-coloring from $\Phi$ uniformly at random, we see that $H$ is $(3,3,\frac{1}{3})$-reductive. Then, the lemma follows from Lemma \ref{lem:perfect}.
\end{proof}

Our next lemma, first introduced by Erd\H{o}s, Rubin, and Taylor \cite{ERT}, characterizes the graphs $G$ which have an $L$-coloring for every list assignment $L$ that gives at least $\deg(v)$ colors to each vertex $v \in V(G)$.
In the lemma statement, a \emph{theta} is a graph obtained from a cycle by adding a single edge.

\begin{lemma}
\label{lem:ERT}
If $G$ is a connected
graph and $f:V(G) \rightarrow \mathbb N$ is a function satisfying $f(v) \geq \deg(v)$ for each $v \in V(G)$, then $G$ is $f$-choosable if and only if one of the following conditions holds:
\begin{itemize}
    \item $f(v) > \deg(v)$ for at least one $v \in V(G)$;
    \item $G$ has a block that is not a clique and is not an odd cycle;
   \item $G$ has an induced even cycle or an induced theta subgraph.
\end{itemize}
\end{lemma}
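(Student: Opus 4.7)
The plan is to prove the two directions of this classical Erd\H{o}s--Rubin--Taylor degree-choosability theorem separately.

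For the ``only if'' direction, assume none of the three conditions holds. Then $f(v) = \deg(v)$ for every $v$, every block of $G$ is a clique or an odd cycle (so $G$ is a ``Gallai tree''), and $G$ has no induced even cycle or theta. I would construct an $f$-assignment $L$ admitting no proper $L$-coloring by induction on the block-cut tree. Rooting the block-cut tree at some terminal block, I assign $\{1,\ldots,k-1\}$ to the non-cut vertices of each clique block $K_k$ and $\{1,2\}$ to each odd-cycle block, rotating colors across each cut vertex so that the color forced at the cut vertex by its children conflicts with any extension into the parent block. Since $K_k$ with the constant list $\{1,\ldots,k-1\}$ has no proper $L$-coloring and an odd cycle with constant list $\{1,2\}$ has no proper $L$-coloring, this propagation yields a global $f$-assignment with no proper $L$-coloring.

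For the ``if'' direction, handle condition $(1)$ via the standard spanning-tree trick: let $v_0$ satisfy $f(v_0) > \deg(v_0)$, take a BFS spanning tree rooted at $v_0$, and greedily color vertices in reverse BFS order. Each non-root vertex has at least one uncolored neighbor (its BFS parent) when its turn comes, so at most $\deg(v)-1 < f(v)$ of its list colors are forbidden; $v_0$ is colored last with $f(v_0) > \deg(v_0)$ choices, so at least one color is free.

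For conditions $(2)$ and $(3)$, I would reduce to a single technical case. The key structural fact is that a $2$-connected graph which is not a cycle and not a clique contains an induced theta subgraph; combined with the even-cycle block case, condition $(2)$ implies condition $(3)$, so it suffices to show that condition $(3)$ yields $f$-choosability. The workhorse is a pair of base facts: every even cycle is $2$-choosable, and every theta is $f_0$-choosable with $f_0 \equiv 2$ at degree-$2$ vertices and $f_0 \equiv 3$ at the two degree-$3$ hubs. Both are proved by short direct arguments that classify the uncolorable list patterns. Given an induced even cycle or theta $H \subseteq G$, I would color $H$ first using appropriate $2$- or $3$-element sublists of $L$ (available because $|L(v)| \geq \deg_G(v) \geq \deg_H(v)$), and then extend to $G \setminus V(H)$ in reverse BFS order rooted at a vertex adjacent to $H$, using connectivity of $G$ to ensure each remaining component attaches to $H$. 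The main obstacle is the structural claim that every $2$-connected graph that is neither a cycle nor a clique contains an \emph{induced} theta: a na\"ive ear decomposition produces a theta that may acquire chords, so one must choose the ear carefully, for instance via a minimum-length ear argument on a well-chosen cycle. A secondary subtlety is the BFS extension: after coloring $H$, a vertex of $G \setminus V(H)$ may have all its neighbors already colored, and careful rooting of the BFS at a vertex adjacent to $H$ is needed so that every subsequently-colored vertex retains an uncolored BFS parent.
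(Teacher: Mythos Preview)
The paper does not prove this lemma at all; it is quoted as a classical result of Erd\H{o}s, Rubin, and Taylor and simply cited. So there is no ``paper's own proof'' to compare against, and your task reduces to checking whether your sketch is internally sound.

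Your outline for the ``only if'' direction and for condition~(1) is standard and fine. The substantive gap is in your treatment of condition~(3). You propose to color the induced even cycle or theta $H$ \emph{first}, using $\deg_H$-sized sublists, and then to extend to $G\setminus V(H)$ by a reverse-BFS greedy argument. This order is backwards. In reverse BFS on $G\setminus V(H)$ rooted at some $r$ adjacent to $H$, every vertex except $r$ has its BFS parent uncolored when it is processed, but $r$ itself is colored last: all of its neighbours in $G\setminus V(H)$ are already colored, and all of its neighbours in $H$ were colored at the outset, so $r$ may have $\deg_G(r)$ forbidden colors and only $\deg_G(r)$ colors in its list. Concretely, take $G$ to be a $C_4$ on $\{1,2,3,4\}$ together with a triangle on $\{5,6,7\}$ and the bridge $15$; if $H=C_4$ is colored first and vertex~$1$ happens to receive the unique color in $L(5)\setminus L(6)$ (with $L(6)=L(7)$), the triangle becomes an odd cycle with identical $2$-lists and cannot be colored.

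The fix is to reverse the order: process $G\setminus V(H)$ first in order of \emph{decreasing} distance from $H$, so that every such vertex has an uncolored neighbour on a shortest path toward $H$ and therefore at most $\deg_G(v)-1$ forbidden colors; then color $H$ last. At that point each $h\in V(H)$ has lost at most $\deg_G(h)-\deg_H(h)$ colors to already-colored outside neighbours, leaving a list of size at least $\deg_H(h)$, and you finish by invoking the base fact that even cycles (and thetas, though every induced theta contains an induced even cycle) are degree-choosable. Your ``secondary subtlety'' paragraph shows you sensed the problem, but rooting the BFS differently does not repair it; the order of $H$ versus $G\setminus V(H)$ must be swapped.
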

Erd\H{o}s, Rubin, and Taylor also proved that the second and third conditions of Lemma \ref{lem:ERT} are equivalent.
Lemma \ref{lem:ERT} implies that if $G$ is a connected graph and $f(v) = \deg(v)$ for each vertex $v \in V(G)$, then the only case in which $G$ is not $f$-choosable occurs when each block of $G$ is a clique or odd cycle. A connected graph in which each block is a clique or odd cycle is called a \emph{Gallai tree}. We need one more sufficient condition by which a Gallai tree is $L$-colorable.

\begin{lemma}[{\cite{KSW}}]
    \label{lem:Gallai}
    Let $G$ be a Gallai tree, and let $L$ be a list assignment on $G$ satisfying $|L(v)| = \deg(v)$ for each vertex $v \in V(G)$. If $G$ has a terminal block $B$ with two vertices $u,w \in V(B)$ for which $\deg(u) = \deg(w)$ and $L(u) \neq L(w)$, then $G$ is $L$-colorable. 
\end{lemma}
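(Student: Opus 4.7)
The plan is to proceed by induction on $|V(G)|$. A preliminary observation is that the two special vertices $u, w$ must both be non-cut vertices of $G$: every vertex of the block $B$ (a clique or odd cycle) has the same degree inside $B$, so $\deg_G(v) - \deg_B(v)$ equals the number of edges from $v$ to $V(G) \setminus V(B)$, which is $0$ for non-cut vertices in $V(B)$ and at least $1$ for the unique cut vertex $x$ of $G$ in $V(B)$ (if one exists). Hence $\deg_G(u) = \deg_G(w)$ forces $u, w \in V(B) \setminus \{x\}$, so $|V(B)| \geq 3$ and $\deg_B(x) \geq 2$ in either case.

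For the base case $G = B$, I would split into two subcases. If $B = K_n$, I would pick any $c \in L(u) \setminus L(w)$ (which is nonempty since $|L(u)| = |L(w)|$ while $L(u) \neq L(w)$) and set $\phi(u) = c$, leaving an instance on $K_{n-1}$ with lists $L'(v) = L(v) \setminus \{c\}$. Since $|L'(w)| = n - 1 > n - 2 = \deg_{K_{n-1}}(w)$, Lemma \ref{lem:ERT} yields an $L'$-coloring. If $B = C_{2k+1}$, then all lists have size $2$, and $L(u) \neq L(w)$ implies the lists on the cycle are not all identical, so there is some edge $yz$ with $L(y) \neq L(z)$. Picking $c \in L(y) \setminus L(z)$ and setting $\phi(y) = c$, I can then greedily color the remaining path, since $c \notin L(z)$ leaves the adjacent endpoint with two available choices.

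For the inductive step with $G \neq B$, let $x$ be the unique cut vertex of $G$ in $V(B)$ and let $G' = G[V(G) \setminus (V(B) \setminus \{x\})]$. Since $|L(x)| = \deg_G(x) \geq \deg_{G'}(x) + 2$, Lemma \ref{lem:ERT} gives an $L$-coloring of $G'$; moreover, for any single forbidden color $c^* \in L(x)$, such a coloring exists with $\phi(x) \neq c^*$, since replacing $L(x)$ with $L(x) \setminus \{c^*\}$ still leaves a list strictly larger than $\deg_{G'}(x)$. After obtaining the color $c_x$ of $x$, I extend by coloring $V(B) \setminus \{x\}$. If $B = K_n$, the residual instance on $K_{n-1}$ has lists $L'(v) = L(v) \setminus \{c_x\}$; a short check shows $L'(u) \neq L'(w)$ regardless of $c_x$, because $|L(u) \triangle L(w)| \geq 2$ cannot be contained in $\{c_x\}$. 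Hence either some $|L'(v)| > \deg_{K_{n-1}}(v)$ (and Lemma \ref{lem:ERT} applies directly) or else all list sizes equal $\deg_{K_{n-1}}(v)$, and the inductive hypothesis completes the coloring.

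The hard part will be the extension when $B$ is an odd cycle: after coloring $x$, the residual graph is a path, and greedy coloring from either endpoint can fail only when both endpoints' lists have been reduced to size $1$ (that is, when $c_x \in L(v_2) \cap L(v_{2k+1})$), forcing a conflict along the path. The crucial claim to be established is that the set of bad colors $c_x$ (for which such a conflict arises) has size at most $\deg_B(x) - 1 = 1$, an estimate which genuinely uses the hypothesis $L(u) \neq L(w)$. Once this is in hand, I would forbid the single bad color during the application of Lemma \ref{lem:ERT} to $G'$, using the flexibility noted above, and the extension then proceeds without incident.
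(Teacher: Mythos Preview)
The paper does not supply a proof of Lemma~\ref{lem:Gallai}; it is quoted from \cite{KSW} and used as a black box, so there is no argument in the paper to compare yours against.

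Your plan is nonetheless sound. The preliminary observation that $u$ and $w$ must both be non-cut vertices of $B$ is correct and important, and your treatment of the base case and of the $K_n$ inductive step is complete. The only genuine gap is the odd-cycle inductive step: you correctly isolate the crucial claim (that at most one colour $c_x$ is bad) but do not prove it. It is true. If two colours $c,c'$ were both bad then necessarily $L(v_2)=L(v_{2k+1})=\{c,c'\}$ and every $L$-colouring of the path $v_2\cdots v_{2k+1}$ would have $\phi(v_2)\neq\phi(v_{2k+1})$. Since the lists along this path are not all equal, pick the least $j$ with $L(v_j)\neq L(v_2)$, colour $v_{j-1}$ from $L(v_{j-1})\setminus L(v_j)=L(v_2)\setminus L(v_j)$, propagate backwards (all forced) to determine $\phi(v_2)$, and then use the two free choices at $v_j$ together with Lemma~\ref{lem:path-2} on $v_j\cdots v_{2k+1}$ to steer $\phi(v_{2k+1})$ to equal $\phi(v_2)$, a contradiction.

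There is, however, a shorter route that avoids both the induction and the case split. Because $u$ and $w$ are non-cut, the non-cut vertices of $B$ do not all carry the same list; by connectedness (or, in the clique case, trivially) one can choose adjacent non-cut vertices $y,z\in V(B)$ with $L(y)\neq L(z)$. Colour $y$ with some $c\in L(y)\setminus L(z)$ and delete $y$. In $G-y$ every vertex $v$ still satisfies $|L'(v)|\geq\deg_{G-y}(v)$, while the neighbour $z$ satisfies the strict inequality $|L'(z)|=|L(z)|>\deg_{G-y}(z)$; since $y$ is not a cut vertex, $G-y$ is connected, and a single application of Lemma~\ref{lem:ERT} finishes the entire proof in one stroke.
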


These two lemmas allow us to show that the graphs $H_5$ and $H_7$ in Figure \ref{fig:exceptions} are $(3,\epsilon,\alpha)$-reducible subgraphs under certain reasonable conditions.

\begin{lemma}
\label{lem:H5}
Let $\alpha \leq \frac{1}{10}$ and $\epsilon \leq \frac{1}{15}\alpha$. Suppose that
the graph $H_5$ in Figure \ref{fig:exceptions} is a subgraph of a graph $G$ 
for which each $s \in V(H_5)$ satisfies $\deg_G(s) = 3$. Then,
$H_5$ is a $(3,\epsilon,\alpha)$-reducible subgraph of $G$.
\end{lemma}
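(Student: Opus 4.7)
The plan is to take $S = V(H_5)$ as the reduction set. Fix a $3$-assignment $L$ on $G$ and suppose we have a $(3,\epsilon,\alpha)$-distribution $\mathcal{D}$ on $L$-colorings of $G \setminus V(H_5)$; the task is to extend $\mathcal{D}$ to a $(3,\epsilon,\alpha)$-distribution on $L$-colorings of $G$. Because $\deg_G(s)=3$ for every $s \in V(H_5)$, we have $\ell_{H_5,3}(v) = \deg_{H_5}(v)$, so after sampling $\phi \sim \mathcal D$ the residual list $L'(v) := L(v) \setminus \phi\bigl(N_G(v) \setminus V(H_5)\bigr)$ on $H_5$ satisfies $|L'(v)| \geq \deg_{H_5}(v)$ for every $v \in V(H_5)$. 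I would then condition on the shape of $L'$ and build an internal distribution $\psi$ on $H_5$ accordingly, finally outputting the combined random coloring $\phi \cup \psi$.

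In the ``good'' case --- when $|L'(v)| > \deg_{H_5}(v)$ somewhere, or when $(H_5, L')$ satisfies a structural criterion of Lemma \ref{lem:ERT} (an induced even cycle, an induced theta, or a non-clique non-odd-cycle block) --- there is a sufficiently rich family of proper $L'$-colorings of $H_5$; I would select $\psi$ as an appropriate convex combination guaranteeing that each $(v,c) \in V(H_5) \times L'(v)$ is realized with probability at least $\tfrac{1}{3}$. The ``bad'' case is when $L'(v)=\deg_{H_5}(v)$ pointwise and $(H_5,L')$ forms a rigid Gallai instance for which this direct strategy fails. Here I would invoke Lemma \ref{lem:Gallai}: the FORB property of $\mathcal{D}$ implies that with probability at least $\alpha$ we can condition on a specific boundary recoloring which forces two same-degree vertices of a terminal block of $H_5$ to receive distinct lists, restoring $L'$-choosability and a balanced distribution. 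Combining the two cases, each target pair $(v,c) \in V(H_5) \times L(v)$ is realized with probability at least the product of an $\alpha$-factor from FORB ensuring $c \in L'(v)$, a $\tfrac{1}{3}$-factor from the color choice inside $H_5$, and a $\tfrac{1}{5}$-factor absorbing the worst-case conditioning spread over the five vertices of $V(H_5)$; the product $\alpha \cdot \tfrac{1}{3} \cdot \tfrac{1}{5} = \alpha/15$ matches the hypothesis $\epsilon \leq \alpha/15$ exactly. The required (FORB) condition for the output distribution follows more easily, since one only needs to avoid a single color at a single vertex, and $\alpha \leq \tfrac{1}{10}$ leaves ample room.

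The main obstacle will be the finite but delicate combinatorial enumeration of the bad Gallai configurations that $(H_5, L')$ can take in the rigid branch. For each such configuration I must exhibit a boundary recoloring event of probability at least $\alpha$ under $\mathcal D$ whose conditioning yields a good $(H_5, L')$ via Lemma \ref{lem:Gallai}, while simultaneously preserving the FORB bounds of the global distribution and keeping the per-pair probabilities above $\epsilon$. This case-by-case verification --- essentially a check that the specific shape of $H_5$ in Figure \ref{fig:exceptions} admits no pathological Gallai obstruction that resists such a rescue --- is where the work and the tight constants $\alpha \leq \tfrac{1}{10}$ and $\epsilon \leq \alpha/15$ are spent.
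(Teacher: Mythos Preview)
Your overall shape is right --- take $S=V(H_5)$, note that only $x$ has an external neighbor so $L'(v)=L(v)$ for $v\neq x$, and aim for a product $\tfrac{1}{5}\cdot\tfrac{1}{3}\cdot\alpha$ --- but the good/bad dichotomy you set up is misconceived and hides the actual obstruction.

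First, $H_5$ is $2$-connected and is neither a clique nor an odd cycle, so Lemma~\ref{lem:ERT} applies to \emph{every} $L'$ with $|L'(v)|\ge\deg_{H_5}(v)$: your ``bad Gallai instance'' on $H_5$ never occurs. Second, and more seriously, your ``good case'' claim that one can always find a distribution on $L'$-colorings hitting every pair $(v,c)$ with probability $\ge\tfrac{1}{3}$ is false. Take $L'(v)=L'(z)=L'(w)=\{1,2,4\}$, $L'(y)=\{1,2,3\}$, $L'(x)=\{3,4\}$: since $v,w,z$ form a triangle they must use $\{1,2,4\}$ bijectively, and if $w=4$ then $v,z$ use $\{1,2\}$, forcing $y=3$ and then $x\notin\{3,4\}$, a contradiction. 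So \emph{no} proper $L'$-coloring assigns $w$ the color $4$; no convex combination can make $\Pr(\psi(w)=4)>0$. This is precisely why Lemma~\ref{lem:max3} excludes $H_5$, and why $H_5$ cannot be shown $(\ell_{H_5},3,\alpha)$-reductive.

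The paper's proof circumvents this differently. It picks $u\in V(H_5)$ uniformly, then $c$ uniformly from a \emph{restricted} list $L''(u)\subseteq L'(u)$, assigns $\phi(u)=c$, and extends arbitrarily to $H_5\setminus\{u\}$. The restriction removes at most one color from each of $L'(w)$ and $L'(y)$, chosen exactly so that the deleted graph $H_5\setminus\{u\}$ (which \emph{is} a Gallai tree when $u\in\{w,y\}$) remains $L'$-colorable; Lemmas~\ref{lem:ERT} and~\ref{lem:Gallai} are applied to $H_5\setminus\{u\}$, not to $H_5$. The crucial point is that which color gets removed from $L''(w)$ depends on $L'(x)$ and hence on $\phi(x')$: for any target color $c\in L(w)$ there is a single value $a$ such that $c$ is removed only when $\phi(x')=a$, and the (FORB) property of the external distribution gives $\Pr(\phi(x')\neq a)\ge\alpha$. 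That is where the $\alpha$-factor enters --- not to ensure $c\in L'(v)$, which is automatic for $v\neq x$, but to ensure the ``color $u$ first'' step does not strand the completion.
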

\begin{proof}
    We show that if each vertex $s \in V(H_5)$ has degree $3$ in $G$, then $H_5$ is a $(3,\epsilon,\alpha)$-reducible subgraph of $G$ with a reduction set $S = V(H_5)$. 
    Let $L$ be a $3$-assignment on $V(G)$, and suppose that $G \setminus H_5$ has a $(3,\epsilon,\alpha)$-distribution on $L$-colorings. 
    We randomly choose an $L$-coloring $\phi$ of $G \setminus H_5$ according to this distribution, and we extend $\phi$ to all of $V(G)$ as follows.
    We write $x'$ for the unique neighbor of $x$ in $G \setminus H$.
    We let $L'(x) = L(x) \setminus \phi(x')$, and we let $L'(s) = L(s)$ for each $s \in V(H_5) \setminus \{x\}$.
    Next, for each vertex $s \in V(H_5)$, we define a subset $L''(s) \subseteq L'(s)$ as follows. If there exist colors $a_1, a_2, a_3, a_4$ so that $L(v) = L(z) = \{a_1, a_2, a_4\}$, $L(y) = \{a_1, a_2, a_3\}$, and $L'(x) = \{a_3,a_4\}$, then we define $L''(w) = L'(w) \setminus \{a_4\}$. Otherwise, we define $L''(w) = L'(w)$.
    Symmetrically, if there exist colors $a_1, a_2, a_3, a_4$ so that $L(v) = L(z) = \{a_1, a_2,a_4\}$, $L(w) = \{a_1,a_2,a_3\}$, and $L'(x) = \{a_3,a_4\}$, then we define $L''(y) = L'(y) \setminus \{a_4\}$. Otherwise, we let $L''(y) = L'(y)$. For all other vertices $s \in V(H_5)$, we let $L''(s) = L'(s)$.

    Now, we choose a vertex $u \in V(H_5)$ uniformly at random and a color $c \in L''(u)$ uniformly at random, and we assign $\phi(u) = c$. We then delete the color $c$ from $L'(s)$ for all neighbors $s$ of $u$ in $H_5$. We claim that $H' = H_5 \setminus \{u\}$ is $L'$-colorable, and we finish extending our coloring $\phi$ to all of $G$ by assigning an arbitrary $L'$-coloring to $H'$. To show that $H'$ is $L'$-colorable, we consider several cases.
    \begin{enumerate}
        \item If $u \in \{v,x,z\}$, then after coloring $u$, $H' = H_5 \setminus \{u\}$ is isomorphic either to a diamond or a $C_4$. Furthermore, 
        $|L'(s)| \geq \deg_{H'}(s)$ for
        each vertex $s \in V(H')$, and each color in $L'(s)$ is available at $s$. Therefore, $H'$ is $L'$-colorable by Lemma \ref{lem:ERT}.
        \item If $u = w$, then Lemmas \ref{lem:ERT} and \ref{lem:Gallai} imply that $H'$ is not $L'$-colorable if and only if there exist colors $a_1, a_2, a_3$ for which $L'(v) = L'(z) = \{a_1,a_2\}$, $L'(y) = \{a_1,a_2,a_3\}$, and $L'(x) = \{a_3\}$. If this is the case, then $\phi(w) = a_4$ for some color $a_4$, and  $L'(v) = L'(z) = \{a_1, a_2, a_4\}$, $L'(y) = \{a_1, a_2, a_3\}$, and $L'(x) = \{a_3, a_4\}$. However, in this special case, $a_4 \not \in L''(w)$, contradicting the choice of $\phi(w) = a_4$. Therefore $H'$ is $L'$-colorable.
        \item If $u = y$, then this case is symmetric to the previous case.
    \end{enumerate}

    Now, we claim that for each $s \in V(H_5)$ and $c \in L(s)$, $\Pr(\phi(s) = c) \geq \frac{1}{15}\alpha$. 
    If $s \in \{v,z\}$, then the probability that $s$ is chosen to be colored first is $\frac{1}{5}$. Furthermore, $L''(s) = L(s)$, so the color $c$ is chosen with subsequent probability $\frac{1}{3}$, for an overall probability of $\frac{1}{15}$. If $s = x$, then $x$ is colored first with probability $\frac{1}{5}$. Then, $c \in L''(x)$ whenever $\phi(x') \neq c$, which occurs with probability at least $\alpha$. Then, $c$ is chosen from $L''(x)$ with subsequent probability at least $\frac{1}{3}$, for a total probability of at least $\frac{1}{15}\alpha$. If $s = w$, then $w$ is colored first with probability $\frac{1}{5}$. Next, $c \not \in L''(w)$ if and only if there exist colors $a_1, a_2, a_3, c$ such that $L'(v) = L'(z) = \{a_1, a_2, c\}$, $L'(y) = \{a_1,a_2,a_3\}$, and $L'(x) = \{a_3, c\}$. We observe that if $L'(v)$, $L'(z)$, and $L'(y)$ are fixed as above, then $c \in L''(w)$ whenever $L'(x) \neq \{a_3, c\}$.
    Hence, letting $a \in L(x) \setminus \{a_3, c\}$ be a fixed color, we see that $c \in L''(w)$ whenever $\phi(x') \neq a$. Therefore, $c \in L''(w)$ with probability at least $\alpha$. Then, $c$ is chosen from $L''(w)$ with subsequent probability at least $\frac{1}{3}$. Hence, the overall probability that $\phi(w) = c$ is at least $\frac{1}{15}\alpha$. Finally, if $s = y$, then by applying the same argument used with $w$, $\phi(y) = c$ with probability at least $\frac{1}{15}\alpha$.

    Finally, we consider a vertex $s \in V(H_5)$ and a color $c \in L(s)$, and we estimate the probability that $\phi(s) \neq c$. For the event $\phi(s) \neq c$ to occur, it is sufficient that $s$ is colored first and a color other than $c$ is chosen from $L''(s)$. Since $|L''(s)| \geq 2$, $\phi(s) \neq c$ with probability at least $\frac{1}{10}$. Hence, our randomly constructed $L$-colorings $\phi$ of $G$ form a $(3, \epsilon, \alpha)$-distribution, and hence $H$ is $(3,\epsilon,\alpha)$-reducible.
\end{proof}

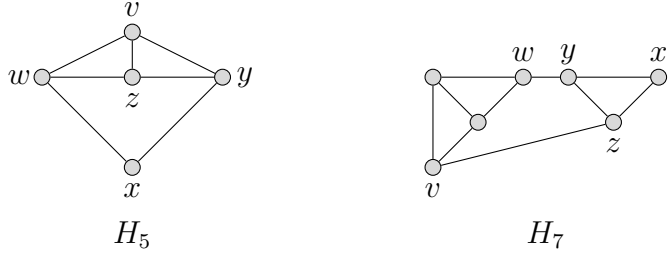
\begin{figure}
\begin{center}
\begin{tikzpicture}
[scale=1.2,auto=left,every node/.style={circle,fill=gray!30,minimum size = 6pt,inner sep=0pt}]
\node(z) at (0,-1.75) [draw=white,fill=white] {$H_5$};
\node(z) at (3,-1.25) [draw=white,fill=white] {};
\node(z) at (0,-1.25) [draw=white,fill=white] {$x$};
\node(z) at (-1.25,0) [draw=white,fill=white] {$w$};
\node(z) at (1.25,0) [draw=white,fill=white] {$y$};
\node(z) at (0,-0.25) [draw=white,fill=white] {$z$};
\node(z) at (0,0.75) [draw=white,fill=white] {$v$};
\node(p1) at (-1,0) [draw = black] {};
\node(p2) at (0,0) [draw = black] {};
\node(p3) at (1,0) [draw = black] {};
\node(p4) at (0,0.5) [draw = black] {};
\node(r) at (0,-1) [draw=black] {};
%\node(h1) at (-1,0) [draw=black] {};
%\node(h2) at (1,0) [draw = black] {};
\foreach \from/\to in {p1/p2,p2/p4,p2/p3,p3/p4,r/p1,p4/p1,r/p3}
    \draw (\from) -- (\to);
\end{tikzpicture}
\begin{tikzpicture}
[scale=1.2,auto=left,every node/.style={circle,fill=gray!30,minimum size = 6pt,inner sep=0pt}]
\node(z) at (0.75,-1.75) [draw=white,fill=white] {$H_7$};
\node(z) at (-0.5,-1.25) [draw=white,fill=white] {$v$};
\node(z) at (0.5,0.25) [draw=white,fill=white] {$w$};
\node(z) at (1,0.25) [draw=white,fill=white] {$y$};
\node(z) at (1.5,-0.75) [draw=white,fill=white] {$z$};
\node(z) at (2,0.25) [draw=white,fill=white] {$x$};
\node(p1) at (-0.5,0) [draw = black] {};
\node(p2) at (0,-0.5) [draw = black] {};
\node(p3) at (0.5,0) [draw = black] {};
\node(p4) at (1,0) [draw = black] {};
\node(p5) at (2,0) [draw = black] {};
\node(p6) at (1.5,-0.5) [draw = black] {};
\node(r) at (-0.5,-1) [draw=black] {};
%\node(h1) at (-1,0) [draw=black] {};
%\node(h2) at (1,0) [draw = black] {};
\foreach \from/\to in {p1/p2,p1/p3,p2/p3,p3/p4,p4/p5,p5/p6,p4/p6,r/p1,r/p2,r/p6}
    \draw (\from) -- (\to);
\end{tikzpicture}
\end{center}
\caption{The figure shows the graphs $H_5$ and $H_7$, which are $(3,\epsilon,\alpha)$-reducible under certain conditions.}
\label{fig:exceptions}
\end{figure}

\begin{lemma}
\label{lem:H7}
Let $\alpha \leq \frac{1}{14}$ and $\epsilon \leq \frac{1}{21}\alpha$. Suppose that
the graph $H_7$ in Figure \ref{fig:exceptions} is a subgraph of a graph $G$ 
such that each $s \in V(H_7)$ satisfies $\deg_G(s) = 3$. Then,
$H_7$ is a $(3,\epsilon,\alpha)$-reducible subgraph of $G$.
\end{lemma}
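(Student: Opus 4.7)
I will mirror the strategy of the proof of Lemma~\ref{lem:H5}, taking $S = V(H_7)$. Write $a, b$ for the two unlabeled vertices of $H_7$ in Figure~\ref{fig:exceptions}, so that $\{a, b, w\}$ and $\{x, y, z\}$ are the two triangles of $H_7$, joined by the edge $wy$, with $v$ adjacent to $\{a, b, z\}$. After sampling an $L$-coloring $\phi$ of $G \setminus H_7$ from the given $(3,\epsilon,\alpha)$-distribution, set $L'(x) = L(x) \setminus \{\phi(x')\}$, where $x'$ is the unique external neighbor of $x$, and $L'(s) = L(s)$ for all other $s \in V(H_7)$. The plan is then to choose $u \in V(H_7)$ uniformly at random, choose $c$ uniformly at random from a carefully chosen sublist $L''(u) \subseteq L'(u)$, color $u$ with $c$, and complete the coloring with any proper $L'$-coloring of $H_7 \setminus u$.

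\textbf{Existence of the extension.} The key combinatorial step is to show that $H_7 \setminus u$ is $L'$-choosable for every $u$, via Lemma~\ref{lem:ERT}. When $u \in \{a, b, x\}$, the graph $H_7 \setminus u$ is $2$-connected and is neither a clique nor an odd cycle; when $u \in \{y, z\}$, the diamond on $\{v, a, b, w\}$ is a block of $H_7 \setminus u$. In these five cases, condition~(2) of Lemma~\ref{lem:ERT} applies and no modification of $L'$ is needed, so $L''(u) = L'(u)$. The genuinely difficult cases are $u \in \{v, w\}$, where $H_7 \setminus u$ is a Gallai tree consisting of two triangles joined by a bridge; when every list-degree gap is zero, Lemma~\ref{lem:ERT} no longer suffices and I must fall back on Lemma~\ref{lem:Gallai}.

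\textbf{The Gallai-tree cases.} For $u = v$, the terminal blocks of $H_7 \setminus v$ are $\{a, b, w\}$ and $\{y, x, z\}$, with equal-degree pairs $(a, b)$ and $(x, z)$; Lemma~\ref{lem:Gallai} produces an $L'$-coloring unless simultaneously $L(a) = L(b)$ (with $c \in L(a)$) and $L(z) \setminus \{c\} = L(x) \setminus \{\phi(x')\}$. The second equation forces $c$ to be the unique element $c_v^\star := L(z) \setminus (L(x) \setminus \{\phi(x')\})$, which is well-defined only when $L(x) \setminus \{\phi(x')\} \subsetneq L(z)$. I therefore set $L''(v) = L'(v) \setminus \{c_v^\star\}$ when $L(a) = L(b)$ and $c_v^\star$ exists and belongs to $L(v)$, and $L''(v) = L'(v)$ otherwise; in either case $|L''(v)| \geq 2$. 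A symmetric construction, with $y$ in place of $z$, defines $L''(w)$ through a bad color $c_w^\star := L(y) \setminus (L(x) \setminus \{\phi(x')\})$.

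\textbf{Probability verification.} Since $|L''(u)| \geq 2$ for every $u$, the (FORB) condition is immediate: $\Pr(\phi(s) \neq c) \geq \frac{1}{7} \cdot \frac{1}{2} = \frac{1}{14} \geq \alpha$. For (FIX), picking $u = s$ contributes $\frac{1}{7} \cdot \frac{1}{|L''(s)|} \cdot \Pr(c \in L''(s))$ to $\Pr(\phi(s) = c)$. For $s \notin \{v, w, x\}$, $L''(s) = L(s)$, giving $\frac{1}{21}$. For $s = x$, $c \in L''(x)$ iff $\phi(x') \neq c$, which has probability at least $\alpha$ by the (FORB) property of the distribution on $G \setminus H_7$, yielding $\frac{\alpha}{14}$. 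For $s \in \{v, w\}$, the target $c$ equals the relevant bad color only when $\phi(x')$ takes a single specific value in $L(x)$, so again (FORB) on $G \setminus H_7$ gives $\Pr(c \in L''(s)) \geq \alpha$, yielding $\frac{\alpha}{21}$. All bounds meet $\epsilon \leq \frac{\alpha}{21}$, so the constructed distribution on $L$-colorings of $G$ is a $(3, \epsilon, \alpha)$-distribution. The main obstacle is the case analysis in the Gallai-tree situations $u \in \{v, w\}$: one must identify precisely when Lemma~\ref{lem:Gallai} fails and verify that removing a single bad color from $L'(u)$ repairs the situation in every list configuration.
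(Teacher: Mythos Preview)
Your proposal is correct and follows essentially the same approach as the paper: pick $u\in V(H_7)$ uniformly, color it from a carefully trimmed list $L''(u)$, and extend via Lemma~\ref{lem:ERT} or Lemma~\ref{lem:Gallai}; the paper handles the five ``easy'' cases uniformly by observing an induced theta subgraph, while you give the block structure explicitly, and the paper's $L''(v),L''(w)$ omit your extra condition $L(a)=L(b)$, but this is a harmless refinement. One small slip: for $s=x$ you claim $\tfrac{\alpha}{14}$, but since $|L''(x)|$ can be $3$ (when $\phi(x')\notin L(x)$) the correct lower bound is $\tfrac{1}{7}\cdot\tfrac{1}{3}\cdot\Pr(\phi(x')\neq c)\ge \tfrac{\alpha}{21}$, which is exactly the paper's bound and still meets $\epsilon\le\tfrac{\alpha}{21}$.
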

\begin{proof}
    We show that $H_7$ is a reducible subgraph of $G$ with a reduction set $S = V(H_7)$.
    Let $L$ be a $3$-assignment on $V(G)$, and suppose that $G \setminus H_7$ has a $(3,\epsilon,\alpha)$-distribution on $L$-colorings. We choose an $L$-coloring $\phi$ of $G \setminus H_7$ according to this distribution,
    and we extend $\phi$ to all of $V(G)$ as follows. 
    We write $x'$ for the unique neighbor of $x$ in $G \setminus H$.
    We let $L'(x) = L(x) \setminus \phi(x')$, and we let $L'(s) = L(s)$ for each $s \in V(H_5) \setminus \{x\}$.
    Next, we define a set $L''(s) \subseteq L'(s)$ as follows. If  $|L'(x)| = 2$ and $L'(v)$ contains a color $a$ such that $L'(z) \setminus \{a\} = L'(x)$, %{\RB should we specify $|L'(x)| = 2$} {\PB Yes},
    then we define $L''(v) = L'(v) \setminus \{a\}$. Otherwise, we define $L''(v) = L'(v)$.
    Similarly, if  $|L'(x)| = 2$ and $L'(w)$ contains a color $a$ for which $L'(y) \setminus \{a\}  = L'(x)$, then we define $L''(w) = L'(w) \setminus \{a\}$. Otherwise, we define $L''(w) = L'(w)$.
    We write $L''(s) = L'(s)$ for each other vertex $s \in V(H_7)$.
    Now, we choose a vertex $u \in V(H_7)$ uniformly at random, and then we choose a color $c \in L''(u)$ uniformly at random and assign $\phi(u) = c$. We delete $c$ from $L'(s)$ for all neighbors $s$ of $u$ in $H_7$.
    We define $H' = H_7 \setminus \{u\}$, and we observe that for each $s \in V(H')$, $|L'(s)| \geq \deg_{H'}(s)$. We claim that $H'$ is $L'$-colorable by considering the following cases.
    \begin{enumerate}
        \item If $u = v$, then by construction, $L'(z) \neq L'(x)$. Therefore, $H'$ is $L'$-colorable by Lemma \ref{lem:Gallai}.
        \item If $u = w$, then by construction, $L'(y) \neq L'(x)$. Therefore, $H'$ is $L'$-colorable by Lemma \ref{lem:Gallai}.
        \item In all other cases, $H'$ contains an induced 
        theta subgraph, so $H'$ is $L'$-colorable by Lemma \ref{lem:ERT}.
    \end{enumerate}
    Now, given a vertex $s \in V(H_7)$ and a color $c \in L(s)$, we estimate the probabilities that $\phi(s) = c$ and $\phi(s) \neq c$. 
    If $s \not \in \{v,w,x\}$, then $c \in L''(s) = L(s)$; hence, $\phi(u) = c$ in the event that $s$ is colored first and $c$ is chosen from $L''(s)$, which occurs with probability at least $\frac{1}{21}$. If $s = x$, then $c \in L''(x)$ whenever $\phi'(x') \neq c$. Then, $\phi(x) = c$ whenever $x$ is colored first and $c$ is chosen from $L''(x)$. Hence $\phi(x) = c$ with a probability of at least $\frac{1}{21}\alpha$.
    If $s = w$, then $c \not \in L''(w)$ if and only if $\phi(x')$ is the unique color $a$ for which $L(y) \setminus \{c\} = L(x) \setminus \{a\}$. Hence, $c \in L''(w)$ with probability at least $\alpha$, and hence using a similar argument, $\phi(w) = c$ with probability at least $\frac{1}{21}\alpha$.
    If $s = v$, then by using a symmetric argument, $\phi(v) = c$ with probability at least $\frac{1}{21} \alpha$.
    Furthermore, $\phi(s) \neq c$ whenever $s$ is colored first and $c$ is not chosen from $L''(s)$, which occurs with probability at least $\frac{1}{7} \cdot \frac{1}{2} = \frac{1}{14}$. Hence, our randomly constructed $L$-colorings of $G$ form a $(3,\epsilon,\alpha)$-distribution.
\end{proof}

The next lemma roughly shows that in order to check whether a graph $H$ is $(f,3,\alpha)$-reductive for some constant $\alpha > 0$, it is enough to  consider each terminal block of $H$ independently, as well as the graph obtained from $H$ by deleting each terminal block.

\begin{lemma}
    \label{lem:cut}
    Let $H$ be a graph, 
    and let $f:V(H) \rightarrow \mathbb N$ be a function.
    Let $\alpha, \beta > 0$. Suppose that the following hold:
    \begin{itemize}
        \item  $H$ is the union of subgraphs $H^*, H_1 \dots, H_t$;
        \item The sets $V(H_1 \setminus H^*), \ldots, V(H_t \setminus H^*)$ are pairwise disjoint and have no edges joining them;
        \item For $1 \leq i \leq t$, $V(H_i) \cap V(H^*)$ consists of a single vertex $v_i$; %that separates $H^* \setminus \{ v_i \}$ from $H_i \setminus \{ v_i\}$;
        \item For $1 \leq i \leq t$, $H_i$ is $(f,3,\alpha)$-reductive;
        \item $H^*$ is $(f,3,\beta)$-reductive.
    \end{itemize}
    %for each $f_{|H_i}$-assignment $L_i$ on $H_i$, there exists a probability distribution on $L_i$ colorings $\phi_i$ of $H_i$ such that for each $v \in V(H_i)$ and $c \in L(v)$, $\phi_i(v) = c$ with probability at least $\alpha$. 
    Then $H$ is $(f,3,\alpha \beta)$-reductive.
    %for every $f$-assignment $L$ on $G$, 
    %there exists a probability distribution on $L$-colorings $\phi$ of $G$ such that for each $v \in V(G)$ and $c \in L(v)$, the probability that $\phi(v) = c$ is at least $\alpha / 3$.
\end{lemma}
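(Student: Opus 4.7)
The plan is to construct, for each $f$-assignment $L$ on $H$, a two-stage random $L$-coloring of $H$. First, sample a proper $L$-coloring $\phi^*$ of $H^*$ from the distribution guaranteed by $(f,3,\beta)$-reductivity. Then, for each $i \in \{1,\dots,t\}$ independently, sample a proper $L$-coloring $\phi_i$ of $H_i$ from the distribution guaranteed by $(f,3,\alpha)$-reductivity, conditioned on the event $\phi_i(v_i) = \phi^*(v_i)$. This conditioning is well-defined because (FIX) on $H_i$ gives $\Pr(\phi_i(v_i) = c') \geq \alpha > 0$ for every $c' \in L(v_i)$. Define $\phi$ on $V(H)$ by $\phi|_{V(H^*)} = \phi^*$ and $\phi|_{V(H_i)} = \phi_i$; this is consistent at each cut vertex $v_i$ by construction. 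Since $E(H) = E(H^*) \cup \bigcup_i E(H_i)$ and the sets $V(H_i) \setminus V(H^*)$ are pairwise disjoint (so no edges of $H$ go between different $H_i \setminus H^*$'s), $\phi$ is a proper $L$-coloring of $H$.

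Next, I would verify (FIX) with parameter $\alpha\beta$. If $v \in V(H^*)$, then $\phi(v) = \phi^*(v)$, so $\Pr(\phi(v) = c) \geq \beta \geq \alpha\beta$, using $\alpha \leq 1/3 \leq 1$. The only real calculation is the case $v \in V(H_i) \setminus V(H^*)$. For such $v$ and $c \in L(v)$, conditioning on $\phi^*(v_i)$ and unfolding the definition of conditional probability gives
\begin{align*}
\Pr(\phi(v) = c) &= \sum_{c' \in L(v_i)} \Pr(\phi^*(v_i) = c') \cdot \Pr(\phi_i(v) = c \mid \phi_i(v_i) = c') \\
&= \sum_{c' \in L(v_i)} \frac{\Pr(\phi^*(v_i) = c')}{\Pr(\phi_i(v_i) = c')} \cdot \Pr\bigl(\phi_i(v) = c \land \phi_i(v_i) = c'\bigr).
\end{align*}
By (FIX) on $H^*$, the numerator $\Pr(\phi^*(v_i) = c')$ is at least $\beta$, while the denominator $\Pr(\phi_i(v_i) = c')$ is trivially at most $1$, so each ratio is at least $\beta$. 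Pulling this factor out of the sum and then applying (FIX) on $H_i$ yields
\[
\Pr(\phi(v) = c) \geq \beta \sum_{c'} \Pr\bigl(\phi_i(v) = c \land \phi_i(v_i) = c'\bigr) = \beta \Pr(\phi_i(v) = c) \geq \alpha \beta.
\]

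Finally, I would dispatch (FORB). Because $k = 3$, (FORB) only concerns sets $U \subseteq V(H)$ of size at most $1$, and as the paper observes immediately after Definition \ref{def:fixforb}, the one-vertex version of (FORB) follows from (FIX) whenever $f(u) \geq 2$; this holds for every $u \in V(H)$ since $u$ lies in some reductive piece. The main obstacle is the displayed computation above: it is essentially a Radon--Nikodym reweighting of the $\phi_i$-marginal at $v_i$ by the $\phi^*$-marginal, and the inequality succeeds because the $\phi^*$ marginal is bounded \emph{below} by $\beta$ at every color while the $\phi_i$ marginal is automatically bounded \emph{above} by $1$. No deeper structural argument is needed, but the bookkeeping with conditional probabilities is the delicate step and should be presented with care.
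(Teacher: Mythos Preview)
Your proposal is correct and follows essentially the same approach as the paper: sample $\phi^*$ on $H^*$, then independently sample each $\phi_i$ conditioned on agreeing with $\phi^*$ at the cut vertex $v_i$, and verify (FIX) by a law-of-total-probability computation (with (FORB) for $k=3$ reducing to (FIX) since $f\geq 2$). Your displayed inequality is in fact a slightly cleaner variant of the paper's computation---you decompose over colors $c'\in L(v_i)$ and bound the ratio $\Pr(\phi^*(v_i)=c')/\Pr(\phi_i(v_i)=c')\geq \beta$ directly, whereas the paper decomposes over colorings $\phi_0$ of $H_i$ with $\phi_0(v)=c$ and invokes Lemma~\ref{lem:conditional}; both reach $\alpha\beta$ by the same mechanism.
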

\begin{proof}
     We first observe that as all subgraphs $H^*$ and $H_i$ are $(f,3,\min\{\alpha,\beta\})$-reductive, it follows that $f(v) \geq 2$ for each $v \in V(H)$. Therefore, in order to show that $H$ is $(f,3, \alpha \beta)$-reductive,
     it suffices to show that for each $f$-assignment $L$ on $H$, we can find a probability distribution on $L$-colorings of $H$ satisfying the (FIX) condition.

    Consider an $f$-assignment $L$ on $V(H)$.
    As $H^*$ is $(f,3,\beta)$-reductive,
    we define a random variable $\phi^*$
    which assigns an
    $L$-coloring to $H^*$ 
    such that
    for each vertex $w \in V(H^*)$ and color $c \in L(w)$, $\Pr(\phi^*(w) = c) \geq \beta$. 
    For each value $1 \leq i \leq t$,  $H_i$ is $(f,3,\alpha)$-reductive,
    so we define a random variable $\phi_i$ 
    which assigns an $L$-coloring to $H_i$ such that for each vertex $w \in V(H_i)$ and color $c \in L(w)$, $\Pr(\phi_i(w) = c) \geq \alpha$. 

    Now, we randomly choose an $L$-coloring on $H$ as follows. First, we randomly choose an $L$-coloring $\phi^*$ of $H^*$.
    Next, for each subgraph $H_i \neq H^*$, 
    we assign $H_i$ an $L$-coloring
    using the conditional random variable $\phi_i | (\phi^*(v_i) = \phi_i(v_i))$.
    We then give $H$ an $L$-coloring by combining the colorings $\phi^*, \phi_1, \dots, \phi_t$.

    For each vertex $w \in V(H^*)$ and color $c' \in L(w)$, $\Pr(\phi^*(w) = c') \geq \beta \geq \alpha \beta$.
    Hence, we consider a subgraph $H_i$, a vertex $v \in V(H_i)$, and a color $c \in L(v)$. We observe that $c$ is assigned to $v$ with probability
\begin{eqnarray*}
    \Pr(\phi_i(v) = c | \phi^*(v_i) = \phi_i(v_i)) 
    & \geq & \Pr(\phi_i(v) = c \land \phi_i(v_i) = \phi^*(v_i)) \\ 
    &=& \Pr(\phi_i(v) = c) \Pr(\phi^*(v_i) = \phi_i(v_i)| \phi_i(v) = c) \\ 
    &\geq & \alpha \Pr(\phi^*(v_i) = \phi_i(v_i) | \phi_i(v) = c).
    \end{eqnarray*}
    Now, let $\Phi$ be the set of $L$-colorings $\phi_0$ of $H_i$ for which $\phi_0(v) = c$.
    Then, the event $\phi_i(v) = c$ is the disjoint union of the event set $\{\phi_i = \phi_0: \phi_0 \in \Phi\}$. Hence, Lemma \ref{lem:conditional} implies that 
    \begin{eqnarray*}
    \Pr(\phi_i(v) = c | \phi_i(v_i) = \phi^*(v_i)) 
    & \geq & \alpha \sum_{\phi_0 \in \Phi} \Pr(\phi^*(v_i) = \phi_i(v_i) | \phi_i = \phi_0) 
    \Pr(\phi_i = \phi_0| \phi_i(v) = c)     \\
    &=&  \alpha \sum_{\phi_0 \in \Phi} \Pr(\phi^*(v_i) = \phi_0(v_i)) \Pr(\phi_i = \phi_0| \phi_i(v) = c) \\
    &\geq & \alpha \beta \sum_{\phi_0 \in \Phi} \Pr(\phi_i = \phi_0| \phi_i(v) = c)  = \alpha \beta.
    \end{eqnarray*}
    Therefore,
    our distribution on $L$-colorings of $H$ satisfies (FIX), and thus
    $H$ is $(f,3,\alpha \beta)$-reductive, completing the proof.
\end{proof}

Next, we borrow a lemma from \cite{DNP} about flexible list colorings of paths.
\begin{lemma}[\cite{DNP}]
\label{lem:path-2}
    If $P$ is a path and $L$ is a $2$-assignment on $P$, then there exists a set $\Phi$ of exactly two $L$-colorings of $P$ such that for each $v \in V(P)$ and $c \in L(v)$, $\phi(v) = c$ for exactly one coloring $\phi \in \Phi$.
\end{lemma}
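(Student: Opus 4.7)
The plan is to prove Lemma \ref{lem:path-2} by induction on $|V(P)|$. For the base case $|V(P)| = 1$, if $L(v) = \{a,b\}$, I would take $\Phi$ to consist of the two colorings $\phi_1(v) = a$ and $\phi_2(v) = b$, so that each color of $L(v)$ is used by exactly one coloring in $\Phi$.

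For the inductive step, I would write $P = v_1 v_2 \cdots v_n$ with $n \geq 2$, set $P' = v_1 v_2 \cdots v_{n-1}$, and apply the induction hypothesis to $P'$ to obtain two proper $L$-colorings $\phi_1', \phi_2'$ of $P'$ such that $\{\phi_1'(v_i), \phi_2'(v_i)\} = L(v_i)$ for every $i \leq n-1$. I would then extend these colorings to $v_n$ by choosing $\phi_1(v_n), \phi_2(v_n) \in L(v_n)$ so that $\phi_j(v_n) \neq \phi_j'(v_{n-1})$ for each $j \in \{1,2\}$ and $\{\phi_1(v_n), \phi_2(v_n)\} = L(v_n)$. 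Once this is achieved, the extended colorings will be proper $L$-colorings of $P$ for which each color of $L(v_i)$ is used by exactly one coloring of $\Phi$ at each vertex $v_i$.

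The main task is to verify that such an extension always exists, which I would do by a brief case analysis based on how $\{\phi_1'(v_{n-1}), \phi_2'(v_{n-1})\}$ meets $L(v_n) = \{a,b\}$. If this intersection is empty, either bijection between the two pairs works. If exactly one color, say $a$, lies in the intersection with $a = \phi_j'(v_{n-1})$, then the assignment is forced: $\phi_j(v_n) = b$ and $\phi_{3-j}(v_n) = a$, which is valid since $\phi_{3-j}'(v_{n-1}) \neq \phi_j'(v_{n-1}) = a$. If both $a$ and $b$ lie in the intersection, then $L(v_{n-1}) = L(v_n) = \{a,b\}$, and I would set $\phi_j(v_n)$ to be the unique element of $L(v_n) \setminus \{\phi_j'(v_{n-1})\}$ for each $j$, which is again a proper extension. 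Since the argument is a straightforward case check, I anticipate no real obstacle; the value of the lemma lies not in its proof but in its later use as a building block for more elaborate reductive distributions.
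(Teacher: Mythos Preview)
Your inductive argument is correct and complete; the case analysis at $v_n$ handles all possibilities, and the key fact that $\phi_1'(v_{n-1}) \neq \phi_2'(v_{n-1})$ (guaranteed by the induction hypothesis) is exactly what makes each case go through. Note, however, that the paper does not supply its own proof of this lemma: it is quoted from \cite{DNP} without argument, so there is no in-paper proof to compare against. Your proof is the standard one and would serve perfectly well if the paper chose to include it.
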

Lemma \ref{lem:path-2} has a useful corollary.
\begin{lemma}
\label{lem:path-is-flex}
    Let $P$ be a path, and let $f:V(P) \rightarrow \{2,3\}$ be a function. Then, $P$ is $(f,3,\frac{1}{3})$-reductive.
\end{lemma}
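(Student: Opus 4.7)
The plan is to construct the required distribution by first randomly collapsing each $3$-element list to a $2$-element list and then invoking Lemma \ref{lem:path-2}. Since $k=3$ and $f(v) \geq 2$ for every vertex, the remark following Definition \ref{def:fixforb} tells me that (FORB) is automatic once (FIX) holds. Thus I only need to exhibit, for each $f$-assignment $L$ on $P$, a distribution on proper $L$-colorings $\phi$ satisfying $\Pr(\phi(v)=c) \geq \frac{1}{3}$ for every $v \in V(P)$ and $c \in L(v)$.

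Given $L$, I first build a random $2$-assignment $L'$ on $P$ as follows: for each $v$ with $f(v)=3$, independently and uniformly delete one of the three colors of $L(v)$, and for each $v$ with $f(v)=2$ set $L'(v)=L(v)$. Then I apply Lemma \ref{lem:path-2} to $L'$ to obtain the pair $\{\phi_1,\phi_2\}$ of $L'$-colorings in which every $c \in L'(v)$ is used at $v$ by exactly one of $\phi_1, \phi_2$; this pair is deterministic once $L'$ is fixed. Finally, I select $\phi$ uniformly from $\{\phi_1,\phi_2\}$, independently of the earlier randomization.

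To verify (FIX), fix $v \in V(P)$ and $c \in L(v)$. If $f(v)=2$, then $L'(v)=L(v)$ contains $c$ deterministically, and for every realization of $L'$ exactly one of $\phi_1, \phi_2$ assigns $c$ to $v$, so $\Pr(\phi(v)=c)=\frac{1}{2} \geq \frac{1}{3}$. If $f(v)=3$, I condition on the reductions at all vertices other than $v$; the independent reduction at $v$ keeps $c$ in $L'(v)$ with probability $\frac{2}{3}$, and conditional on $c \in L'(v)$ (for any resulting $L'$) the uniform choice between $\phi_1$ and $\phi_2$ selects $c$ at $v$ with probability $\frac{1}{2}$. Hence the conditional probability is $\frac{2}{3} \cdot \frac{1}{2} = \frac{1}{3}$, and averaging over the reductions at the other vertices preserves this, giving (FIX) with $\alpha = \frac{1}{3}$.

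No serious obstacle is expected. The only subtlety is disentangling the two independent sources of randomness, namely the random choice of $L'$ and the subsequent uniform selection from $\{\phi_1,\phi_2\}$; this is clean because $\{\phi_1,\phi_2\}$ is a deterministic function of $L'$ by Lemma \ref{lem:path-2}, so the joint distribution factors as desired.
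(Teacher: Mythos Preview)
Your proof is correct and takes essentially the same approach as the paper: randomly reduce each list to size $2$, then apply Lemma~\ref{lem:path-2} and choose one of the two resulting colorings uniformly, yielding $\Pr(\phi(v)=c)\geq\frac{2}{3}\cdot\frac{1}{2}=\frac{1}{3}$. The paper's write-up is terser but the argument is identical, including the implicit appeal to the observation after Definition~\ref{def:fixforb} that (FIX) forces (FORB) when $k=3$ and $f\geq 2$.
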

\begin{proof}
    Let $L$ be an $f$-assignment on $V(P)$. For each vertex $p \in V(P)$, we define a list $L'(p) \subseteq L(p)$ by taking a size $2$ subset of $L(p)$ uniformly at random.
    Then, by Lemma \ref{lem:path-2}, we  randomly choose an $L'$-coloring $\phi$ of $P$ so that for each vertex $p \in V(P)$ and each color $c' \in L'(p)$, 
    $\phi(p) = c'$ with probability $\frac{1}{2}$. Since each color $c \in L(p)$ appears in $L'(p)$ with probability at least $\frac{2}{3}$, it follows that $\phi(p) = c$ with probability at least $\frac{1}{3}$.
\end{proof}

Our final lemma proves that a $2$-connected graph $H$ of maximum degree $3$ is $(f,3,3^{-8})$-reductive under certain reasonable conditions on $H$ and $f$. 
The proof of the lemma follows the proof of \cite[Theorem 2.4]{BMS} very closely. 

\begin{lemma}
\label{lem:max3}
    Let $H$ be a $2$-connected graph of maximum degree $3$, and let $f:V(H) \rightarrow \mathbb N$ be a function. 
    Suppose that there exists a subset $X \subseteq V(H)$ of at most $1$ vertex so that
     the following properties are satisfied:
    \begin{itemize}
        \item $H$ is isomorphic to none of the following: $K_4^-, K_4, H_5, H_7$;
        \item $f(v) = 3$ for each $v \in V(H) \setminus X$;
        \item $\deg_H(x) = f(x) = 2$ for $x \in X$;
        %\item $H$ contains at least one vertex $b$ for which $\deg(b) = 2$ and $f(b) = 3$.
    \end{itemize}
    Then,  $H$ is $(f,3,3^{-8})$-reductive.
\end{lemma}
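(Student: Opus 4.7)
The plan is to construct, for every $f$-assignment $L$ on $H$, a probability distribution on proper $L$-colorings $\phi$ of $H$ such that $\Pr(\phi(v) = c) \geq 3^{-8}$ for every $v \in V(H)$ and every $c \in L(v)$; this is (FIX), and since $k=3$ and $f(v) \geq 2$ everywhere, (FIX) automatically implies (FORB), as already noted after Definition \ref{def:fixforb}. The overall approach will closely mirror that of \cite[Theorem 2.4]{BMS}.

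As a preliminary step, I would record that $H$ is $f$-choosable by Lemma \ref{lem:ERT}. Since $H$ is $2$-connected and $H \neq K_4$, either $H$ is itself a single block that is not a clique and not an odd cycle (second bullet of ERT), or $H$ is an odd cycle; in the odd-cycle case, since $|X| \leq 1$ and $\deg_H(v) = 2$ throughout, some $v \notin X$ has $f(v) = 3 > 2 = \deg_H(v)$, so the first bullet of ERT applies.

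To verify (FIX), I would fix an arbitrary target pair $(v_0, c_0)$ with $v_0 \in V(H)$, $c_0 \in L(v_0)$, and describe a randomized procedure producing an $L$-coloring $\phi$ of $H$ with $\Pr(\phi(v_0) = c_0) \geq 3^{-8}$. The procedure selects a small ``pivot'' set $F \subseteq V(H)$ containing $v_0$ with $|F| \leq 8$, chosen based on the local structure of $H$ near $v_0$ (the length of the shortest cycle through $v_0$, the location of the special vertex $x$ when $|X|=1$, and the configuration of $v_0$'s at most three neighbors). It then picks a random proper $L$-coloring $\psi$ of $H[F]$ uniformly from a carefully chosen nonempty family, so that any particular proper coloring arises with probability at least $3^{-|F|} \geq 3^{-8}$, in particular any $\psi$ with $\psi(v_0) = c_0$. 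Finally, it extends $\psi$ to an $L$-coloring of $H$ using any $L'$-coloring of $H \setminus F$, where $L'(w) := L(w) \setminus \{\psi(u) : u \in N_H(w) \cap F\}$.

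The main obstacle is showing that, no matter which $\psi$ is sampled, the residual list assignment $L'$ admits an $L'$-coloring of $H \setminus F$. By construction $|L'(w)| \geq \deg_{H \setminus F}(w)$ for each $w \in V(H) \setminus F$, so Lemmas \ref{lem:ERT} and \ref{lem:Gallai} apply except when the residual graph is a Gallai tree with all of these inequalities tight and no distinguishing list asymmetry in any terminal block. The four excluded subgraphs $K_4^-$, $K_4$, $H_5$, $H_7$ are calibrated precisely so that this worst case can always be avoided by an appropriate choice of $F$: they are exactly the $2$-connected subcubic configurations for which no bounded-size pivot set would leave an ERT-colorable residual graph. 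The remaining work is a case analysis organized by the local structure at $v_0$ and by whether $X = \emptyset$ or $X = \{x\}$, verifying in each non-excluded case that $|F| \leq 8$ pivots suffice and that Lemma \ref{lem:ERT} (or Lemma \ref{lem:Gallai}) is applicable to the residue; this is the technical heart of the argument and directly parallels the case analysis of \cite[Theorem 2.4]{BMS}.
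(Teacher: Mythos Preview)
Your outline has a structural gap: the distribution you build depends on the target vertex $v_0$ (your pivot set $F$ is ``chosen based on the local structure of $H$ near $v_0$''), whereas (FIX) demands a \emph{single} distribution on $L$-colorings of $H$ that simultaneously gives $\Pr(\phi(v)=c)\geq 3^{-8}$ for every pair $(v,c)$. What you have sketched is, at best, a family $\{\mathcal D_{v_0}\}_{v_0\in V(H)}$ with $\Pr_{\mathcal D_{v_0}}(\phi(v_0)=c_0)\geq 3^{-8}$; mixing these uniformly over $v_0$ loses a factor of $|V(H)|$, which is unbounded, so the bound collapses. This is not a cosmetic issue: the whole point of the lemma (and of the generalized framework in Section~\ref{sec:framework}) is to handle reducible subgraphs of arbitrary size with a uniform constant $\alpha$, so any argument that localizes around a single target vertex cannot work without an additional global idea.

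The paper's proof supplies exactly that global idea, and it is not a local pivot-set case analysis. One fixes a greedy distance-$8$ coloring $\psi$ of $H$ (using at most $766$ colors), picks a color class $R$ of $\psi$ uniformly at random, and then colors the vertices of (a slight modification of) $R$ independently and uniformly from their lists before extending to the rest of $H$. Because $R$ is a random class of a bounded palette, \emph{every} vertex lands in $R$ with probability at least $1/766>3^{-7}$, uniformly and without reference to any target vertex; this is what yields a single distribution satisfying (FIX). The exclusions $K_4^-,K_4,H_5,H_7$ are not ``configurations for which no bounded pivot set works''; rather, they are exactly the small $2$-connected subcubic graphs for which the bad-component analysis of $H\setminus R$ (Claims~\ref{claim:two-blocks}--\ref{claim:1r}) breaks down. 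Your proposed case analysis does not parallel this and, as written, would not prove the lemma.
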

%We note that if $H$ is isomorphic to a diamond but all other conditions of the lemma hold, then the lemma's conclusion may be false. Indeed, if $H$ is a diamond and $X$ contains a single  vertex of degree $2$, then a single coloring preference at
%the vertex $v \in V(H) \setminus X$ satisfying $\deg_H(v) = 2$ cannot necessarily be satisfied. 
\begin{proof}
    Since $H$ is $2$-connected and has maximum degree $3$, every vertex of $H$ has degree $2$ or $3$.
    %Since $H$ contains a vertex $b$ satisfying $\deg_H(b) = 2$, $H$ is not isomorphic to $K_4$. Thus,
    %since $H$ is connected and has maximum degree $3$, $H$ has no $K_4$ subgraph.
    Let $L$ be an $f$-assignment on $H$.
    By Lemma \ref{lem:ERT}, $H$ is $L$-colorable. We aim to construct a probability distribution on $L$-colorings of $H$
    so that for each vertex $v \in V(H)$ and color $c \in L(v)$, $c$ is assigned to $v$ with probability at least $3^{-8}$.
    We describe a random procedure for producing an $L$-coloring $\phi$ of $H$.
    First, we assume that $V(H)$ has a predetermined linear order.
    Using this linear order, we 
    give $V(H)$ a first-fit (greedy) coloring $\psi$
    which satisfies the property that two vertices within distance $8$ of each other receive distinct colors.
    Since $H$ has maximum degree $3$, $\psi$ uses at most $3 \cdot (2^8 - 1) + 1=766$ colors.

    Next, we choose a color class $R$ of $\psi$ uniformly at random. We define the subgraph $H'  = H \setminus R$ and a function $h:V(H') \rightarrow \mathbb N$ so that $h(v) = f(v) - 1$ if $v$ has a neighbor in $R$ and $h(v) = f(v)$ otherwise.
    Since each pair of vertices in $R$ has a mutual distance of at least $9$, each vertex $v \in V(H')$ has at most one neighbor in $R$, and hence $h(v) \geq \deg_{H'}(v)$.
    %Furthermore, if $b \in V(H')$ is a vertex 
    %satisfying 
    %$\deg_H(b) = 2$ and
    %$f(b) = 3$, then $h(b) > \deg_{H'}(b)$.

    For each component $C$ of $H'$, we say that $C$ is a \emph{good component} if $C$ is $h$-choosable; otherwise, we say that $C$ is a \emph{bad component}. We make several claims about the structure of the bad components in $H'$.
    
    \begin{claim}
    \label{claim:two-blocks}
        Each bad component of $H'$ has at least two blocks.
    \end{claim}
    \begin{proof}
        Suppose that $H'$ has some bad component $C$ which is a single block.
        If $C$ is isomorphic to $K_2$, then as $C$ is a bad component, both vertices $v \in V(C)$ satisfy $\deg_H(v) = f(v) = 2$, a contradiction. Therefore, $C$ is $2$-connected, and by Lemma \ref{lem:ERT},
        $C$ is a cycle of an odd length $q \geq 3$ such that $h(v) = 2$ for each $v \in V(C)$.
        Since $V(H)$ has 
        at most one vertex $x$ satisfying $\deg_H(x) = f(x) = 2$,
        it follows that
        at least $q-1$ vertices of $C$ have a neighbor in $R$. 
        Note that if two distinct vertices of $R$ have a neighbor in $C$, then we can find distinct vertices $r,r' \in R$ at distance at most $3$ in $H$, a contradiction. Therefore, there exists a unique vertex $r \in R$ for which $r$ has a neighbor in $C$. 
        
        Now, we consider two cases.
        First, suppose that all $q$ vertices in $C$ are adjacent to $r$. If $q = 3$, 
        then $H[V(C) \cup \{r\}]$ is a $K_4$. As $H$ has maximum degree $3$, it follows that $H$ is isomorphic to $K_4$, a contradiction. If $ q \geq 5$, then $\deg_H(r) \geq 5$, a contradiction.

        Next, suppose that exactly $q-1$ vertices of $C$ are adjacent to $r$. 
        If $q \geq 5$, then $\deg_H(r) \geq 4$, a contradiction. If $q = 3$, then we use the fact that
        $\left (\bigcup_{v \in V(C)} N(v) \right ) \setminus V(C) = \{r\}$ as follows. If $\deg_H(r) = 3$, then $r$ is a cut vertex in $H$, contradicting the $2$-connectivity of $H$. If $\deg_H(r) = 2$, then $H$ is isomorphic to $K_4^-$, a contradiction. Therefore, $C$ has at least two blocks.
    \end{proof}

    Using Claim \ref{claim:two-blocks}, we can prove an even stronger claim.
    \begin{claim}
    \label{claim:three-blocks}
        Every bad component of $H'$ has at least three terminal blocks.
    \end{claim}
    \begin{proof}  
        Consider a bad component $C$ of $H'$.
        By Claim \ref{claim:two-blocks}, the block-cut tree of $C$ is not a single vertex, so $C$ has at least two terminal blocks. In order to obtain a contradiction, we assume that $C$ has exactly two terminal blocks.
        Since $C$ is a bad component, 
        Lemma \ref{lem:ERT} implies that each vertex $v \in V(C)$ satisfies $\deg_C(v) = h(v)$.

         Let $R_C \subseteq R$ be the set of vertices in $R$ with a neighbor in $C \setminus \{x\}$.
        We first argue that $|R_C| = 1$.
        As each terminal block of $C$ has a vertex with a neighbor in $R_C$,
        $|R_C| \geq 1$.
        To show that $|R_C| = 1$,        we define a graph $A_3$ whose vertex set consists of those vertices $v \in V(C)$ satisfying $\deg_C(v) \leq 2$.
        We let two vertices $u,v \in V(A_3)$ be adjacent in $A_3$ if and only if $\dist_C(u,v) \leq 3$. 
        %Note that since $C$ is a bad component, each vertex $v \in V(A_3)$
        %satisfies $h(v) = \deg_C(v)$. 
        
        We claim that $A_3$ is connected.
        To prove this claim, we first observe that as $C$ has exactly two terminal blocks, the block-cut tree of $C$ is a path. 
        We let $B_0$ be a terminal block of $C$, and we label the blocks of $C$ as $B_0, B_1, \dots, B_t$, where $B_t$ is the second terminal block of $C$, and blocks $B_i$ and $B_{i+1}$ are joined by a cut vertex.
        Since each block $B_i$ of $C$ is an odd cycle or a $K_2$, if a component $A$ of $A_3$ contains one vertex in $V(A_3) \cap V(B_i)$, then $A$ contains every vertex in $V(A_3) \cap V(B_i)$.

        %Now, suppose that $A_3$ is not connected.
        Since $B_0$ is a terminal block of $C$,
        Lemma \ref{lem:ERT} tells us that $B_0$ is isomorphic to $K_2$ or 
        an odd cycle, so
        $B_0$ contains a vertex $v_0$ satisfying $\deg_C(v_0) \leq  2$.
        We let $A$ be the component of $A_3$ containing
        $v_0$. We prove by induction on $m$ that $A$ contains every vertex in $V(A_3) \cap V(B_m)$ for each value $0 \leq m \leq t$, which implies that $A_3$ is connected. For our base case, when $m = 0$, there is nothing to prove. Now, suppose that $m \geq 1$.
        If each vertex $v \in V(B_m)$ satisfies $\deg_C(v) = 3$, then we are done.
        Otherwise, some vertex $v \in V(B_m)$ satisfies $\deg_C(v) \leq 2$.
        We choose $v$ to be 
        within distance $1$ of $B_{m-1}$ in $C$.
        In order to show that $A$ contains every vertex in $V(B_m) \cap V(A_3)$, it is enough to show that $v \in V(A)$.

        If $B_{m-1}$ is an odd cycle, then 
        as $H$ has maximum degree $3$,
        $B_m \cong K_2$. Hence,
        $B_{m-1}$ contains a vertex $u$ satisfying $\deg_C(u) \leq 2$ at distance at most $2$ from $v$. By the induction hypothesis, $u \in V(A)$, so $v \in V(A)$.
        If $B_{m-1}$ is a $K_2$ containing a vertex $u$ satisfying $\deg_C(u) \leq 2$, then $u$ and $v$ are at distance at most $2$. Since $u \in V(A)$ by the induction hypothesis, $v \in V(A)$. If $B_{m-1}$ is a $K_2$ whose vertices all have degree at least $3$ in $C$,
        then $m \geq 2$, and $B_{m-2}$ and $B_m$ are both odd cycles.
        Then, we find a vertex $u \in V(B_{m-2})$ satisfying $\deg_C(u) = 2$ for which $\dist_C(u,v) \leq 3$. As $u \in V(A)$ by the induction hypothesis,
        $v \in V(A)$. Therefore, $A$ contains every vertex in $V(B_m) \cap V(A_3)$ for $0 \leq m \leq t$, and hence $A_3$ is connected.

        %{\PB If $x$ has a neighbor in $R_C$, then every vertex of $A_3$ has a neighbor in $R_C$.}
        Now, to finish our proof that $|R_C| = 1$, we define a graph $A_6$ on $V(A_3)$, so that two vertices $u,v \in V(A_3)$ are adjacent in $A_6$ if and only if $\dist_C(u,v) \leq 6$. We observe that the square graph $A_3^2$ is a subgraph of $A_6$. Since $A_3$ is connected, it follows that $A_6$ is $2$-connected. 
        We then obtain a graph $A_6'$ 
        by deleting a vertex $x \in V(A_6)$
        if and only if $x \in X$. 
        Since at most one such vertex $x \in X$ exists,
        $A_6'$ is a connected graph. Furthermore, for each vertex $v \in V(A_6')$, $v$ has a neighbor in $R_C$, and each vertex in $R_C$ has a neighbor in $A_6'$.
        % {\RB is it possible for there to exist $r \in R_C$ such that the neighborhood of $r$ in $C$ is only $\{x\}$?}.
        % {\PB No. I am using the general fact that the square of a connected graph is $2$-connected and therefore has min deg at least $2$. Maybe this is worth citing.}
        If $|R_C| \geq 2$, then as $A_6'$ is connected, there exist vertices $r,r' \in R_C$ with adjacent neighbors in $A_6'$, implying that $\dist_H(r,r') \leq 8$, a contradiction.
        Therefore, $|R_C| =1$.

        Finally, as $|R_C| = 1$, $R_C$ has at most three neighbors in $C$. 
        Since $C$ contains at most one vertex $x$ satisfying $\deg_H(x) = 2$, it follows that $\sum_{v \in V(C)} (3-\deg_C(v)) \leq 4$. As each terminal block of $C$ is a $K_2$ or an odd cycle, each terminal block of $C$ contributes at least $2$ to this sum, so we conclude that $\sum_{v \in V(C)} (3-\deg_C(v)) =4$ and that for each non-terminal block $B$ of $C$, every vertex $v \in V(B)$ satisfies $\deg_C(v) = 3$. Using the argument that proves that $A_3$ is connected, each pair of adjacent non-terminal blocks of $C$ contains 
        a vertex $v$ for which $\deg_C(v) =2$, so we conclude that $C$ has at most one non-terminal block. Then, as $\sum_{v \in V(C)} (3-\deg_C(v)) = 4$, it follows that the two terminal blocks of $C$ are isomorphic either to $K_3$ and $K_2$ or to $K_3$ and $K_3$.
        In the first case, $C$ is isomorphic to $H_5 \setminus \{w\}$, and in the second case, $C$ is isomorphic to $H_7 \setminus \{v\}$. Then, as $H$ is $2$-connected and contains no vertex of degree $1$ and at most one vertex of degree $2$, it follows that $H$ is isomorphic to $H_5$ or $H_7$, a contradiction. 
        Therefore, $C$ has at least three terminal blocks.
    \end{proof}

    For a bad component $C$ of $H'$, we say that a terminal block $B$ of $C$ is \emph{nice} if
    $V(B) \cap X = \emptyset$.

    \begin{claim}
    \label{claim:1r}
        If $C$ is a bad component of $H'$ and $B$ is a nice terminal block of $C$, then $B$ has exactly one neighbor $r \in R$.
        Furthermore, 
        if $r$ has a neighbor in a nice terminal block $B'$ of a bad component of $H'$, then $B' = B$.
    \end{claim}
    \begin{proof}
        Let $C$ be a bad component of $H'$, and let $B$ be a nice terminal block of $H'$. Since $B$ is nice, $B$ is an odd cycle of length at least $3$, and all vertices $v \in V(B)$
        except one
        satisfy $\deg_C(v) = 2$.
        Therefore, the vertices of $V(B)$
        have a single neighbor $r \in R$, as otherwise, two distinct vertices in $R$ have mutual distance at most $3$, a contradiction.

        Now, suppose that $r$ has a neighbor $u$ in a nice terminal block $B'$ of some bad component $C'$ of $H'$, and suppose that $B' \neq B$.
        Since $\deg_H(r) \leq 3$, $r$ has exactly one neighbor 
        $v' \in V(B')$. However, $v'$ has a neighbor $u' \in V(B')$ satisfying $\deg_{C'}(u') = 2$, which in turn has
        a neighbor $r' \in R$ distinct from $r$. Then, $r$ and $r'$ have a mutual distance of at most $3$, a contradiction.
    \end{proof}

    Now, 
    we initialize a set $R' = R$.
    For each bad component $C$ of $H'$, we choose a nice terminal block $B_C$ of $C$
    uniformly at random. Then, 
    we let $r$ be the unique neighbor $r \in R$ of $B_C$, and we update $R' \leftarrow R' \setminus \{r\}$.
    After repeating this process for each bad component $C$,
    we define a new function $h':V(H \setminus R') \rightarrow \mathbb N$ so that $h'(v) = f(v) -  1$ 
    if $v$ has a neighbor in $R'$ and $h'(v) = f(v)$ otherwise.
    Each vertex $v \in V(H) \setminus R'$ satisfies
$h'(v) \geq \deg_{H \setminus R'}(v)$,
    and if $v \in V(H) \setminus R'$ satisfies $\deg_H(v) = 2$ and $f(v) = 3$, then 
    $h'(v) > \deg_{H \setminus R'}(v)$.
    We now say that a component $C$ of $H \setminus R'$ is \emph{good} if $C$ is $h'$-choosable.
    We argue that every component of $H \setminus R'$ is good.

    Let $C'$ be a component of $H \setminus R'$, and let $C$ be a component of $H'$ which is a subgraph of $C'$. If $C$ is a good component in $H'$, then Lemma \ref{lem:ERT} tells us that $C$ either contains a vertex $v$ satisfying 
    $f(v) > \deg_H(v)$, or $C$ contains an induced even cycle or theta subgraph. In both cases, $C'$ is a good component in $H \setminus R'$. 
    %{\RB To check my understanding, is this correct logic for the above two sentences: If $C$ is a good component in $H'$, then Lemma \ref{lem:ERT} tells us that there exists a vertex $v \in V(C)$ such that $h(v) > \deg_H'(v)$ or $C$ contains an induced even cycle or theta subgraph. Then $h'(v) > \deg_{H \setminus R'}(v)$ or $C'$ contains an induced even cycle or theta subgraph, so $C'$ is a good component in $H \setminus R'$.}
    %{\PB Yes, your understanding is correct.}
    If $C$ is a bad component of $H'$, then by Claim \ref{claim:1r}, $B_C$ has exactly one neighbor $r \in R$, and hence $B_C$ is a $K_3$ with vertices $u,v,w$ such that $\deg_C(w) = 3$ and $\deg_C(u) = \deg_C(v) = 2$.
    Then, $V(B_C) \cup \{r\}$ induces a diamond subgraph of $C'$, which implies by Lemma \ref{lem:ERT} that $C'$ is a good component of $H \setminus R'$.

    Finally,
    to produce our $L$-coloring of $H$,
    we first choose a color $c \in L(r)$ uniformly at random
    for each vertex $r \in R'$, and we assign $\phi(r) = c$. Then, since each uncolored component of $H \setminus R'$ is good, we complete an $L$-coloring on the remaining vertices of $H$.

    By Claims \ref{claim:three-blocks} and \ref{claim:1r},
    a vertex $r \in R$ belongs to $R'$ with probability at least $\frac{1}{2}$.
    Therefore,
    each vertex $v \in V(H)$ belongs to the set $R'$ with probability at least  $\frac{1}{2} \cdot \frac{1}{766} = \frac{1}{1532} > 3^{-7}$.
    %$\frac{1}{2} \cdot \frac{1}{385} = \frac{1}{770} > 3^{-7}$.
    Given that $v \in R'$, each color $c \in L(v)$ is assigned to $v$ with probability at least $\frac{1}{3}$. Therefore, $H$ is $(f,3,3^{-8})$-reductive.
\end{proof}

\section{Proof of Theorem \ref{thm:mad3}}
\label{sec:discharging}
The goal of this section is to prove that there exists a constant $\epsilon > 2^{-30}$ such that if $G$ is a graph of maximum average degree less than $3$, then $G$ is 
weighted
$\epsilon$-flexibly $3$-choosable, thereby proving Theorem \ref{thm:mad3}. The main tool for our proof is Lemma \ref{lem:main-k-red}.

We fix a graph $G$ satisfying $\mad(G) < 3$.
In order to prove that $G$ is weighted 
$\epsilon$-flexibly $3$-choosable, we may consider each component of $G$ separately, so we assume that $G$ is connected.
%Hence, whenever we refer to definitions of strong $k$-reducibility, we will assume that $k = 3$. 
For ease of notation, if $H$ is an induced subgraph of $G$, then we write $\ell_H(v) = \ell_{H,3}(v)
 = 3 - \deg_G(v) + \deg_H(v)$ for each vertex $v \in V(G)$.
In this way, if $L$ is a $3$-assignment on $G$
and $H$ is a subgraph of $G$, then for any $L$-coloring of $G \setminus H$, $\ell_H(v)$ gives a lower bound on the number of available colors in $L(v)$ for each vertex $v \in V(H)$.

We write $\alpha = 3^{-9}$ and $\epsilon = 
(\frac{2\alpha}{3})^{2} > 2^{-30}$.
By Lemma \ref{lem:main-k-red}, 
in order to prove that $G$ is $\epsilon$-flexibly $3$-choosable,
it suffices to show that every induced subgraph of $G$ contains an induced  $(3,\epsilon,\alpha)$-reducible subgraph.
In order to show that a given induced subgraph $H$ of $G$
is $(3,\epsilon,\alpha)$-reducible,
we typically check that 
for every $\ell_H$-assignment $L$ on $H$, there exists a probability distribution on $L$-colorings $\phi$ of $H$ such that for each $v \in  V(H)$ and $c \in L(v)$, the probability that $\phi(v) = c$ is at least $\alpha$,
and we also check that $\ell_H(v) \geq 2$ for each $v \in V(H)$.
As previously observed,
these two conditions
imply that $H$ is $(\ell_H,3,\alpha)$-reductive
and thus imply that $H$ is $(3,\epsilon,\alpha)$-reducible.
Alternatively, we may also show that $H$ is a terminal block of $G$ which is $(3,3,\frac{1}{3})$-reductive, which also implies that $H$ is $(3,\epsilon,\alpha)$-reducible.

Note that
every induced subgraph of $G$ has maximum average degree less than $3$. Hence, as $G$ is an arbitrary graph satisfying $\mad(G) < 3$, 
in order to prove that every induced subgraph of $G$ has an induced subgraph $H$ which is $(3,\epsilon,\alpha)$-reducible,
it suffices only to prove that $G$ contains 
an induced subgraph $H$ which is  $(3,\epsilon,\alpha)$-reducible. To prove this claim, we assume the contrary, which eventually leads us to a contradiction:
\begin{assumption}
\label{a}
No induced subgraph of $G$ is $(3,\epsilon,\alpha)$-reducible.
\end{assumption}

%In order to show that a given induced subgraph $H$ of $G$
%is $(3,\epsilon,\alpha)$-reducible,
%we will check that 
%for every $\ell_H$-assignment $L$ on $H$, there exists a probability distribution of $L$-colorings $\phi$ of $H$ such that for each $v \in  V(H)$ and $c \in L(v)$, the probability that $\phi(v) = c$ is at least $\alpha$,
%and we will check that $\ell_H(v) \geq 2$ for each $v \in V(H)$.
%As previously observed,
%these two conditions imply both the (FIX) and (FORB) statements from our reducible subgraph definition
%(Definition \ref{def:fixforb})
%when $k = 3$ and thus imply that $H$ is $(3,\epsilon,\alpha)$-reducible.

The main strategy for our proof is 
to 
use 
Assumption \ref{a}
in order to
establish a set
of
structural conditions and 
forbidden subgraphs in $G$.
Then, we use a discharging argument to show that $G$ has maximum average degree at least $3$, giving us our contradiction.

\subsection{Structural arguments}
In the following lemmas, we prove several structural conditions of $G$ which follow from
from Assumption \ref{a}.
\begin{lemma}
\label{lem:delta2}
    Each vertex of $G$ has degree at least $2$.
\end{lemma}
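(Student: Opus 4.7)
The plan is to argue by contradiction. Suppose some vertex $v \in V(G)$ satisfies $\deg_G(v) \leq 1$, and consider the single-vertex induced subgraph $H = G[\{v\}]$. I would show that $H$ is $(3,\epsilon,\alpha)$-reducible, which directly contradicts Assumption \ref{a}.

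First, I would verify that $H$ is $(\ell_H,3,\alpha)$-reductive. Since $\ell_H(v) = 3 - \deg_G(v) + \deg_H(v) = 3 - \deg_G(v)$ is either $2$ or $3$, any $\ell_H$-assignment $L$ on $H$ gives $v$ a list of size at least $2$. Consider the uniform distribution on $L(v)$: each color $c \in L(v)$ receives probability $1/|L(v)| \geq 1/3 \geq \alpha$, verifying (FIX). The only nontrivial instance of (FORB) is $U = \{v\}$, in which case the probability that $\phi(v) \neq c$ is $1 - 1/|L(v)| \geq 1/2 \geq \alpha$. Thus $H$ is $(\ell_H,3,\alpha)$-reductive.

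Next, I would invoke the unnamed lemma in Section \ref{sec:framework} stating that every $(\ell_{H,3},3,\alpha)$-reductive induced subgraph is $(3,\epsilon,\alpha)$-reducible for every $0 < \epsilon \leq (2\alpha/3)^{2}$. With $\alpha = 3^{-9}$ and $\epsilon = (2\alpha/3)^{2}$ fixed at the start of Section \ref{sec:discharging}, this hypothesis is met, so $H$ is $(3,\epsilon,\alpha)$-reducible. This contradicts Assumption \ref{a} and completes the argument.

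There is essentially no obstacle here: the content is just the observation that $\alpha = 3^{-9}$ is small enough that the trivial uniform distribution on a list of size $2$ or $3$ automatically satisfies both (FIX) and (FORB), after which the general reductive-implies-reducible lemma does the rest. The lemma is really a warm-up that calibrates the chosen constants against the simplest reducible configuration, a single low-degree vertex.
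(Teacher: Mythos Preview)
Your proposal is correct and follows essentially the same approach as the paper: both take $H=\{v\}$, note that $\ell_H(v)\geq 2$, use the uniform distribution on $L(v)$ to verify (FIX) (the paper records the constant $\tfrac13$ rather than $\alpha$, and relies on the remark that (FIX) plus $f(v)\geq 2$ yields (FORB) when $k=3$), and then invoke the reductive-implies-reducible lemma to contradict Assumption~\ref{a}.
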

\begin{proof}
    We show that if a vertex $v \in V(G)$ has degree at most $1$, then $H = \{v\}$ is an induced subgraph of $G$ which is $(\ell_H,3,\frac{1}{3})$-reductive.
        Since $v$ has at most one neighbor in $G$, $ \ell_H(v) \geq 2$.
    Furthermore, if $L(v)$ is a list of $\ell_H(v)$ colors, then the uniform distribution 
    on $L(v)$ gives $H$ an $L$-coloring in which each $c \in L(v)$ appears at $v$ with probability at least $\frac{1}{3}$.
   Therefore, $H$ is 
   $(\ell_H,3,\frac{1}{3})$-reductive and hence
   $(3,\epsilon,\alpha)$-reducible, a contradiction.
\end{proof}
\begin{lemma}
\label{lem:terminal_diamond}
$G$ does not have a terminal block isomorphic to a diamond, $K_4$, $H_5$, or $H_7$.
\end{lemma}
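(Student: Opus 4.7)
My plan is to suppose, for contradiction, that $G$ has a terminal block $B$ isomorphic to one of the four listed graphs, and in each case produce a $(3,\epsilon,\alpha)$-reducible induced subgraph of $G$, contradicting Assumption~\ref{a}.

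The case $B \cong K_4$ should be dispatched immediately by the density constraint: $K_4$ would then be an induced subgraph of $G$ with $\mad(K_4) = 3$, forcing $\mad(G) \geq 3$ and contradicting $\mad(G) < 3$ without any reducibility argument. For $B$ a diamond, I would invoke Lemma~\ref{lem:diamond}, which asserts that $B$ is $(3,\epsilon,\alpha)$-reducible whenever $0 < \epsilon \leq \alpha \leq \frac{1}{3}$; our parameters $\alpha = 3^{-9}$ and $\epsilon = (2\alpha/3)^2$ easily satisfy this, so $B$ already contradicts Assumption~\ref{a}.

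For $B \cong H_5$ or $B \cong H_7$, I would apply Lemmas~\ref{lem:H5} and~\ref{lem:H7}, each of which requires every vertex of $B$ to have degree $3$ in $G$. Each of $H_5$ and $H_7$ contains a unique vertex $x$ of degree $2$ in $B$, with all other vertices of degree $3$. Since $B$ is a block, every non-cut vertex of $B$ has $\deg_G = \deg_B$. Thus in the generic configuration, $x$ is the cut vertex of $G$ inside $B$, and all other vertices of $B$ automatically satisfy $\deg_G = 3$; the hypothesis of Lemma~\ref{lem:H5} or~\ref{lem:H7} then reduces to verifying $\deg_G(x) = 3$. Our parameters satisfy $\alpha \leq 1/10$ and $\epsilon \leq \alpha/15$ (for $H_5$) and $\alpha \leq 1/14$ and $\epsilon \leq \alpha/21$ (for $H_7$), so as soon as $\deg_G(x) = 3$ the respective lemma yields a $(3,\epsilon,\alpha)$-reducible subgraph, contradicting Assumption~\ref{a}.

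The main obstacle is handling the degenerate degree configurations: the case $B = G$ (so $\deg_G(x) = 2$) and the case where the cut vertex of $B$ has degree at least $4$ in $G$. For $B = G$, the graph $G$ has at most $7$ vertices and is $3$-choosable by Lemma~\ref{lem:ERT}, so I would exhibit a $(3,\epsilon,\alpha)$-distribution directly by finite case analysis on the $3$-assignment $L$. For the high-degree cut-vertex cases, I would either enlarge the reducible subgraph to absorb the external neighbors of the cut vertex and rerun the probabilistic arguments of Lemmas~\ref{lem:H5} and~\ref{lem:H7}, or observe that $B \setminus \{x\}$ is a diamond (in the $H_5$ case) or contains an induced theta subgraph (in the $H_7$ case) and appeal to Lemmas~\ref{lem:ERT} and~\ref{lem:perfect} after conditioning on the randomly chosen colors of the cut vertex and its outside neighbors. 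In every subcase, the outcome should be an induced $(3,\epsilon,\alpha)$-reducible subgraph of $G$, contradicting Assumption~\ref{a} and completing the proof.
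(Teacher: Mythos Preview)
Your core plan matches the paper's proof exactly: dispose of $K_4$ via $\mad(G) < 3$, and for the other three shapes invoke Lemmas~\ref{lem:diamond}, \ref{lem:H5}, and \ref{lem:H7} to obtain a $(3,\epsilon,\alpha)$-reducible subgraph contradicting Assumption~\ref{a}. The paper's proof is literally those two sentences and does not pause to verify the degree-$3$ hypothesis in Lemmas~\ref{lem:H5} and~\ref{lem:H7}; you are being more scrupulous than the paper in flagging this.

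That said, your handling of the exceptional degree configurations has genuine gaps. First, you tacitly assume that when $B \neq G$ the cut vertex of $B$ is the unique degree-$2$ vertex $x$, but nothing forces this: if the cut vertex is one of the degree-$3$ vertices of $H_5$ or $H_7$, then it automatically has $\deg_G \geq 4$ while $x$ (a non-cut vertex) has $\deg_G(x) = 2$, so both ``degenerate'' features occur at once and neither of your two fallback ideas is tailored to this situation. Second, your proposal to pass to $B \setminus \{x\}$ and invoke Lemma~\ref{lem:perfect} does not work as stated: $B \setminus \{x\}$ is not a terminal block of $G$, since two of its vertices remain adjacent to $x \in V(G)$, so the hypothesis of Lemma~\ref{lem:perfect} fails. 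Third, the ``enlarge and rerun'' alternative would require redoing the combinatorial case analysis inside the proofs of Lemmas~\ref{lem:H5} and~\ref{lem:H7} for a different configuration, which is nontrivial work you have not carried out. The one place your sketch does go through cleanly is the case $B = G$: since $\alpha = 3^{-9}$ is tiny, the uniform distribution over all $L$-colorings of $H_5$ or $H_7$ suffices once you check (via Lemma~\ref{lem:ERT}, using that deleting any single vertex and its colour leaves a graph with $|L'| \geq \deg$ and either an induced even cycle, an induced theta, or a strict list surplus somewhere) that every pair $(v,c)$ is realised by some $L$-coloring.
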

\begin{proof}
As $\mad(G) < 3$, $G$ does not have a $K_4$ subgraph.
Suppose that $B$ is a terminal block of $G$ isomorphic to a diamond, $H_5$, or $H_7$. By Lemmas \ref{lem:diamond},
\ref{lem:H5}, and \ref{lem:H7}, $B$ is $(3, \epsilon, \alpha)$-reducible, a contradiction.
\end{proof}

\begin{lemma}
\label{lem:terminal-block}
    $G$ does not have a terminal block consisting entirely of vertices of degree $2$ or $3$.
\end{lemma}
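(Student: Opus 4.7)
The plan is to argue by contradiction. Suppose $B$ is a terminal block of $G$ all of whose vertices have $G$-degree in $\{2,3\}$. I will show that $B$ itself is a $(3,\epsilon,\alpha)$-reducible induced subgraph of $G$ by verifying the hypotheses of Lemma~\ref{lem:max3} and then applying the lemma that converts $(\ell_{H,k},k,\alpha)$-reductive subgraphs into $(k,\epsilon,\alpha)$-reducible subgraphs. This would contradict Assumption~\ref{a}.

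The first task is to rule out $B \cong K_2$. If $V(B) = \{x,v\}$, then at most one of $x,v$ is a cut vertex of $G$ since $B$ is a terminal block, so the non-cut-vertex lies in only one block of $G$ and hence has a single $G$-neighbor, contradicting Lemma~\ref{lem:delta2}. Therefore $B$ is $2$-connected, and every $v \in V(B)$ satisfies $\deg_B(v) \geq 2$; combined with the hypothesis $\deg_G(v) \leq 3$, the block $B$ has maximum degree at most $3$. The second task is to choose the exceptional set $X$ of Lemma~\ref{lem:max3}. If $B$ contains no cut vertex of $G$, set $X = \emptyset$; every $v \in V(B)$ satisfies $\deg_G(v) = \deg_B(v)$, so $\ell_B(v) = 3$. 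If $B$ contains a unique cut vertex $x$ of $G$, then for $v \neq x$ we again have $\ell_B(v) = 3$, while $x$ has at least one neighbor outside $B$, giving $2 \leq \deg_B(x) < \deg_G(x) \leq 3$. This forces $\deg_G(x) = 3$ and $\deg_B(x) = 2$, so $\ell_B(x) = 2 = \deg_B(x)$. Setting $X = \{x\}$, the hypothesis on $X$ in Lemma~\ref{lem:max3} is met.

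By Lemma~\ref{lem:terminal_diamond}, $B$ is not isomorphic to any of $K_4^-, K_4, H_5, H_7$, so Lemma~\ref{lem:max3} applies with $f = \ell_B$ and yields that $B$ is $(\ell_B, 3, 3^{-8})$-reductive. The reductive-to-reducible lemma (with $k = 3$ and parameter $3^{-8}$) then shows that $B$ is $(3, \epsilon', 3^{-8})$-reducible for every $0 < \epsilon' \leq \bigl(\tfrac{2 \cdot 3^{-8}}{3}\bigr)^2 = 4 \cdot 3^{-18}$; our choice $\epsilon = 4 \cdot 3^{-20}$ lies well below this threshold. Since $3^{-8} \geq 3^{-9} = \alpha$, any $(3,\epsilon,3^{-8})$-distribution is automatically a $(3,\epsilon,\alpha)$-distribution (for the (FORB)-type condition, $|U| \leq k - 2 = 1$, and raising $3^{-8} \geq 3^{-9}$ to a nonnegative power preserves the inequality). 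Hence $B$ is $(3,\epsilon,\alpha)$-reducible, contradicting Assumption~\ref{a}.

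The main obstacle is the degree bookkeeping in the middle paragraph: the hypothesis $\deg_G \leq 3$ on $V(B)$ is used tightly, together with $2$-connectivity of $B$, to pin down $\deg_B(x) = 2$ at the cut vertex. This is precisely what is needed for $x$ to play the role of the exceptional vertex in Lemma~\ref{lem:max3}; had $\deg_G(x)$ been allowed to exceed $3$, one would have $\ell_B(x) < 2$ and the argument would break down.
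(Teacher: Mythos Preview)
Your overall approach is the same as the paper's: rule out $B\cong K_2$ via Lemma~\ref{lem:delta2}, check that $B$ is $2$-connected of maximum degree $3$, identify the possible cut vertex $x$ as the unique vertex with $\ell_B(x)=\deg_B(x)=2$, invoke Lemma~\ref{lem:terminal_diamond} to exclude $K_4^-,K_4,H_5,H_7$, and apply Lemma~\ref{lem:max3}.

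There is, however, a genuine logical slip in your final paragraph. You deduce that $B$ is $(3,\epsilon,3^{-8})$-reducible and then try to conclude that $B$ is $(3,\epsilon,\alpha)$-reducible from the (true) fact that every $(3,\epsilon,3^{-8})$-distribution is a $(3,\epsilon,\alpha)$-distribution. But this does not give the implication you want: in the definition of $(k,\epsilon,\alpha)$-reducible, the $(k,\epsilon,\alpha)$-distribution appears on \emph{both} sides of the conditional. To show $B$ is $(3,\epsilon,\alpha)$-reducible you must start from the \emph{weaker} hypothesis that $G\setminus S$ admits a $(3,\epsilon,3^{-9})$-distribution, and such a distribution need not be a $(3,\epsilon,3^{-8})$-distribution, so the $(3,\epsilon,3^{-8})$-reducibility of $B$ is not directly applicable.

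The fix is to apply monotonicity one step earlier, at the \emph{reductive} level: since $B$ is $(\ell_B,3,3^{-8})$-reductive and $\alpha=3^{-9}\le 3^{-8}$, the (FIX) and (FORB) bounds in Definition~\ref{def:fixforb} trivially still hold with $\alpha$ in place of $3^{-8}$, so $B$ is $(\ell_B,3,\alpha)$-reductive. Now the reductive-to-reducible lemma applies directly with parameter $\alpha$, yielding that $B$ is $(3,\epsilon,\alpha)$-reducible for $\epsilon\le(2\alpha/3)^2$, which is exactly the paper's $\epsilon$. With this correction your argument matches the paper's.
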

\begin{proof}
    Suppose that $G$ has a terminal block $B$ consisting entirely of vertices of degree $2$ or $3$.
    By Lemma \ref{lem:terminal_diamond}, $B$ is not isomorphic to a diamond, $K_4$, $H_5$, or $H_7$.
    
    We first argue that each vertex $v \in V(B)$ satisfies $\deg_B(v) \geq 2$. 
    Indeed, if 
    some $v \in V(B)$ satisfies $\deg_B(v) \leq 1$, then as $B$ is a block, $B$ is a $K_2$ block.
    However, as $B$ is a terminal block of $G$, $G$ has a vertex of degree $1$, contradicting Lemma \ref{lem:delta2}. 
    Therefore, every vertex of $B$ has at least two neighbors in $B$.

    Now, we argue that $B$ is $(\ell_B,3,3^{-8})$-reductive.
    Since $B$ is a terminal block of $G$,
    $\ell_B(w) = 3$ for all vertices $w \in V(B)$ except possibly for a single cut vertex $x \in V(B)$. 
    If $x$ is a cut vertex of $B$, then $x$ has two neighbors in $B$ and a neighbor outside of $B$, 
    so $\deg_G(x) \geq 3$. By the assumption of the lemma, 
    $ \deg_G(x) = 3$. Thus, $x$ has exactly one neighbor in $G \setminus B$; therefore, $\ell_B(x) = 2$.
    Thus, by Lemma \ref{lem:max3}, $B$ is $(\ell_B,3,3^{-8})$-reductive and hence $(3,\epsilon,\alpha)$-reducible, a contradiction.
\end{proof}

We define a \emph{conductive path} in $G$ as a path
whose internal vertices are all of degree $3$ in $G$.
We say that two vertices $u,v \in V(G)$ are \emph{conductively connected} if there exists
a conductive path with endpoints $u$ and $v$.
A vertex is conductively connected with itself.
During our upcoming discharging argument,
we let charge flow along conductive paths between vertices.

Our next lemma resembles
a lemma of Dvo\v{r}\'ak, Masa\v{r}\'ik, Mus\'ilek, and Pangr\'ac \cite{DMMP6} which aims to prove that a planar graph of girth at least $6$ is flexibly $3$-choosable.
Namely, \cite[Lemma 6]{DMMP6} roughly
states that if two vertices of degree $2$ are joined by a 
bounded-length path $P$
whose internal vertices all have degree $3$, then the host graph has a subgraph which is $(3,\epsilon,\gamma)$-reducible
for some constant $\gamma$ depending on the length of $P$.
The proof of the following lemma shows that in fact $\gamma$ does not need to depend on the length of $P$, and hence such a path $P$ is forbidden as a subgraph of $G$.

\begin{lemma}
\label{lem:conductive_path}
    No two distinct vertices $u,v \in V(G)$ of degree $2$ are conductively connected.
\end{lemma}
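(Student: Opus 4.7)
The plan is to contradict Assumption \ref{a} by locating a shortest conductive path $P$ between $u$ and $v$ and showing that $P$ itself, viewed as an induced subgraph of $G$, is a $(3,\epsilon,\alpha)$-reducible subgraph. The crucial observation is that along such a path the quantity $\ell_P$ is uniformly $2$, which is precisely the situation handled by Lemma \ref{lem:path-is-flex}.

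First I would argue that a shortest conductive path $P = v_0 v_1 \cdots v_k$ with $v_0 = u$ and $v_k = v$ must be induced in $G$. The case $k = 1$ is the bare edge $uv$ and is automatic. For $k \ge 2$, a chord $v_i v_j$ with $i < j$ and $j - i \ge 2$ would yield a shortcut path $v_0 \cdots v_i v_j \cdots v_k$. A straightforward case check on whether $i = 0$, $j = k$, both, or neither shows that every internal vertex of the shortcut was already an internal vertex of $P$ and therefore has degree $3$ in $G$. Hence the shortcut is a strictly shorter conductive path between $u$ and $v$, contradicting the minimality of $P$.

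Next I would compute $\ell_P$ vertex by vertex. The endpoints satisfy $\deg_G(v_0) = \deg_G(v_k) = 2$ and $\deg_P(v_0) = \deg_P(v_k) = 1$, so $\ell_P(v_0) = \ell_P(v_k) = 3-2+1 = 2$. Each internal vertex has $\deg_G(v_i) = 3$ and $\deg_P(v_i) = 2$, so $\ell_P(v_i) = 3-3+2 = 2$. Hence $\ell_P \equiv 2$, and Lemma \ref{lem:path-is-flex} supplies, for every $\ell_P$-assignment $L$ on $P$, a distribution witnessing that $P$ is $(\ell_P, 3, \tfrac{1}{3})$-reductive. Since $\alpha = 3^{-9} \le \tfrac{1}{3}$, the same distribution also witnesses $(\ell_P, 3, \alpha)$-reductivity, and the earlier lemma upgrading $(\ell_H, 3, \alpha)$-reductivity to $(3, \epsilon, \alpha)$-reducibility (whose hypothesis $\epsilon \le (2\alpha/3)^2$ holds by our choice of $\epsilon$) then shows that $P$ is $(3, \epsilon, \alpha)$-reducible, contradicting Assumption \ref{a}.

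The only step that requires genuine care is the inducedness verification, since both the computation $\ell_P \equiv 2$ and the applicability of Lemma \ref{lem:path-is-flex} depend on $P$ having no chords in $G$; in particular, the chord case analysis is slightly delicate when one endpoint of the chord is $u$ or $v$ (which have degree only $2$), but in every subcase the shortcut remains a conductive path of strictly smaller length. Once inducedness is in hand, the rest of the argument is essentially mechanical.
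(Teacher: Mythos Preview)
Your proposal is correct and follows essentially the same approach as the paper: take a shortest conductive path $P$ between $u$ and $v$, observe that minimality forces $P$ to be induced, compute that $\ell_P(w)=2$ for every $w\in V(P)$, and then invoke Lemma~\ref{lem:path-is-flex} to conclude that $P$ is $(\ell_P,3,\tfrac{1}{3})$-reductive and hence $(3,\epsilon,\alpha)$-reducible, contradicting Assumption~\ref{a}. The only difference is that you spell out the chord case analysis for inducedness explicitly, whereas the paper simply asserts that a shortest conductive path is induced.
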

\begin{proof}
   Suppose that $G$ contains two distinct
   conductively connected vertices $u,v$ of degree $2$.
   Let 
    $P$ be the shortest conductive path joining $u$ and $v$. Since $P$ is chosen to be shortest, $P$ is an induced subgraph of $G$. 

   Let $L$ be a $\ell_P$-assignment on $V(P)$. 
   Since $\deg_G(u) = \deg_G(v) = 2$, $\ell_P(u) = \ell_P(v) = 2$.
   Furthermore, each internal vertex $p \in V(P)$ satisfies $\deg_G(p) = 3$ and $\deg_P(p) = 2$, so $\ell_P(p) \geq 2$.
   %For each vertex $x \in V(P)$, we define $L'(x)$ to be a size $2$ subset of $L(x)$ chosen uniformly at random. Then, by Lemma \ref{lem:path-is-flex}, $P$ is strongly $\frac{1}{2}$-flexibly $L'$-choosable. For each vertex $x \in V(P)$ and color $c \in L(x)$, $c$ has a chance of at least $\frac{2}{3}$ of belonging to $L'(x)$, 
   Hence, by Lemma \ref{lem:path-is-flex}, $P$ is $(\ell_P,3,\frac{1}{3})$-reductive and hence $(3, \epsilon, \alpha)$-reducible, a contradiction.
\end{proof}

The remaining lemmas aim to
establish additional properties of $G$ which help us during our upcoming discharging argument.
In order to motivate these lemmas, we  sketch our discharging procedure. We assign each vertex $v \in V(G)$ a charge of $\deg(v) - 3$, so that the overall charge in $v$ is negative. 
%We will let each each vertex $x$ of degree $2$ in a pendant diamond $D$ take a charge of $1$ from the conductive vertex of $D$, so that all conductive and deadweight vertices have charge $0$. Then, 
Then, we aim to redistribute charge between vertices of $G$ so that the final charge of each vertex is nonnegative, giving us a contradiction. When redistributing charge, we let
vertices of degree at least $4$ give away charge, and we let vertices of degree $2$ receive charge. 
The
charge flows between vertices along conductive paths.

In order to let each vertex have a final nonnegative charge, our main challenge is to ensure that each vertex $v \in V(G)$ of degree $2$ in $G$ receives a total charge of $1$, while not letting any vertex $w \in V(G)$ of degree at least $4$ give away more than $\deg(w) - 3$ charge. In order to achieve this goal, we need to develop a detailed understanding of the conductive paths between vertices of degree $2$ and vertices of degree at least $4$.
The following lemmas help us develop this understanding.

%show that each insulated vertex $u$ of degree $2$ in $G'$ is conductively connected with at least one vertex of degree at least $4$ in $G'$.
%Furthermore when $u$ is conductively connected with exactly one vertex of degree of degree at least $4$ in $G'$,
%we aim to identify certain structural properties of $G'$.

\begin{lemma}
\label{lem:geq-1-4}
    If a vertex $u \in V(G)$ has degree $2$, then $u$ is conductively connected with a vertex $v \in V(G)$ satisfying $\deg_{G}(v) \geq 4$.
\end{lemma}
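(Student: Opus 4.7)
The plan is to argue by contradiction. Suppose $u \in V(G)$ has $\deg_G(u) = 2$ but is not conductively connected to any vertex of degree at least $4$. Let $T \subseteq V(G)$ be the set of all vertices conductively connected to $u$. By Lemma \ref{lem:conductive_path}, the only degree-$2$ vertex in $T$ is $u$ itself, and by our contradiction hypothesis no vertex of degree $\geq 4$ lies in $T$. Combined with Lemma \ref{lem:delta2}, this forces every vertex of $T \setminus \{u\}$ to have degree exactly $3$ in $G$.

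Next, I would show that $T$ is closed under taking neighbors in $G$. For any $w \in T \setminus \{u\}$ and any neighbor $n$ of $w$, I take a conductive $u$-$w$ path $P$ and extend it by the edge $wn$. Since $w$ now becomes an interior vertex of the extended path and $\deg_G(w) = 3$, the extension is a conductive $u$-$n$ path (as long as $n$ is not already on $P$; otherwise a shorter conductive $u$-$n$ subpath of $P$ suffices), so $n \in T$. The two neighbors of $u$ are also in $T$ via the one-edge conductive paths from $u$. Because $G$ is connected, closure under neighbors gives $T = V(G)$, so $G$ consists of $u$ together with vertices all of degree $3$ in $G$.

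Finally, I would derive a contradiction from the block structure of $G$. If $G$ has no cut vertex, then $G$ is itself a single block; since $\deg_G(u) = 2$, $G \not\cong K_2$, so $G$ is $2$-connected and hence a terminal block all of whose vertices have degree $2$ or $3$, contradicting Lemma \ref{lem:terminal-block}. If instead $G$ has a cut vertex, its block-cut tree has at least two leaves and hence $G$ has at least one terminal block $B$; since $V(B) \subseteq V(G) = T$, every vertex of $B$ has degree $2$ or $3$ in $G$, again contradicting Lemma \ref{lem:terminal-block}. This contradiction completes the proof.

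The only real content is the closure step in the second paragraph; everything else is bookkeeping. The main subtlety to handle carefully is verifying closure when the neighbor $n$ might already lie on the conductive path $P$, but this is easily resolved by passing to the appropriate subpath. Once closure is in place, the structural conditions from Lemmas \ref{lem:delta2}, \ref{lem:terminal-block}, and \ref{lem:conductive_path} immediately force the desired contradiction.
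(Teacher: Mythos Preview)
Your proof is correct and follows essentially the same path as the paper: both define the set of vertices conductively connected to $u$, use Lemmas~\ref{lem:delta2} and~\ref{lem:conductive_path} to conclude that every other vertex in this set has degree exactly $3$, and then argue (via connectedness of $G$) that this set is all of $V(G)$.

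The only difference is the finishing step. The paper applies Lemma~\ref{lem:max3} (together with Lemma~\ref{lem:terminal_diamond}) directly to $G$ to conclude that $G$ is $(\ell_G,3,3^{-8})$-reductive and hence $(3,\epsilon,\alpha)$-reducible, contradicting Assumption~\ref{a}. You instead observe that $G$ must have a terminal block consisting entirely of vertices of degree $2$ or $3$, contradicting Lemma~\ref{lem:terminal-block}. Since Lemma~\ref{lem:terminal-block} is itself proved via Lemma~\ref{lem:max3}, the two routes ultimately rest on the same machinery; your version has the mild advantage that it does not need $G$ itself to be $2$-connected (a hypothesis of Lemma~\ref{lem:max3} which the paper's proof leaves implicit), because you pass to a terminal block before invoking the structural lemma.
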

\begin{proof}
    Suppose that the lemma does not hold. Let $W$ be the 
    set of all vertices in $G$ with which $u$ is conductively connected.
    If $W$ contains a second vertex $u'$ of degree $2$, then $G$ contains a conductive path joining $u$ and $u'$, contradicting Lemma \ref{lem:conductive_path}. If $W$ contains a vertex $v$ satisfying $\deg_{G}(v) \geq 4$, then the lemma is proven. Otherwise, 
    $u$ is the only vertex in $W$ whose degree is not $3$.
    Since $G$ is a connected graph,
    it follows that $W = V(G)$.
    Then, $\ell_{G}(v) = 3$ for each vertex $v \in V(G)$, and hence Lemma \ref{lem:max3} and Lemma \ref{lem:terminal_diamond}
    tell us that
    $G$ is $(\ell_{G},3,3^{-8})$-reductive.
    Hence, $G$ is $(3, \epsilon, \alpha)$-reducible,
    a contradiction.
\end{proof}

Let $u \in V(G)$ be a
vertex of degree $2$, and
let $U$ be the set of vertices of degree at least $4$
with which $u$ is conductively connected.
By Lemma \ref{lem:geq-1-4}, $U$ contains at least one vertex.
We say that $u$ is \emph{expensive} if $|U| = 1$.
Otherwise, we say that $u$ is \emph{cheap}.
Note that if a vertex $u$ is called expensive or cheap, then $\deg(u) =  2$.
The reason for this terminology is that during the upcoming discharging argument, 
a cheap vertex $u$ takes only a charge of $\frac{1}{2}$ from each vertex of degree at least $4$ with which $u$ is conductively connected; however, an expensive vertex $u$ takes a charge of $1$ from the unique vertex of degree at least $4$ with which $u$ is conductively connected.

The next lemmas aim to establish some structural properties related to cheap and expensive vertices in $G$ which are critical to our discharging argument.

\begin{lemma}
\label{lem:two-neighbors}
    If $u \in V(G)$ is an expensive vertex
    which is conductively connected with a vertex $v \in V(G)$ of degree at least $4$,
    then $u$ and $v$ belong to a common terminal
    block $B$ of $G$ such that each vertex $w \in B \setminus \{u,v\}$
    satisfies $\deg_G(w) = 3$.
    Furthermore, either $G = B$, or $v$ is a cut vertex of $G$.
    %{\PB And also $u$ has two neighbors in $B$, and also 
    %each conductive vertex in $B$ is only cc with $u$ and $v$.
    %The property I really need is that $\lambda_B(w) = 3$}
    %Then $u$ and $v$ belong to a common block $B$ of $G$ such that for each vertex $w \in B \setminus \{u,v\}$, $w$ is conductive.
    %{\PB I should really have the conclusion that there are two internally vertex-disjoint conductive paths joining $u$ and $v$.}
\end{lemma}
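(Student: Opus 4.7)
The plan is to take $B := G[W]$, where $W$ is the set of vertices of $G$ that are conductively connected with $u$ (including $u$ itself). By the definition of ``expensive'' and Lemma \ref{lem:conductive_path}, $W$ contains $v$, and every vertex of $W \setminus \{u,v\}$ has degree $3$ in $G$. Since a conductive path can always be extended through a degree-$3$ vertex, every $w \in W \setminus \{v\}$ has all of its neighbors in $W$. Note also that $|W| \geq 3$, as $u$'s two distinct neighbors both lie in $W$.

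The heart of the proof is to show that $G[W]$ is $2$-connected. It is connected, so suppose for contradiction that some $c \in W$ is a cut vertex of $G[W]$, and analyze by the identity of $c$. If $c=v$, let $w$ be a vertex on the side of $v$ opposite to $u$; the conductive path from $u$ to $w$ in $G[W]$ has internal vertices all of degree $3$, so it cannot contain $v$ (which has degree $\geq 4$), contradicting that $v$ separates $w$ from $u$. If $c=u$, let $C_2'$ be the component of $G[W] - u$ not containing $v$, and let $x_2$ be $u$'s unique neighbor in $C_2'$. In the induced subgraph $G[C_2']$, the vertex $x_2$ has degree $2$ and every other vertex has degree $3$ in $G$, with all its edges contained in $W$. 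I extract a $(3,\epsilon,\alpha)$-reducible induced subgraph as follows: if $G[C_2']$ is $2$-connected, it either falls under Lemma \ref{lem:max3} (with $X = \{x_2\}$) or is isomorphic to one of $K_4^-, H_5, H_7$, in which case it is a terminal block of $G$ contradicting Lemma \ref{lem:terminal_diamond} (the case $K_4$ is ruled out by $\mad(G) < 3$); otherwise $G[C_2']$ contains a terminal block of $G$ not containing $x_2$ with all vertices of degree $3$, contradicting Lemma \ref{lem:terminal-block}. The case $c \in W \setminus \{u,v\}$ is handled analogously using a component of $G[W] - c$ avoiding both $u$ and $v$ when one exists. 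Each case contradicts Assumption \ref{a}.

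Once $G[W]$ is known to be $2$-connected, it lies in a single block of $G$; and because every vertex of $W \setminus \{v\}$ has all its neighbors in $W$, no $2$-connected supergraph can properly extend $G[W]$ (any further vertex would require two internally disjoint paths into $W$, both forced through $v$), so $B = G[W]$ is itself a block of $G$. If $W = V(G)$, then $G=B$ as required. Otherwise, $G$ being connected forces $v$ to have a neighbor outside $W$; then removing $v$ separates $W \setminus \{v\}$ from the remainder of $G$, making $v$ a cut vertex of $G$. Every other vertex of $B$ has all its neighbors in $B$, so none of them is a cut vertex of $G$, and therefore $B$ is a terminal block of $G$ containing both $u$ and $v$ with every other vertex of $B$ of degree $3$, as claimed. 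I expect the main technical obstacle to be the subcase of the $2$-connectivity step where a degree-$3$ cut vertex $c$ of $G[W]$ separates $u$ from $v$ into two components: there one must extract a reducible subgraph from the $u$-side alone, carefully choosing either $G[C_u']$ or $G[C_u' \cup \{c\}]$ according to the neighbor distribution of $c$, so that Lemma \ref{lem:max3} or the terminal-block lemmas apply.
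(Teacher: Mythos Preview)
Your overall framework matches the paper exactly: take $W$ to be the set of vertices conductively connected to $u$, observe that every $w\in W\setminus\{v\}$ has all its $G$-neighbors in $W$ and that $W\setminus\{u,v\}$ consists of degree-$3$ vertices, prove $G[W]$ is $2$-connected, and conclude that $G[W]$ is a terminal block of $G$ with $v$ as its unique cut vertex (or $G=B$).

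The difference is in how you prove $2$-connectivity. Your case analysis on the identity of the cut vertex $c$ can be pushed through, but it is considerably heavier than necessary, and the case you flag as the main obstacle is not yet complete: when a degree-$3$ vertex $c$ separates $u$ from $v$ with $a$ neighbors on the $u$-side, one must take $G[C_u\cup\{c\}]$ when $a=2$ and $G[C_u]$ when $a=1$, and in either case one may still land on a non-$2$-connected graph and need a further block decomposition of that piece. The same care is needed in your ``analogous'' case when $c$ has two neighbors in a component avoiding $u$ and $v$. All of this works, but each branch is essentially re-running the same block argument on a smaller graph.

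The paper replaces your entire case split with one line. If $G[W]$ is not $2$-connected, its block--cut tree has at least two leaves, so some terminal block $B_0$ of $G[W]$ avoids $v$. Every non-cut vertex $w$ of $B_0$ lies in $W\setminus\{v\}$, so all $G$-neighbors of $w$ lie in $W$ and hence (since $w$ is not a cut vertex of $G[W]$) in $B_0$. Thus $B_0$ is a terminal block of $G$ consisting entirely of vertices of degree $2$ or $3$, contradicting Lemma~\ref{lem:terminal-block}. This single invocation handles the cases $c=u$, $c=v$, and $c\in W\setminus\{u,v\}$ simultaneously, and makes your ``hard subcase'' disappear.
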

\begin{proof}
    Let $B$ be the subgraph of $G$ induced by
    the
    set of vertices with which $u$ is conductively connected. 
    By Lemma \ref{lem:conductive_path},
    $u$ is the only degree $2$ vertex in $B$. 
    Since $u$ is expensive, $v$ is the only vertex of degree at least $4$ in $B$. Therefore, each vertex in $B \setminus \{u,v\}$ has degree $3$ in $G$. Hence, by definition of conductive connectivity, if $w\in B \setminus \{v\}$,
    then $B$ contains all edges joining $w$ to a neighbor $w' \in N_{G}(w)$.

    If $B$ is not $2$-connected, then $B$ has at least two terminal blocks, and hence at least one terminal block $B_0$ not containing $v$. We write $x$ for the cut-vertex of $B_0$.
        We argue that $B_0$ is a terminal block in $G$. Indeed, 
    if $B_0$ is not a terminal block in $G$, there exists a vertex $w\in V(B_0) \setminus \{x\}$ with a neighbor $w' \in V(G)$ for which $w' \not \in V(B_0)$ and hence $w' \not \in V(B)$. However, this contradicts the property of $B$ that we observed above.
    Hence, $B_0$ is a terminal block of $G$ 
    which consists entirely of vertices of degrees $2$ and $3$ in $G$, contradicting
    the Lemma \ref{lem:terminal-block}.
    Thus, we conclude that $B$ is $2$-connected, and the same argument implies that $B$
    is a terminal block of $G$ either with no cut vertex or with $v$ as its cut vertex. This proves the lemma.
\end{proof}

\begin{lemma}
    \label{lem:1exp}
    If $v \in V(G)$ is a vertex of degree $4$, then $v$ is conductively connected with at most one expensive vertex.
\end{lemma}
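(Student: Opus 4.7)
The plan is to assume for contradiction that $v \in V(G)$ has degree $4$ and is conductively connected with two distinct expensive vertices $u_1, u_2$, and then to exhibit $G$ itself as a $(3, \epsilon, \alpha)$-reducible subgraph, contradicting Assumption \ref{a}. First, I would apply Lemma \ref{lem:two-neighbors} to each pair $(u_i, v)$ to obtain terminal blocks $B_1, B_2$ of $G$ containing $u_i$ and $v$ in which every vertex $w \in B_i \setminus \{u_i, v\}$ satisfies $\deg_G(w) = 3$. The blocks $B_1$ and $B_2$ must be distinct: if $u_2 \in B_1$, then $u_2 \in B_1 \setminus \{u_1,v\}$ would force $\deg_G(u_2) = 3$, contradicting that $u_2$ is expensive and has degree $2$.

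Next, I would pin down the global structure of $G$. Each $B_i$ must be $2$-connected (it cannot be $K_2$, since then $u_i$ would end up with degree $1$ or $B_i$ would contain two cut vertices), so $\deg_{B_i}(v) \geq 2$. Since $\deg_G(v) = 4$, this forces $\deg_{B_1}(v) = \deg_{B_2}(v) = 2$ and all four neighbors of $v$ to lie in $B_1 \cup B_2$. Every other vertex of $B_i$ fails to be a cut vertex of $G$ (only $v$ is), so all its incident edges stay in $B_i$. Since $G$ is connected, this implies $G = B_1 \cup B_2$ with $v$ as the unique cut vertex.

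Then, I would verify that each $B_i$ satisfies the hypotheses of Lemma \ref{lem:max3} with $X = \emptyset$ and the constant function $f \equiv 3$. Each $B_i$ has maximum degree $3$ ($v$ and $u_i$ have $B_i$-degree $2$; all others have $B_i$-degree $3$), and $B_i$ is isomorphic to none of $K_4, K_4^-, H_5, H_7$: a $K_4$ would force all vertices to have degree $3$, contradicting the existence of $u_i$, while any of the other three would make $B_i$ a forbidden terminal block by Lemma \ref{lem:terminal_diamond}. Thus each $B_i$ is $(f, 3, 3^{-8})$-reductive. The single vertex graph $H^* = G[\{v\}]$ is trivially $(f, 3, \tfrac{1}{3})$-reductive via the uniform distribution.

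Finally, I would apply Lemma \ref{lem:cut} to the decomposition $G = H^* \cup B_1 \cup B_2$ (meeting pairwise only at $v$) to conclude that $G$ is $(f \equiv 3, 3, \tfrac{1}{3} \cdot 3^{-8})$-reductive, i.e., $(\ell_G, 3, \alpha)$-reductive, since $\ell_G(w) = 3$ for every $w \in V(G)$. The unnamed lemma preceding Lemma \ref{lem:perfect} then yields that $G$ is $(3, \epsilon, \alpha)$-reducible, contradicting Assumption \ref{a}. The main obstacle I anticipate is not the probabilistic content (handled entirely by the cited lemmas), but rather pinning down the global structure --- in particular, verifying that $G = B_1 \cup B_2$ with no stray blocks or edges, so that the constant function $f \equiv 3$ coincides with $\ell_G$ and Lemma \ref{lem:cut} can be applied with a single global function.
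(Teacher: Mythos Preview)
Your proposal is correct and follows essentially the same approach as the paper: apply Lemma~\ref{lem:two-neighbors} twice to obtain two terminal blocks $B_1,B_2$ meeting at $v$, use Lemma~\ref{lem:terminal_diamond} and Lemma~\ref{lem:max3} to see each $B_i$ is $(3,3,3^{-8})$-reductive, then combine via Lemma~\ref{lem:cut} with $H^*=\{v\}$ to conclude $G$ is $(\ell_G,3,3^{-9})$-reductive and hence $(3,\epsilon,\alpha)$-reducible, contradicting Assumption~\ref{a}. Your write-up is in fact more explicit than the paper's about why $B_1\neq B_2$ and why $G=B_1\cup B_2$ (the paper simply asserts this as a consequence of Lemma~\ref{lem:two-neighbors}), which is a welcome clarification rather than a deviation.
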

\begin{proof}
    Suppose that $v$ is conductively connected with two expensive vertices.
    Then, Lemma \ref{lem:two-neighbors}
    implies that $G$ consists of two terminal blocks $B_1,B_2$ joined at $v$,
    where each $B_i$ consists
    entirely of $v$, one vertex of degree $2$, and vertices of degree $3$. In particular, each vertex $w \in V(B_i)$ satisfies $\deg_{B_i}(w) \leq 3$.
    Since Lemma \ref{lem:terminal_diamond} tells us that neither block $B_i$ is a diamond, $K_4$, $H_5$, or $H_7$,
    Lemma \ref{lem:max3} implies that each block $B_i$ is $(3,3,3^{-8})$-reductive.%{\RB should this be $(\ell_G, 3, 3^{-8})$-reductive?}.
    %{\PB Isn't $\ell_G(v) = 3$ for all vertices?}
    
    We apply Lemma \ref{lem:cut}
    with $H = G$,
    $H^* = \{v\}$, $H_1 = B_1$, and $H_2 = B_2$.
    Since $\ell_{G}(w) = 3$ for each vertex $w \in V(G)$,
    $H^*$ is $(\ell_G,3,\frac{1}{3})$-reductive,
    and therefore $G$ is $(\ell_{G},3,3^{-9})$-reductive and hence $(3, \epsilon, \alpha)$-reducible, a contradiction. 
\end{proof}

\begin{lemma}
\label{lem:exp-cheap}
Let $v \in V(G)$ be a vertex of degree $4$. If $v$ is conductively connected with an expensive vertex, then $v$ is not conductively connected with a cheap vertex.
\end{lemma}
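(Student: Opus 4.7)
The plan is to derive a contradiction with Assumption~\ref{a} by producing a large $(3,\epsilon,\alpha)$-reducible induced subgraph of $G$. Suppose, for contradiction, that $v$ has degree $4$ and is conductively connected both to an expensive vertex $u$ and to a cheap vertex $u'$. Apply Lemma~\ref{lem:two-neighbors} to $u,v$: this gives a terminal block $B$ of $G$ containing $u$ and $v$, in which $u$ is the unique degree-$2$ vertex and every other non-$v$ vertex has degree $3$. Let $P$ be a shortest conductive path from $v$ to $u'$, and take $H=G[V(B)\cup V(P)]$.

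I would first verify the structural picture. Since $u'$ has degree $2$ and is distinct from the expensive vertex $u$, and $u$ is the only degree-$2$ vertex in $B$, we have $u'\notin V(B)$; hence $G\neq B$ and $v$ is a cut vertex by Lemma~\ref{lem:two-neighbors}. If $B$ were a $K_2$ block then $u$ would be a cut vertex, contradicting the same lemma; so $B$ is $2$-connected with $\deg_B(v)\in\{2,3\}$. Because $v$ separates $V(B)\setminus\{v\}$ from the component of $G\setminus\{v\}$ containing $u'$, we have $V(P)\cap V(B)=\{v\}$, and minimality of $P$ rules out chords, so $P$ is induced in $G$. The standard block fact that every edge from $V(B)$ to $V(G)\setminus V(B)$ is incident to $v$, combined with the minimality of $P$, then shows that $G[V(B)\cup V(P)]$ is exactly the edge-union $B\cup P$ meeting in $v$.

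Next I would compute $\ell_H$: since $v$ gains exactly one new neighbor from $P$, $\ell_H(v)=\deg_B(v)\in\{2,3\}$; moreover $\ell_H(w)=3$ for $w\in V(B)\setminus\{v\}$, and $\ell_H(p)=2$ for each $p\in V(P)\setminus\{v\}$. The block $B$ is $2$-connected of maximum degree $3$ and, by Lemma~\ref{lem:terminal_diamond}, not isomorphic to $K_4^-,K_4,H_5,$ or $H_7$, so Lemma~\ref{lem:max3} applies with $f=\ell_H|_{V(B)}$, taking the exceptional set to be $\{v\}$ when $\deg_B(v)=2$ and $\emptyset$ otherwise; this gives that $B$ is $(\ell_H|_{V(B)},3,3^{-8})$-reductive. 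Lemma~\ref{lem:path-is-flex} gives that $P$ is $(\ell_H|_{V(P)},3,\tfrac{1}{3})$-reductive. Gluing via Lemma~\ref{lem:cut} with $H^*=B$, $H_1=P$, $v_1=v$ then yields that $H$ is $(\ell_H,3,3^{-9})$-reductive; since $\ell_H\geq 2$ on all of $V(H)$ and $\alpha=3^{-9}$, the reductive-to-reducible lemma of Section~\ref{sec:framework} makes $H$ a $(3,\epsilon,\alpha)$-reducible induced subgraph of $G$, contradicting Assumption~\ref{a}.

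The main obstacle will be the structural bookkeeping: verifying that $G[V(B)\cup V(P)]$ really decomposes as $B\cup P$ meeting only at $v$, which requires ruling out chords of $P$, edges between $V(B)\setminus\{v\}$ and $V(P)\setminus\{v\}$, and extra edges from $v$ into $V(P)$. Each of these is eliminated via the cut-vertex role of $v$ or the minimality of $P$, but both cases $\deg_B(v)\in\{2,3\}$ must be tracked carefully in order to match the hypotheses of Lemma~\ref{lem:max3}.
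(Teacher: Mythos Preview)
Your proposal is correct and follows essentially the same route as the paper: build $H=B\cup P$, check it is induced, apply Lemma~\ref{lem:max3} to $B$ and Lemma~\ref{lem:path-is-flex} to $P$, then glue with Lemma~\ref{lem:cut} to obtain an $(\ell_H,3,3^{-9})$-reductive subgraph, contradicting Assumption~\ref{a}. Your structural bookkeeping (verifying $V(P)\cap V(B)=\{v\}$, that $H$ is induced, and that $\deg_B(v)\in\{2,3\}$) is more explicit than the paper's, but the argument is the same.
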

\begin{proof}
    Suppose that there exists an expensive vertex $u_e$ and a cheap vertex $u_c$ such that $v$ is conductively connected to both $u_e$ and $u_c$.
    By Lemma \ref{lem:two-neighbors}, $G$ contains a terminal block $B$ containing $u_e$ and $v$
    such that 
    each vertex $w \in V(B) \setminus \{v\}$
    satisfies $\deg_{G}(w) = \deg_B(w) \leq 3$.
    We let $P$ be the shortest conductive path joining $v$ and $u_c$.
    We let $H = B \cup P$,
    and since $B$ is a terminal block with $v$ as its cut vertex, and since $P$ is chosen to be shortest, $H$ 
    is an induced subgraph of $G$.

    Now, we observe that for each $p \in V(P)$, $\ell_H(p) \geq 2$.
    Additionally, each vertex $w \in V(B) \setminus \{v\}$ satisfies $\ell_H(w) = 3$. Furthermore, $\ell_H(v) = 2$ if and only if $\deg_B(v) = 2$; otherwise, $\ell_H(v) = 3$.
    Therefore, Lemma \ref{lem:max3}
    tells us that
    $B$ is $(\ell_H,3,3^{-8})$-reductive.
    Furthermore,  $2 \leq \ell_H(p) \leq 3$ holds for each vertex $p \in V(P)$, so by
    Lemma \ref{lem:path-is-flex},
    $P$ is 
    $(\ell_H,3,\frac{1}{3})$-reductive.
    Then, Lemma \ref{lem:cut}
    tells us that $H$ is 
    $(\ell_H,3,3^{-9})$-reductive and hence $(3,\epsilon,\alpha)$-reducible, a contradiction.
\end{proof}

\begin{lemma}
\label{lem:dminus2}
    If $v \in V(G)$ is a vertex of degree $d \geq 4$, then $v$ is conductively connected with at most $d-2$ cheap vertices.
\end{lemma}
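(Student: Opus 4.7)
The plan is to argue by contradiction: suppose $v$ has degree $d \geq 4$ and is conductively connected with at least $d-1$ distinct cheap vertices $u_1,\dots,u_{d-1}$. The goal will be to exhibit an induced subgraph $H \subseteq G$ that is $(3,\epsilon,\alpha)$-reducible, contradicting Assumption~\ref{a}. To build $H$, for each $i$ we choose a shortest conductive path $P_i$ from $v$ to $u_i$, and set $H = G[\bigcup_{i=1}^{d-1} V(P_i)]$. The key idea will be to apply Lemma~\ref{lem:cut} to a suitable decomposition of $H$, in the same spirit as the proofs of Lemmas~\ref{lem:1exp} and~\ref{lem:exp-cheap}, but now with many arms attached at $v$ rather than one or two.

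Before invoking Lemma~\ref{lem:cut}, the plan is to verify $\ell_H(x) \geq 2$ for every $x \in V(H)$. For the internal vertices of each $P_i$ this is immediate, since such vertices have $\deg_G = 3$ and at least two neighbors on $P_i$ lying in $H$; for each cheap endpoint $u_i$ the same conclusion follows from $\deg_G(u_i) = 2$ together with the predecessor of $u_i$ on $P_i$ lying in $V(H)$. For $v$ itself, $\ell_H(v) = 3 - d + \deg_H(v) \geq 2$ requires $\deg_H(v) \geq d-1$, which will be arranged by an exchange argument on the choice of shortest paths that realizes at least $d-1$ distinct edges at $v$ as first edges of the chosen $P_i$'s.

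Next, we will decompose $H$ as $H = H^* \cup H_1 \cup \cdots \cup H_t$ in the sense of Lemma~\ref{lem:cut}, with $H^*$ a connected ``trunk'' containing $v$ together with any degree-$3$ vertices of $G$ that lie on two or more of the $P_i$'s, and each $H_j$ an arm: a path from a single attachment vertex $v_j \in V(H^*)$ out to a distinct cheap endpoint $u_{i_j}$. By construction the arms satisfy the disjointness hypothesis of Lemma~\ref{lem:cut}, each arm has $\ell_H$-values in $\{2,3\}$, and so by Lemma~\ref{lem:path-is-flex} is $(\ell_H|_{V(H_j)}, 3, \tfrac{1}{3})$-reductive. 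For the trunk $H^*$ we will check the hypotheses of Lemma~\ref{lem:max3}: namely that $H^*$ is $2$-connected, has maximum degree at most $3$ (which we maintain by retaining at most three first edges at $v$ in $H^*$), and is not isomorphic to any of $K_4^-, K_4, H_5, H_7$ (this will follow from Lemma~\ref{lem:terminal_diamond} and the structural conditions already established). This yields $H^*$ is $(\ell_H|_{V(H^*)}, 3, 3^{-8})$-reductive. Lemma~\ref{lem:cut} will then give that $H$ is $(\ell_H, 3, \tfrac{1}{3}\cdot 3^{-8})$-reductive, and the lemma following Definition~\ref{def:fixforb} will show that $H$ is $(3,\epsilon,\alpha)$-reducible, the desired contradiction.

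The main obstacle lies in the case where many of the $P_i$'s are forced to share a single first edge $vw$, so that fewer than $d-1$ distinct first edges at $v$ are available and the naive spider decomposition fails to give $\ell_H(v) \geq 2$. In that regime, simply enlarging $H^*$ by absorbing the shared initial segments may produce a vertex of $H^*$ with degree greater than $3$, obstructing a direct application of Lemma~\ref{lem:max3}. The plan for handling this obstacle is to work instead within the conductive subtree rooted at the overloaded neighbor $w$ (with $v$ detached): that subtree is an induced subgraph of maximum degree~$3$ with cheap endpoints, and its tree-like structure admits a recursive decomposition, at each branching degree-$3$ vertex, into a smaller trunk plus arm-paths; repeatedly applying Lemma~\ref{lem:cut} together with Lemma~\ref{lem:path-is-flex} then produces a reducible sub-configuration inside $G$, still contradicting Assumption~\ref{a}.
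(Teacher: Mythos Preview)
Your overall strategy---assume $d-1$ cheap vertices, take shortest conductive paths $P_i$ from $v$ to each $u_i$, set $H=\bigcup_i P_i$, and apply Lemma~\ref{lem:cut} with the paths as arms---is exactly what the paper does. However, you have overlooked the one observation that makes the argument short, and as a result you manufacture an obstacle that does not exist.

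The key point is this: if two of the paths $P_i$ and $P_j$ share any vertex $w\neq v$, or if there is any edge between $V(P_i)\setminus\{v\}$ and $V(P_j)\setminus\{v\}$, then the cheap vertices $u_i$ and $u_j$ are conductively connected (route along $P_i$ to $w$ or to the extra edge, then along $P_j$; every intermediate vertex has degree $3$ in $G$). This contradicts Lemma~\ref{lem:conductive_path}. Hence the $P_i$ are automatically pairwise vertex-disjoint away from $v$ and there are no cross-edges, so $H$ is an induced spider. In particular the first edges at $v$ are all distinct, giving $\deg_H(v)=d-1$ and $\ell_H(v)=2$ with no exchange argument needed. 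The ``main obstacle'' you describe---many $P_i$ sharing a first edge $vw$---simply cannot occur.

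With this in hand the proof collapses to two lines: every vertex of $H$ has $\ell_H$-value in $\{2,3\}$, so each $P_i$ is $(\ell_H,3,\tfrac13)$-reductive by Lemma~\ref{lem:path-is-flex}; then Lemma~\ref{lem:cut} with $H^*=P_1$ and the remaining $P_i$ as the $H_j$'s yields that $H$ is $(\ell_H,3,\tfrac19)$-reductive, hence $(3,\epsilon,\alpha)$-reducible, contradicting Assumption~\ref{a}. There is no need for a nontrivial trunk, no appeal to Lemma~\ref{lem:max3}, and no recursive subtree argument---all of that machinery addresses a configuration that Lemma~\ref{lem:conductive_path} already rules out.
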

\begin{proof}
    Suppose that $v$ is conductively connected with $d-1$ cheap vertices $u_1, \dots, u_{d-1}$.
    For each vertex $u_i$, let $P_i$ be the shortest conductive path in $G$ joining $v$ and $u_i$. We show that $H = P_1 \cup \dots \cup P_{d-1}$ is an induced subgraph of $G$ which is $(3, \epsilon, \alpha)$-reducible.

    First, we claim that $H$ is an induced subgraph. Indeed, suppose that there exist two adjacent vertices $p_i \in V(P_i) \setminus \{v\}$ and $p_j \in V(P_j) \setminus \{v\}$, for $i \neq j$. Then, $u_i$ and $u_j$ are conductively connected, contradicting Lemma \ref{lem:conductive_path}.

    Now, we observe that for each vertex $w \in V(H)$, $2 \leq \ell_H(w) \leq 3$. Therefore, 
    by Lemma \ref{lem:path-is-flex},
    each path $P_i$ is $(\ell_H,3,\frac{1}{3})$-reductive.
    Hence, by Lemma \ref{lem:cut}, $H$ is $(\ell_H,3,\frac{1}{9})$-reductive.
    Therefore, 
    $H$ is $(3,\epsilon,\alpha)$-reducible, a contradiction.
\end{proof}

\begin{lemma}
\label{lem:5}
    If $v \in V(G)$ satisfies $\deg(v) = 5$,
    then $v$ is conductively connected with neither of the following:
    \begin{enumerate}
        \item Three cheap vertices and one expensive vertex;
        \item One cheap vertex and two expensive vertices.
    \end{enumerate}
\end{lemma}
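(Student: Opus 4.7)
The plan is to prove both parts of the lemma by contradicting Assumption~\ref{a}. In each case, I would construct an induced subgraph $H$ of $G$ by combining the terminal block(s) hosting the expensive vertex/vertices with the shortest conductive paths from $v$ to the cheap vertices, and then apply Lemmas~\ref{lem:max3}, \ref{lem:path-is-flex}, and~\ref{lem:cut} to show that $H$ is $(\ell_H, 3, 3^{-9})$-reductive. This would force $H$ to be $(3, \epsilon, \alpha)$-reducible, contradicting Assumption~\ref{a}.

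For case~(1), suppose $v$ has degree $5$ and is conductively connected to three cheap vertices $u_1, u_2, u_3$ and an expensive vertex $u_e$. By Lemma~\ref{lem:two-neighbors}, $v$ and $u_e$ belong to a common terminal block $B$ whose vertices in $V(B)\setminus\{v\}$ all have degree at most $3$ in $G$. For each $i \in \{1,2,3\}$, let $P_i$ be the shortest conductive path in $G$ joining $v$ to $u_i$. Lemma~\ref{lem:conductive_path} forces the three starting edges of $P_1, P_2, P_3$ at $v$ to be pairwise distinct (otherwise a pair of cheap vertices would be conductively connected via a shortcut past $v$), and none of these edges can lie in $B$ (else $P_i$ would be trapped inside $B$, since the degree-$3$ vertices of $B$ have all their edges in $B$, but $u_i \notin V(B)$). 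Combined with $\deg_B(v) \geq 2$ from the $2$-connectivity of $B$, the count $\deg_G(v) = 5$ forces $\deg_B(v) = 2$, and hence $\deg_H(v) = 5$, where $H := B \cup P_1 \cup P_2 \cup P_3$. The crux is then verifying that $H$ is an induced subgraph of $G$: any hypothetical chord between two of the four pieces would produce a conductive path between two distinct degree-$2$ vertices chosen from $\{u_1, u_2, u_3, u_e\}$, contradicting Lemma~\ref{lem:conductive_path}. A direct computation shows that $\ell_H \equiv 3$ on $V(B)$ and $\ell_H \in \{2,3\}$ on each $V(P_i)$. Lemma~\ref{lem:terminal_diamond} rules out $B \cong K_4^-, K_4, H_5, H_7$, so by Lemma~\ref{lem:max3} (with $X = \emptyset$) $B$ is $(\ell_H, 3, 3^{-8})$-reductive, and by Lemma~\ref{lem:path-is-flex} each $P_i$ is $(\ell_H, 3, \tfrac{1}{3})$-reductive. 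Taking $H^* = \{v\}$, which is trivially $(\ell_H, 3, \tfrac{1}{3})$-reductive since $\ell_H(v) = 3$, Lemma~\ref{lem:cut} yields that $H$ is $(\ell_H, 3, 3^{-9})$-reductive, contradicting Assumption~\ref{a}.

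For case~(2), suppose instead that $v$ is conductively connected to two expensive vertices $u_e, u_e'$ and one cheap vertex $u_c$. Lemma~\ref{lem:two-neighbors} produces terminal blocks $B_e$ and $B_{e'}$ containing $\{v,u_e\}$ and $\{v,u_e'\}$ respectively; these blocks are distinct because distinct blocks of $G$ meet in at most one vertex. Let $P_1$ be the shortest conductive path from $v$ to $u_c$. The same degree-count argument gives $\deg_{B_e}(v) = \deg_{B_{e'}}(v) = 2$, and I would set $H := B_e \cup B_{e'} \cup P_1$. As in case~(1), any chord between the pieces creates a conductive path between two distinct degree-$2$ vertices in $\{u_e, u_e', u_c\}$, so $H$ is induced. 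The reductivity calculations for the blocks and the path are identical to those in case~(1), and Lemma~\ref{lem:cut} applied with $H^* = \{v\}$ and pieces $H_1 = B_e$, $H_2 = B_{e'}$, $H_3 = P_1$ again shows that $H$ is $(\ell_H, 3, 3^{-9})$-reductive, contradicting Assumption~\ref{a}.

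The main obstacles are twofold: first, establishing that $H$ is an induced subgraph, which hinges on a careful case analysis of possible chords, with each case ruled out via Lemma~\ref{lem:conductive_path}; and second, making the degree bookkeeping at $v$ precise enough to force $\deg_B(v) = 2$ (or $\deg_{B_e}(v) = \deg_{B_{e'}}(v) = 2$), which is exactly what guarantees $\ell_H(v) = 3$ and allows $H^* = \{v\}$ to serve as the gluing piece with the generous reductivity constant $\tfrac{1}{3}$. Once these structural facts are in hand, the reductivity conclusion follows mechanically from the modular framework assembled in Section~\ref{sec:reducible}.
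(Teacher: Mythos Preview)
Your proof is correct and follows essentially the same approach as the paper's: build $H$ from the terminal block(s) and the shortest conductive paths, verify it is induced via Lemma~\ref{lem:conductive_path}, compute $\ell_H$, and feed the pieces into Lemmas~\ref{lem:max3}, \ref{lem:path-is-flex}, and~\ref{lem:cut}. Your write-up is in fact more careful than the paper's about the degree count forcing $\deg_B(v)=2$ and hence $\ell_H(v)=3$; the only slip is in case~(2), where ``distinct blocks meet in at most one vertex'' does not by itself show $B_e\neq B_{e'}$---the clean reason is that Lemma~\ref{lem:two-neighbors} forces every vertex of $B_e\setminus\{u_e,v\}$ to have degree~$3$, so a second degree-$2$ vertex $u_e'$ cannot also lie in $B_e$.
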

\begin{proof}
\begin{enumerate}
    \item  First, suppose that $v$ is conductively connected with three cheap vertices $u_1, u_2, u_3$ and an expensive vertex $u^*$.
For each cheap vertex $u_i$, let $P_i$ be the shortest conductive path joining $v$ and $u_i$.
By Lemma \ref{lem:two-neighbors}, $G$ has a terminal block $B$ containing $u^*$ and $v$
such that each vertex $b \in V(B) \setminus \{v\}$ has degree at most $3$.
We claim that $H = P_1 \cup P_2 \cup P_3 \cup B$ is an induced subgraph of $G$ which is $(3,\epsilon,\alpha)$-reducible.

First, we argue that $H$ is an induced subgraph of $G$.
By the same argument used in Lemma \ref{lem:dminus2}, 
$P_1 \cup P_2 \cup P_3$ is an induced subgraph of $G$, as otherwise, two cheap vertices are conductively connected, contradicting Lemma \ref{lem:conductive_path}.
Since $B$ is a terminal block of $G$ with a cut-vertex $v$, $H$ is therefore an induced subgraph of $G$. 

Next, we observe that 
since $B$ is a block, $v$ has two neighbors in $B$. Hence, $\ell_H(v) = 3$, and so it follows from Lemma \ref{lem:path-is-flex} that
each path $P_i$ is 
$(\ell_H,3,\frac{1}{3})$-reductive.
 Furthermore,
$\ell_H(b) = 3$ for all $b \in V(B)$, so
since $B$ is not a diamond, $K_4$, $H_5$, or $H_7$ by Lemma \ref{lem:terminal_diamond},
it follows from Lemma \ref{lem:max3} that $B$ is $(\ell_H,3,3^{-8})$-reductive.
Hence, by Lemma \ref{lem:cut},
$H$ is 
$(\ell_H,3,3^{-9})$-reductive and hence $(3,\epsilon, \alpha)$-reducible, a contradiction.

\item  Next, suppose that $v$ is conductively connected with one cheap vertex $u$ and two expensive vertices $u^*_1$ and $u^*_2$. 
We let $P$ be the shortest conductive path joining $v$ and $u$, and for each $u^*_i$, we let $B_i$ be the block of $G$ containing $u_i^*$ and $v$. Then, we may follow a similar argument as in the previous case to show that $H = P \cup B_1 \cup B_2$ is an induced subgraph of $G$ which is 
$(\ell_H,3,3^{-9})$-reductive
and hence $(3, \epsilon, \alpha)$-reducible, a contradiction.
\end{enumerate}
\end{proof}
\begin{lemma}
\label{lem:st}
    If $v$ is a vertex satisfying $\deg(v) \geq 6$ that is conductively connected with $s$ expensive vertex and $t$ cheap vertices, then 
    $2s + t \leq \deg(v).$
\end{lemma}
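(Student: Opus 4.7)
The plan is to establish this bound by a direct edge-counting argument at $v$, leveraging the structural lemmas already proved under Assumption~\ref{a}. In contrast to the preceding lemmas in this section, no new reducible subgraph needs to be constructed; the inequality will follow because the blocks and conductive paths naturally attached to $v$ contribute pairwise-disjoint edges at $v$.

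For each expensive vertex $u_j^*$ conductively connected with $v$, Lemma~\ref{lem:two-neighbors} provides a terminal block $B_j$ of $G$ containing both $v$ and $u_j^*$ such that every other vertex of $B_j$ has degree $3$ in $G$. Since $u_j^*$ has degree $2$ in $G$ and is not a cut vertex (the cut vertex of $B_j$ is $v$), both its edges lie in $B_j$, so $B_j$ has at least three vertices, is $2$-connected, and hence $\deg_{B_j}(v) \geq 2$. For each cheap vertex $u_i$ conductively connected with $v$, I will fix a conductive path $P_i$ from $v$ to $u_i$ in $G$, which contributes at least one edge at $v$.

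Next, I would verify three disjointness claims. First, distinct blocks $B_j, B_{j'}$ share only the vertex $v$, since distinct terminal blocks of $G$ meet only at cut vertices and their only common member is $v$. Second, distinct paths $P_i, P_{i'}$ share only $v$: otherwise, concatenating the subpath of $P_i$ from $u_i$ to a common vertex $w \neq v$ with the subpath of $P_{i'}$ from $w$ to $u_{i'}$ produces a walk between two distinct degree-$2$ vertices whose internal vertices all have degree $3$, and passing to a simple subpath contradicts Lemma~\ref{lem:conductive_path}. Third, $P_i$ and $B_j$ share only $v$; any common $w \neq v$ would be an internal vertex of $P_i$ with degree $3$, and Lemma~\ref{lem:two-neighbors} forces all edges of $w$ to lie in $B_j$. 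Propagating this forces $P_i \subseteq B_j$, so $u_i \in V(B_j)$ is a degree-$2$ vertex, necessarily equal to $u_j^*$; this contradicts $u_i$ being cheap and $u_j^*$ expensive.

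Combining these facts, the edge sets of $B_1, \ldots, B_s, P_1, \ldots, P_t$ at $v$ are pairwise disjoint, and the total number of edges of $G$ incident to $v$ used across them is at least $2s + t$. Hence $\deg_G(v) \geq 2s + t$, as desired. The main subtlety I anticipate is carefully justifying the second disjointness claim: ensuring that the concatenated walk between two cheap vertices does indeed reduce to a nontrivial simple conductive path so that Lemma~\ref{lem:conductive_path} applies. The other two disjointness facts are essentially the same propagation argument used in Lemma~\ref{lem:dminus2} and Lemma~\ref{lem:exp-cheap}.
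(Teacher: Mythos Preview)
Your proof is correct and follows essentially the same edge-counting strategy at $v$ as the paper, relying on Lemma~\ref{lem:two-neighbors} and Lemma~\ref{lem:conductive_path} in the same way. The only cosmetic difference is that the paper invokes Menger's theorem to extract two edge-disjoint conductive paths from each block $B_j$ and then applies pigeonhole uniformly to all $2s+t$ paths, whereas you count the two block edges at $v$ directly and handle the three disjointness cases separately; both routes collapse to the same contradiction with Lemma~\ref{lem:conductive_path}.
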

\begin{proof}
    For each cheap vertex $u$ with which $v$ is conductively connected, we let $P_u$ be a conductive path in $G$ joining $u$ and $v$. 
    By Lemma \ref{lem:two-neighbors},
    if $u^*$ is an expensive vertex with which $v$ is conductively connected, then 
    $G$ has a terminal block $B$ containing both $u^*$ and $v$ for which every vertex in $B \setminus \{u^*,v\}$ is conductive.
    Using Menger's theorem,
    we define
    two edge-disjoint conductive paths $P_{u^*}$ and $P_{u^*}'$ 
    joining $u^*$ and $v$. 

    Now, if $2s + t > \deg(v)$, then by the pigeonhole principle, 
    there exist two paths $P_1$ and $P_2$ defined in the previous step and a single neighbor $w \in N(v)$ such that $vw \in E(P_1) \cap E(P_2)$.
    We let $u_1$ and $u_2$ be the endpoints of $P_1$ and $P_2$ apart from $v$, respectively.
    It cannot hold that $u_1 = u_2$,
    since this would imply that $u_1 = u_2$ is expensive,
        and $P_{u_1}$ and $P'_{u_1}$ are edge-disjoint by construction.
    Hence, $u_1$ and $u_2$ are distinct  vertices of degree $2$ in $G$ which are conductively connected, contradicting Lemma \ref{lem:conductive_path}.
\end{proof}

\subsection{Discharging}
Now, we use discharging to finish our proof of Theorem \ref{thm:mad3}.
Let each vertex $v \in V(G)$ receive a charge of $\deg(v) - 3$.
Since the average degree of $G$ is less than $3$, the total charge distributed throughout $G$ is negative.
We apply the following steps. 
\begin{enumerate}
    \item If $u$ is a cheap vertex, $u$ takes a charge of $\frac{1}{2}$ from each vertex of degree at least $4$ with which $u$ is conductively connected.
    \item If $u$ is an expensive vertex, then $u$ takes a charge of $1$ from the sole vertex of degree at least $4$ with which $u$ is conductively connected.
\end{enumerate}
Since each step conserves the total charge in $G$, after applying these steps, the total charge in $G$ is still negative. We show that after applying these steps,
each vertex in $G$ has a nonnegative charge, which gives us a contradiction.

Consider a vertex $v \in V(G)$. 
If $\deg(v) = 2$, then $v$ is either cheap or expensive. If $v$ is cheap, then $v$ by definition is conductively connected with at least two vertices of degree at least $4$. Hence, $v$ gains a charge of at least $2 \cdot \frac{1}{2} = 1$ during discharging, and the final charge of $v$ is at least $2 - 3 + 1 = 0$. If $v$ is expensive, then $v$ by definition is conductively connected with exactly one vertex of degree at least $4$. Hence, $v$ gains a charge of exactly $1$ during discharging, and the final charge of $v$ is $2 - 3 + 1 = 0$.

If $\deg(v) = 3$, 
then $v$ 
does not gain or lose any charge during discharging, so the final charge of $v$ is $3 - 3 = 0$.

If $\deg(v) = 4$,
then
$v$ gives away a charge of $1$ to each expensive vertex with which $v$ is conductively connected
and a charge of $\frac{1}{2}$ to each cheap vertex with which $v$ is conductively connected.
If $v$ is conductively connected with an expensive vertex $u$, then Lemmas \ref{lem:1exp} and \ref{lem:exp-cheap} imply that $u$ is the only vertex of degree $2$ with which $v$ is conductively connected. Therefore, $v$ gives away at most $1$ charge.
If $v$ is not conductively connected with an expensive vertex, then Lemma \ref{lem:dminus2} tells us $v$ is conductively connected with at most two cheap vertices, and hence $v$ gives away at most $1$ charge. In both cases, the final charge of $v$ is at least $4 - 3 - 1 = 0$.

If $\deg(v) = 5$,
then $v$ gives away a charge of $\frac{1}{2}$ to each cheap vertex conductively connected with $v$ and a charge of $1$ to each expensive vertex conductively connected with $v$. 
Lemmas \ref{lem:dminus2} and \ref{lem:5} tells us that 
if $v$ is conductively connected with $s$ expensive vertices and $t$ cheap vertices, then $s + \frac{1}{2}t \leq 2$. Hence, $v$ gives away at most $2$ charge, and the final charge of $v$ is at least $5 - 3 - 2 = 0$.

If $\deg(v) \geq 6$, then
$v$ gives away a charge of $\frac{1}{2}$ to each cheap vertex conductively connected with $v$ and a charge of $1$ to each expensive vertex conductively connected with $v$. 
Lemma \ref{lem:st} tells us that if $v$ is conductively connected with $s$ expensive vertices and $t$ cheap vertices, then $s + \frac{1}{2}t \leq \frac{1}{2}\deg(v)$. Therefore, the final charge of $v$ is
at least $\deg(v) - 3 - \frac{1}{2}\deg(v) \geq 0$.

Before discharging, the total charge distributed throughout $G$ is negative, but after discharging, the total charge distributed throughout $G$ is nonnegative. Since each step of discharging conserves total charge, this gives us a contradiction.
Therefore, we conclude that Assumption \ref{a} is incorrect and that $G$ contains an induced $(3,\epsilon,\alpha)$-reducible subgraph.
Furthermore, since $G$ is an arbitrary graph satisfying $\mad(G) < 3$, the same argument implies that every induced subgraph of $G$ contains an induced $(3,\epsilon,\alpha)$-reducible subgraph.
Thus, 
Lemma \ref{lem:main-k-red}
shows that $G$ is weighted $\epsilon$-flexibly $3$-choosable.
This completes the proof of Theorem \ref{thm:mad3}.

\section{Conclusion}
The condition that $\mad(G) < 3$ in
Theorem \ref{thm:mad3} is best possible, as $K_4$ has maximum average degree exactly $3$ and is not $3$-choosable. However, if we restrict our attention to graphs with no $K_4$ subgraph, then the question of whether the condition $\mad(G) < 3$ can be relaxed is open, and the correct answer is not clear. Recently,
the second author, along with Choi, Kostochka, and Xu \cite{k=4}, showed that every graph $G$ with maximum average degree at most $\frac{16}{5} = 3+\frac 15$ and no $4$-Ore subgraph on at most $10$ vertices (see \cite{Ore} for a definition) is $3$-choosable. In particular, every $K_4$-free graph $G$ with maximum average degree less than $\frac{22}{7} = 3+\frac{1}{7}$ is $3$-choosable. With this result in mind, we pose the following question.
\begin{question}
    What is the maximum value $d$ 
    for which there exists $\epsilon > 0$ 
    such that every $K_4$-free graph with maximum average degree less than $d$ is $\epsilon$-flexibly $3$-choosable?
\end{question}

\bibliographystyle{plain}
\bibliography{bib}

\end{document}